\numberwithin{equation}{section}
\newtheorem{theorem}{Theorem}[section]
\newtheorem{remark}[theorem]{Remark}
\newtheorem{lemma}[theorem]{Lemma}
\newtheorem{proposition}[theorem]{Proposition}
\newtheorem{corollary}[theorem]{Corollary}
\newtheorem{res}{Result}
\newtheorem{theo}[res]{Theorem}
\newtheorem{prop}[res]{Proposition}
\newcommand{\C}{\mathbf{C}}
\newcommand{\D}{\mathbf{D}}
\newcommand{\E}{\mathbf{E}}
\newcommand{\HH}{\mathbf{H}}
\newcommand{\h}{\mathbf{H}}
\newcommand{\p}{\mathbf{P}}
\newcommand{\R}{\mathbf{R}}
\newcommand{\CA}{\mathcal {A}}
\newcommand{\CB}{\mathcal {B}}
\newcommand{\CD}{\mathcal {D}}
\newcommand{\CF}{\mathcal {F}}
\newcommand{\CT}{\mathcal {T}}
\newcommand{\CV}{\mathcal {V}}
\newcommand{\CW}{\mathcal {W}}
\newcommand{\CG}{\mathcal {G}}
\newcommand{\SLE}{{\rm SLE}}
\newcommand{\CLE}{{\rm CLE}}
\newcommand{\QLE}{{\rm QLE}}
\newcommand{\re}{\mathrm{Re}}
\newcommand{\one}{{\bf 1}}
\newcommand{\wt}{\widetilde}
\newcommand{\wh}{\widehat}
\newcommand{\giv}{\,|\,}
\newcommand{\eps}{\epsilon}
\newcommand{\BCLE}{\mathrm{BCLE}}
\newcommand{\IG}{{\mathrm{IG}}}
\newcommand{\strip}{{\mathscr S}}
\newcommand{\jason}[1]{{#1}}
\newcommand{\jasonr}[1]{{#1}}
\newcommand{\ww}[1]{{#1}}
\begin{document}

\title[Simple CLEs on LQG]{Simple Conformal Loop Ensembles \\on Liouville Quantum Gravity}

\author{Jason Miller}
\address{Statslab, Center for Mathematical Sciences, University of Cambridge, Wilberforce Road, Cambridge CB3 0WB, UK}
\email {jpmiller@statslab.cam.ac.uk}

\author{Scott Sheffield}
\address{
Department of Mathematics, MIT,
77 Massachusetts Avenue,
Cambridge, MA 02139, USA}
\email{sheffield@math.mit.edu}

\author{Wendelin Werner}
\address{Department of Mathematics, ETH Z\"urich, R\"amistr. 101, 8092 Z\"urich, Switzerland} 
\email{wendelin.werner@math.ethz.ch}

\maketitle

\begin{abstract}
We show that when one draws a simple conformal loop ensemble ($\CLE_\kappa$ for $\kappa \in (8/3,4)$) on an independent $\sqrt{\kappa}$-Liouville quantum gravity (LQG) surface and explores the CLE in a natural Markovian way, the quantum surfaces (e.g., corresponding to the interior of the CLE loops) that are cut out form a Poisson point process of quantum disks.  This construction allows us to make direct links between CLE on LQG, $(4/\kappa)$-stable processes, and labeled branching trees. The ratio between positive and negative jump intensities of these processes turns out to be $-\cos (4 \pi / \kappa)$, which can be interpreted as a ``density'' of $\CLE$ loops in the $\CLE$ on LQG setting. Positive jumps correspond to the discovery of a CLE loop (where the LQG length of the loop is given by the jump size) and negative jumps correspond to the moments where the discovery process splits the remaining to be discovered domain into two pieces.

Some consequences of this result are the following:  (i) It provides a construction of a $\CLE$ on LQG as a patchwork/welding of quantum disks.  (ii) It allows \jasonr{us} to construct the ``natural quantum measure'' that lives in a $\CLE$ carpet.  (iii) It enables us to derive some new properties and formulas for $\SLE$ processes and $\CLE$ themselves (without LQG) such as the exact distribution of the trunk of the general $\SLE_\kappa(\kappa-6)$ processes.

The present work deals directly with structures in the continuum and makes no reference to discrete models, but our calculations match those for scaling limits of $O(N)$ models on planar maps with large faces and $\CLE$ on LQG.  Indeed, our L\'evy-tree descriptions are exactly the ones that appear in the study of the large-scale limit of peeling of discrete decorated planar maps such as in recent work of Bertoin, Budd, Curien and Kortchemski.

The case of non-simple CLEs on LQG \ww{is studied} in another paper.
\end{abstract}

\setcounter{tocdepth}{1}
\tableofcontents

\eject

\setcounter{tocdepth}{1}
\tableofcontents

\eject

\section{Introduction}
\label{sec:intro}

\subsection{Background} 
The present work \jasonr{gives} some new direct connections between conformal loop ensembles (CLE) and Liouville quantum gravity (LQG). Before describing our main results, let us give quick one-page surveys about each of the three main objects involved: CLE and their explorations;  LQG surfaces;  asymmetric stable processes and labeled trees.    

\subsubsection{Background on CLE and on CLE explorations}

The {\em Schramm-Loewner evolutions} ($\SLE_{\kappa}$) were introduced by Schramm in \cite{S0} and are individual random curves joining two boundary points of a simply connected domain.  They are defined as an infinitely divisible iteration of independent random conformal maps and they are classified by a positive parameter $\kappa$. The $\SLE_\kappa$ curves turn out to be simple when $\kappa \le 4$, and they have double points as soon as $\kappa > 4$ \cite{RS05}.  The {\em conformal loop ensembles} $\CLE_\kappa$ \cite{SHE_CLE,SHE_WER_CLE} are random families of non-crossing loops in a simply connected domain $D$. In a $\CLE_\kappa$, the loops are $\SLE_\kappa$-type curves (for the same value of $\kappa$). While $\SLE_\kappa$ corresponds to the conjectural scaling limit of a single interface in a statistical physics model in a domain with some special boundary conditions involving two marked boundary points, $\CLE_\kappa$ is the conjectural scaling limit of the whole collection of interfaces with some uniform boundary conditions. It turns out that the $\CLE_\kappa$ can be defined only in the regime where $\kappa \in (8/3, 8)$.  The phase transition at $\kappa =4$ for $\SLE_\kappa$ curves is mirrored by the properties of the corresponding $\CLE_\kappa$: When $\kappa \in (8/3, 4]$, which is the case that we will focus on in the present paper,  the $\CLE_\kappa$ loops are all disjoint and simple. They conjecturally correspond to the scaling limit of dilute $O(N)$ models for $N \in (0,2]$ (this is actually proved in the special case $N=1$ which is the critical Ising model \cite{s2010ising,ks2016fkconvergence,gw2019fk,BEN_HONG}). The set of points that are surrounded by no $\CLE_\kappa$ loop is called the $\CLE_\kappa$ carpet.  As shown in \cite{SHE_WER_CLE}, these simple CLEs can be constructed in different ways, including via the so-called Brownian loop-soups. However, in the present work, we will use mostly the original SLE branching tree construction proposed in \cite{SHE_CLE}, and further studied and described in \cite{SHE_WER_CLE,ww2013conformally,cle_percolations}. 

Let us give a brief intuitive description of some relevant results about the loop-trunk decomposition of (the totally asymmetric) branches of the $\SLE_\kappa$ branching tree  
(mostly from \cite{cle_percolations}) that will play an important role in the present paper: 
Suppose that $\kappa \in (8/3, 4)$, that one is given a $\CLE_\kappa$ in a domain $D$,
and that one chooses two boundary points $x$ and $y$. Then, it is possible to make sense of a random non-self-crossing curve (but with double points) from $x$ to $y$, that (i) stays in the CLE carpet, (ii) always leaves a CLE loop to its right if it hits it, (iii) possesses some conformal invariance and locality properties. These three properties in fact characterize the law of the curve, so that it can be interpreted as the critical percolation interface from $x$ to $y$ in the $\CLE_\kappa$ carpet (loosely speaking, it traces the outer boundary of percolation clusters in this carpet that touch the clockwise part of the boundary from $x$ to $y$). This process is called the CPI (conformal percolation interface) in the $\CLE_\kappa$ carpet. 

\begin{figure}[ht!]
\includegraphics[scale=.55]{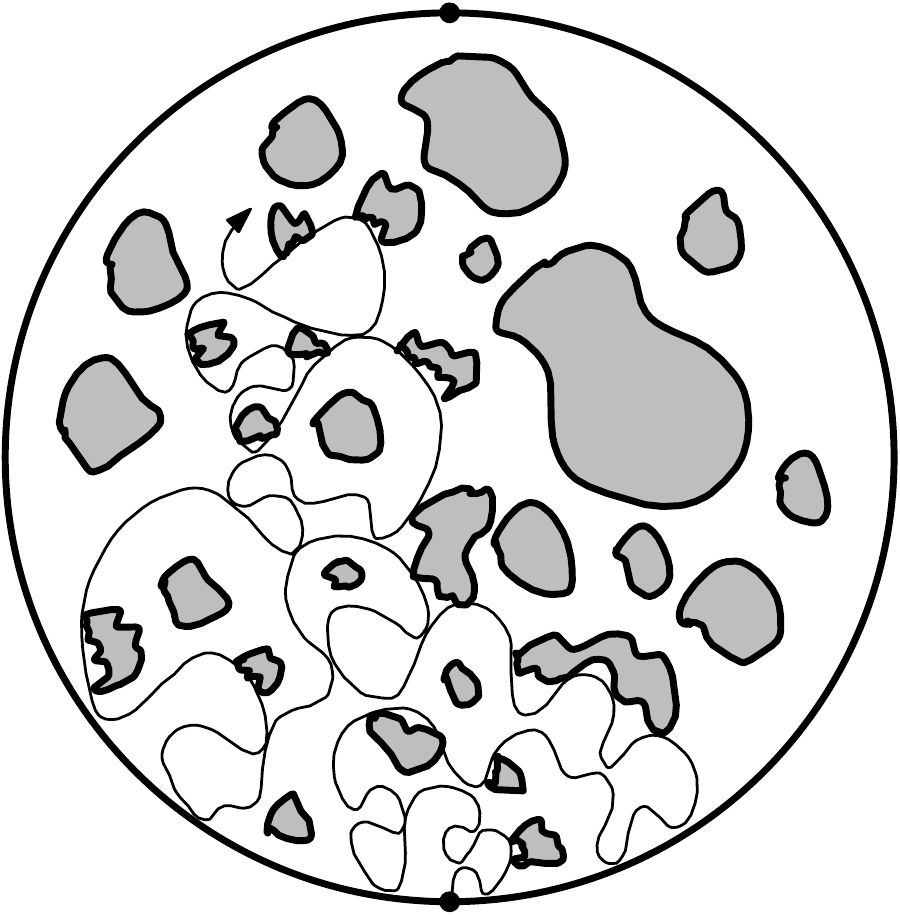} 
\includegraphics[scale=.55]{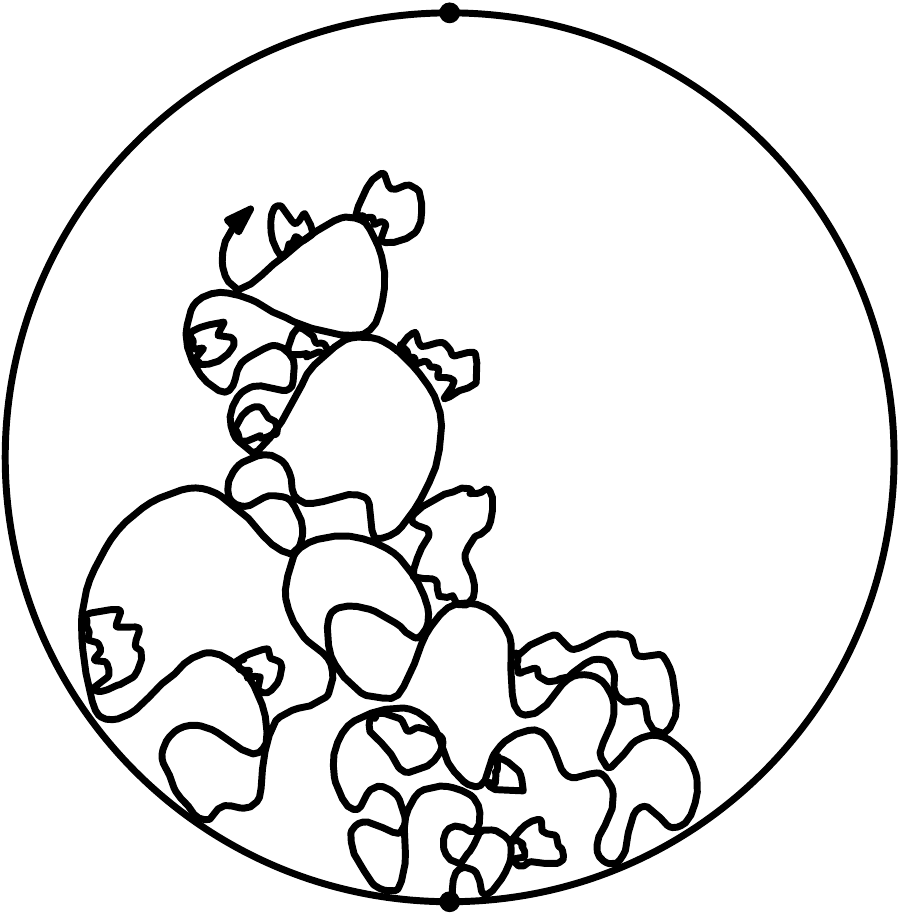} 
\includegraphics[scale=.55]{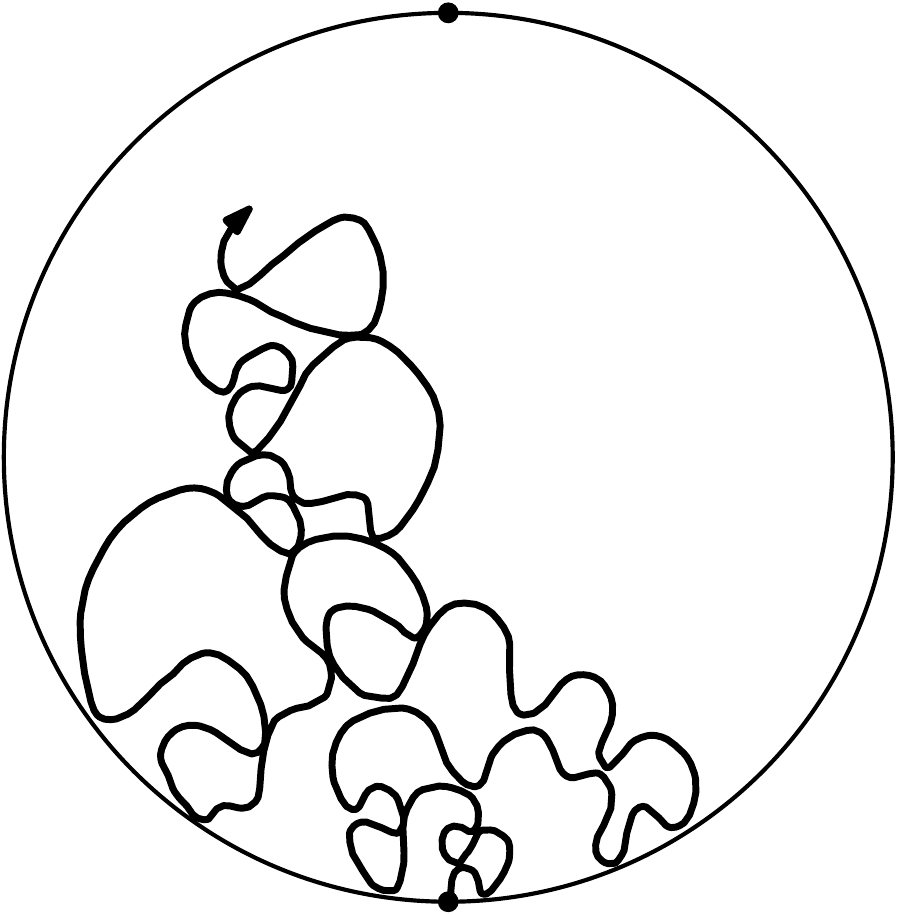} 
	\caption{CPI in a CLE, the curve $\eta$ and its trunk}
	\label{picCPI}
\end{figure}

The ``quenched'' law (i.e., averaged over all possible $\CLE_\kappa$) of the CPI turns out to be the natural target-invariant version of $\SLE_{\kappa'}$ for $\kappa' = 16 / \kappa$, called \jasonr{an} $\SLE_{\kappa'} ( \kappa' - 6)$ process.
In the same way as for ordinary percolation, it is actually possible to make sense of the whole branching tree of CPIs targeting all the points in the domain.

As one can expect from the properties of CLEs, the law of the ordered family of CLE loops that a CPI branch encounters can be viewed as coming from a Poisson point process of $\SLE_\kappa$ type bubbles. The process that one obtains by tracing the CPI and the CLE loops along the way in the order in which they are encountered (this process is called the $\SLE_\kappa(\kappa-6)$ process), can be reconstructed from the \jasonr{ordered} collection of $\SLE_\kappa$ bubbles \jasonr{(each marked by the first point visited by the trunk)}. 
The CPI is then called the {\em trunk} of this $\SLE_\kappa (\kappa -6)$ -- see Figure~\ref{picCPI}. In \cite[Theorem~7.4]{cle_percolations}, it is explained how to construct also this process by first sampling the entire 
CPI (which is an $\SLE_{\kappa'}( \kappa' -6)$ as mentioned above), and then only to attach the collection of discovered CLE loops to it -- this is referred to as the {\em loop-trunk} decomposition of the (totally asymmetric) $\SLE_\kappa (\kappa -6)$.

\subsubsection{Quantum surfaces and quantum disks} 

Let us first briefly recall that there are two essentially equivalent approaches to LQG surfaces -- one where one views it as a random metric and one where one views it in terms of a random area measure (or random lengths of some particular random curves). In the present paper, we will always stick to the latter one (that we now briefly discuss) and will not need to know about the former -- we can nevertheless mention that it has been recently shown that the random area measure defines also a random distance
(see \cite{dddf2019tightness,gm2019exunique} and the reference therein).

Suppose that one is given a simply connected domain $D$ in the complex plane (with $D \not= \C$) and an instance of the Gaussian free field (GFF) with Dirichlet boundary conditions on $D$, to which one adds some (possibly random and unbounded) harmonic function $h_0$, and let $h$ denote the obtained field (this includes for instance the case where $h$ is a GFF with ``free boundary conditions'').  It is by now a classical fact that can be traced back at least to work by H{\o}egh-Krohn \cite{hk1971quantum} and Kahane \cite{kahane1985gmc} that when $\gamma \in (0, 2)$, it is possible to define an {\em LQG area measure} $\mu_h= \mu_{h, \gamma}$ that can loosely be viewed as having a density $\exp (\gamma h)$ with respect to Lebesgue measure (one precise definition goes via an approximation/renormalization procedure, see \cite{DS08}). An important feature to emphasize is that the obtained area measure is conformally covariant, in the sense that the image of $\mu_{h, \gamma}$ under a conformal map $\Phi$ from $D$ onto $\wt D$ will be the measure $\mu_{\wt h, \gamma}$, where $\wt h$ is the sum of a Dirichlet GFF in $\wt D$ with the harmonic function $\wt{h}_0 = h_0 \circ \Phi^{-1} + Q \log |(\Phi^{-1})' |$, with $Q = (2 / \gamma + \gamma /2 )$.  This conformal covariance was \jasonr{proved} to hold a.s.\ for a \emph{fixed} conformal map in \cite{DS08} and to hold a.s.\ simultaneously for \emph{all} conformal maps in \cite{sw2016measure}.

In the case that $h_0$ is chosen so that the obtained field $h$ is absolutely continuous with respect to a realization of the GFF with free boundary conditions, a similar procedure can be used to define a measure on the boundary of $D$, that is referred to as the  {\em quantum boundary length measure $\nu_h = \nu_{h, \gamma}$}.  This boundary length measure is in fact a deterministic function of the area measure $\mu_h$ (it is in some sense its ``trace on the boundary'' and it is  a deterministic function of the random function $h_0$).  Again, this boundary length measure turns out to be conformally covariant in the same way the LQG area measure is.  This leads to the definition of a \emph{quantum surface}, which is an equivalence class of distributions where distributions $h$, $\wt{h}$ are equivalent if there exists a conformal transformation $\Phi$ so that $h = \wt{h} \circ \Phi + Q\log|\Phi'|$.  When one speaks of a quantum surface, it therefore means a distribution modulo this change of coordinate rule.  A representative from the equivalence class is an \emph{embedding} of a quantum surface.

 Another important feature, pioneered in \cite{SHE_WELD,dms2014mating} and extensively used in all the subsequent LQG/SLE papers is that when $\kappa = \gamma^2$ and $\kappa' = 16/ \kappa$ (we will keep these relations throughout the paper), then when one draws $\SLE_\kappa$-type curves in $D$ or $\SLE_{\kappa'}$-type curves in $D$ that are independent of the field $h$, then it is possible to make sense (again in a conformally \jasonr{covariant} way) of their quantum length -- which therefore provides a way to parameterize the curve using the additional random input provided by $h$.  
 \ww{An important feature to keep in mind is that the definition of the quantum length for $\SLE_\kappa$ and of the quantum length for $\SLE_{\kappa'}$ associated to a same LQG surface are somewhat different (and give rise to different scaling rules) -- see Remark \ref {GQLRemark}.}

 It turns out that some special choices for the random harmonic function $h_0$ are particularly interesting. One of these choices gives rise to the so-called $\gamma$-quantum disks.  One property of quantum disks (for $\gamma \in (0,2)$) is that almost surely, the total area measure is finite and the total boundary length is finite.  It is actually convenient to work with either the probability measure $P_L$ on quantum disks with a given boundary length $L$, or with the infinite measure on quantum disks $M = \int P_L dL/L^{\alpha + 1}$, where here and in the sequel, $\alpha$ and $\gamma$ are related by $\alpha = 4 / \gamma^2 = 4 / \kappa$.  The measure $P_L$ can be obtained from $P_1$ by adding $({2}/{\gamma} ) \log L$ to the field, which has the effect of multiplying the boundary length measure by the factor $L$ and the area measure by the factor $L^2$. 
 
\subsubsection{Relevant L\'evy processes and fragmentation processes}

Recall that when $\alpha \in (0,2) \setminus \{1\}$, it is possible to define a real-valued stable process $X$ of index $\alpha$ with no negative jumps. For such a process $X$ started from $0$,
the processes $(X_{ct})_{t \ge 0}$ and $(c^{1 / \alpha} X_t)_{t \ge 0}$ have the same law for all $c >0$. \jason{The ordered collection of jumps of $X$ form a Poisson point process with intensity $dt dh / h^{1+\alpha}$ on $(0, \infty) \times (0, \infty)$, and the obtained process will make a jump $h_i$ at time $t_i$ for each $(t_i, h_i)$ in 
this point process}. The sum of all the small jumps accomplished before time $1$ say is infinite when $\alpha \in (1,2)$ (which is in fact the case that will be relevant to the present paper),
but the process is nevertheless well-defined as a deterministic function of the Poissonian collection of jumps via  L\'evy compensation (it is the limit as $\eps \to 0$ of the process obtained by summing 
all the jump of $X$ of size at least $\eps$ with the deterministic function $-c_\eps t$ for a well-chosen $c_\eps$ that goes to $\infty$ as $\eps \to 0$). 

By considering the linear combination $X:= a'X' - a'' X''$ of two independent such stable processes $X'$ and $X''$ with no negative jumps for $a' \ge 0$, $a'' \ge 0$, one then gets a general stable process. We will say that $u:=a'/a'' \in [0, \infty]$ is the ratio between the intensity of its positive/upward jumps and the intensity of its negative/downward jumps. When $u=1$, then $X$ is a symmetric stable process.  

In the context of fragmentation-type processes, it appears natural to consider variants of these stable processes that are tailored so that they remain positive at all times. 
The idea is that if the process is at $x$, then the rates at which it jumps to $x+h$ and $x-h$ will not be proportional to $1/h^{1+ \alpha}$ anymore, but will also depend on $x$. 
Let us illustrate this with the following example that will be relevant in the present paper. 
Consider $\alpha \in (1,2)$ and a general stable L\'evy process $X$ started from $x_0 >0 $ 
with index $\alpha$ and $u \in [0, \infty]$ as defined above. Recall that when the process $X$ is at $x$, the rate at which it jumps to $x+h$ and $x-h$ respectively does not depend on $x$ and is $a' / h^{1+\alpha}$ and $a''/h^{1+\alpha}$.  It is possible to define a variant $Y$ of $X$ started from some positive $y_0$, such that when the process is at $y$, the rate of jumps to $y+l$ and to $y-l$ respectively with the rates
\[ \frac {a'y^{\alpha+1}} {l^{\alpha+1} (y+l)^{\alpha+1}} \quad\text{and}\quad \frac {a''y^{\alpha+1}} {l^{\alpha+1} (y-l)^{\alpha+1}} \one_{l < y/2}. \]
Note that this tends to diminish the rate of the positive jumps and to favor the negative ones (of size smaller than $l/2$) when compared to $X$, but that for very small $|l|$, the rates of jumps of $Y$ are close to those of $X$. It turns out that it is possible to define such a processs $Y$, with the property that for all positive $r$ and $T$, if $E_{T,r}$ denotes the event that the process remains larger than $r$ up to time $T$, then the law of $(Y_t, t \le T)$ on the event $E_{T,r}$ is absolutely continuous with respect to that of $(X_t, t < T)$ on $E_{T, r}$ (actually, the proper way to define $Y$ is via its Radon-Nikodym derivative with respect to the law of of $X$ on each $E_{T, r}$, and then to check that it has the desired jump rates). This process $Y$ is then well-defined up to its first hitting time of $0$ (which can be infinite).  This process can have positive jumps of any size, but it can never more than halve itself during a jump.

The previous condition $l < y/2$ in the negative jumps of $Y$ comes from the fact that the process $Y$ is in fact naturally related to a fragmentation process that can be understood as follows: The negative jumps $y \mapsto y-l$ correspond to the splitting of particle of mass $y$ into two particles of masses $l$ and $y-l$ respectively. Then, the two particles will evolve independently. In this setup, the process $Y$ corresponds to the fact that at each such splitting within this random branching process $\CT$, one follows only the evolution of the largest of the two offspring. But, by using a countable collection of independent copies of $Y$, it is then also possible to define the evolutions of all the other offspring and their offspring -- and to define an entire fragmentation process $\CT$. This is a random labeled tree-like structure, of a type that has been subject of extensive studies, see e.g., \cite{Bertoin,bbck} and the references therein, and that is also sometimes referred to as multiplicative cascades, and related to branching random walks as initiated by Biggins \cite{Biggins}.

The positive jumps of $Y$ can just be kept as they are (the particle of mass $y$ becomes a particle of mass $y+l$) as in $\CT$ above, or alternatively also viewed as a splitting into the creation of two particles of masses $y+l$ and $l$ respectively that then evolve independently (and this therefore gives rise to a larger tree structure $\wt \CT$). 

It should be noted that this tree $\CT$ appears already in the asymptotic study of peeling processes on some planar maps, see \cite{ccm2017lengths,bbck} and the references therein.  

\subsection {Results of the present paper}

\subsubsection{CLEs on quantum disks}

We now begin to describe the results of the present paper about explorations of a $\CLE_\kappa$ drawn on an independent quantum disk. Here and throughout this introduction, we suppose that $\kappa \in (8/3, 4)$ and we define (and we will use these relations throughout this introduction)
\begin{equation}
\label{eqn:basic_relations}
\gamma := \sqrt {\kappa}, \quad \kappa' := \frac{16}{\kappa}, \quad\text{and}\quad \alpha := \frac{4}{\kappa}.	
\end{equation}

Consider a $\gamma$-quantum disk of boundary length $y_0$ parameterized by a simply connected domain $D$  -- and let $m$ be a boundary point, chosen uniformly according \jasonr{to} the LQG boundary length measure.  Consider on the other hand an independent $\CLE_\kappa$ in $D$.

One can then define a CLE exploration tree starting from $m$ (recall \cite{msw2016notdetermined} that tracing such a tree involves yet additional randomness as it loosely speaking amounts to exploring CPI percolation paths within the CLE carpet) as described above -- which gives rise to a $\CLE_\kappa$ exploration tree, or equivalently to the $\SLE_\kappa (\kappa -6)$ branching tree. One can recall that independent $\SLE_\kappa$ and $\SLE_{\kappa'}$-type curves drawn in a $\gamma$-quantum disk both have a quantum length (i.e., that can be interpreted as its length with respect to the LQG structure) \cite{SHE_WELD,dms2014mating}. So in particular: 

\begin{itemize}
\item Each CLE loop, which is an $\SLE_\kappa$ type loop will have its quantum length. 
\item Each CPI branch, which is an $\SLE_{\kappa'}$ type curve can be parameterized by a constant multiple of its quantum length, which is implicitly what we will do in the following paragraphs. 
\item The boundaries of the connected components of the complement of an $\SLE_\kappa (\kappa -6)$ curve at a given time, which are $\SLE_\kappa$-type curves will also be equipped with their quantum length measure. [In the sequel, we will then just refer to this quantum boundary length as the boundary length]. 
\end{itemize}

In particular, we see that:

\begin{itemize}
\item When the CPI hits a CLE loop for the first time, then if one attaches the whole CLE loop at once, one splits a domain with boundary length $l$ into two domains with boundary length $l+y$ and $y$, where $y$ denotes the quantum length of the CLE loop (the domain $O_l$ with boundary length $y$ would correspond to the inside of the discovered CLE loop), see Figure~\ref{split}.

\begin{figure}[ht!]
\includegraphics[scale=.5]{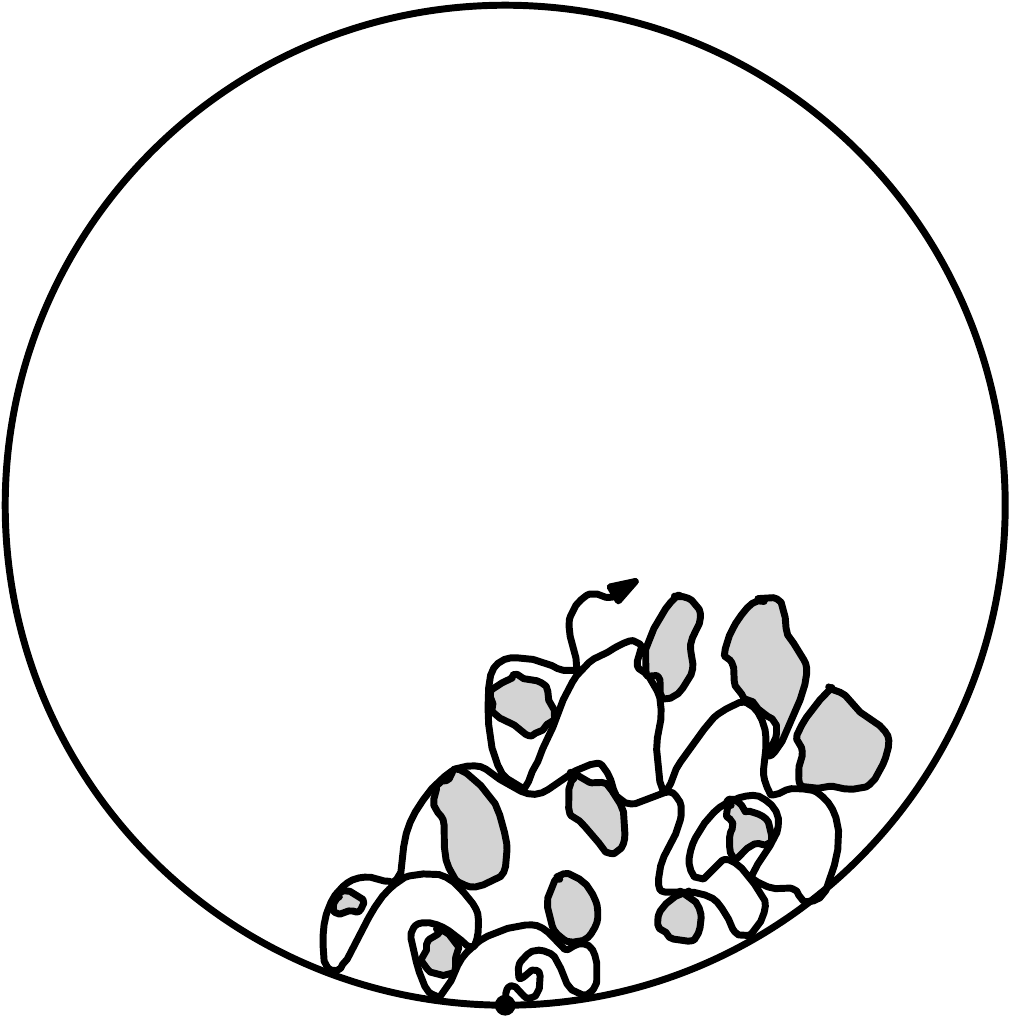} 
\includegraphics[scale=.5]{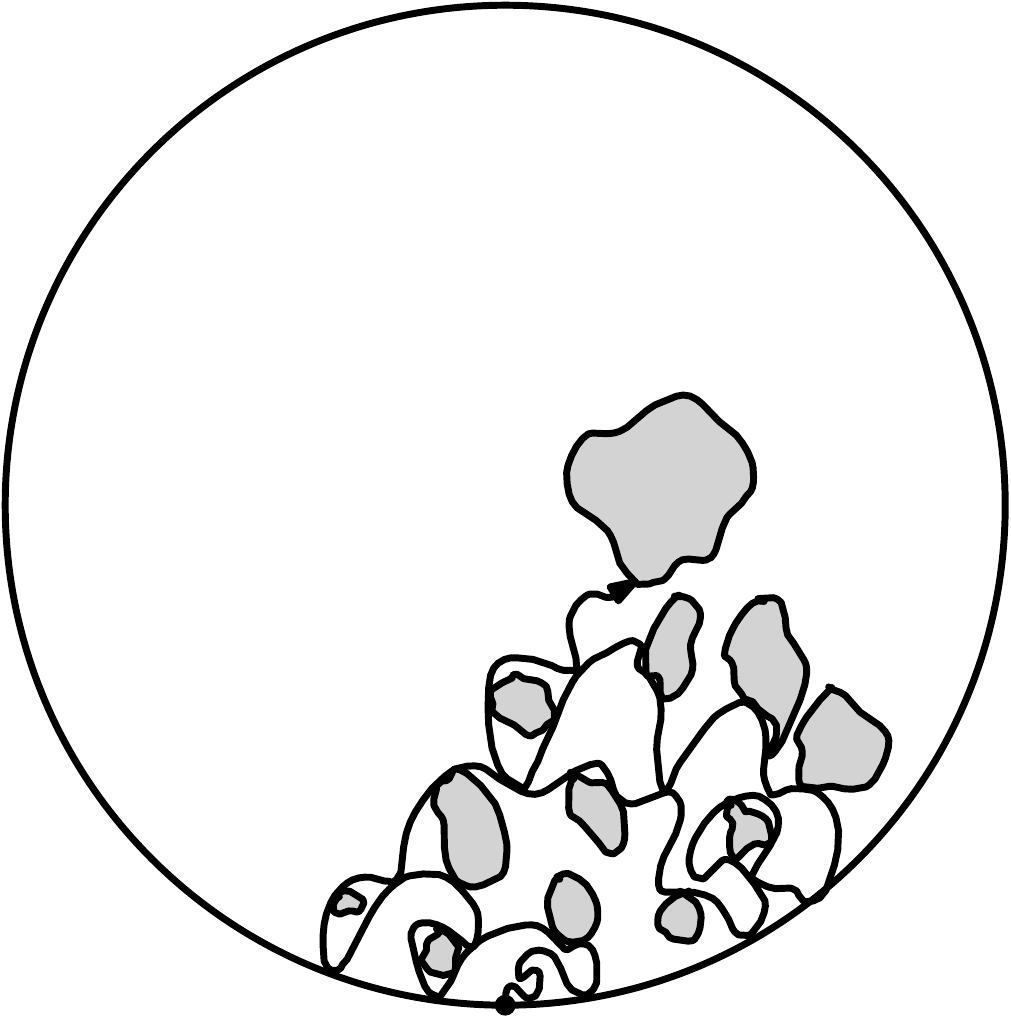} 
	\caption{When the trunk of the CPI from $-i$ to $i$ hits a CLE loop, the boundary length of the remaining to be explored domain makes a positive jump}
	\label{split}
\end{figure}

\item When the CPI (i.e., the $\SLE_{\kappa'}(\kappa'-6)$ process which is the trunk of the $\SLE_\kappa (\kappa -6)$) disconnects the remaining-to-be-explored domain into two pieces (this can happen for instance when it hits $\partial D$), then it splits the remaining-to-be explored domain of boundary length $y$ into two domains of boundary lengths $y_1$ and $y_2$ where $y_1 + y_2 = y$ (i.e. into one domain with boundary length $l$ and one with boundary length $y-l$, where $l < y/2$), see Figure~\ref{split2}.

\begin{figure}[ht!]
\includegraphics[scale=.5]{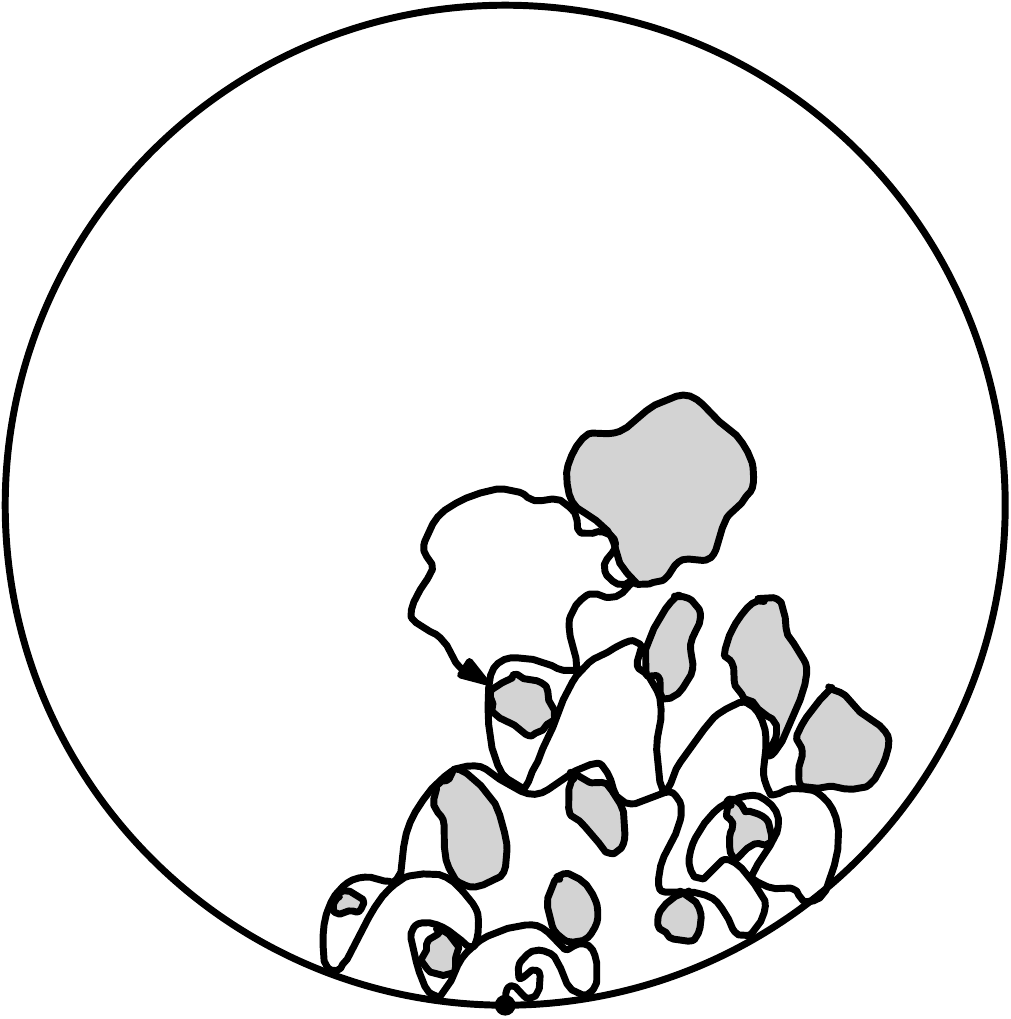} 
\includegraphics[scale=.5]{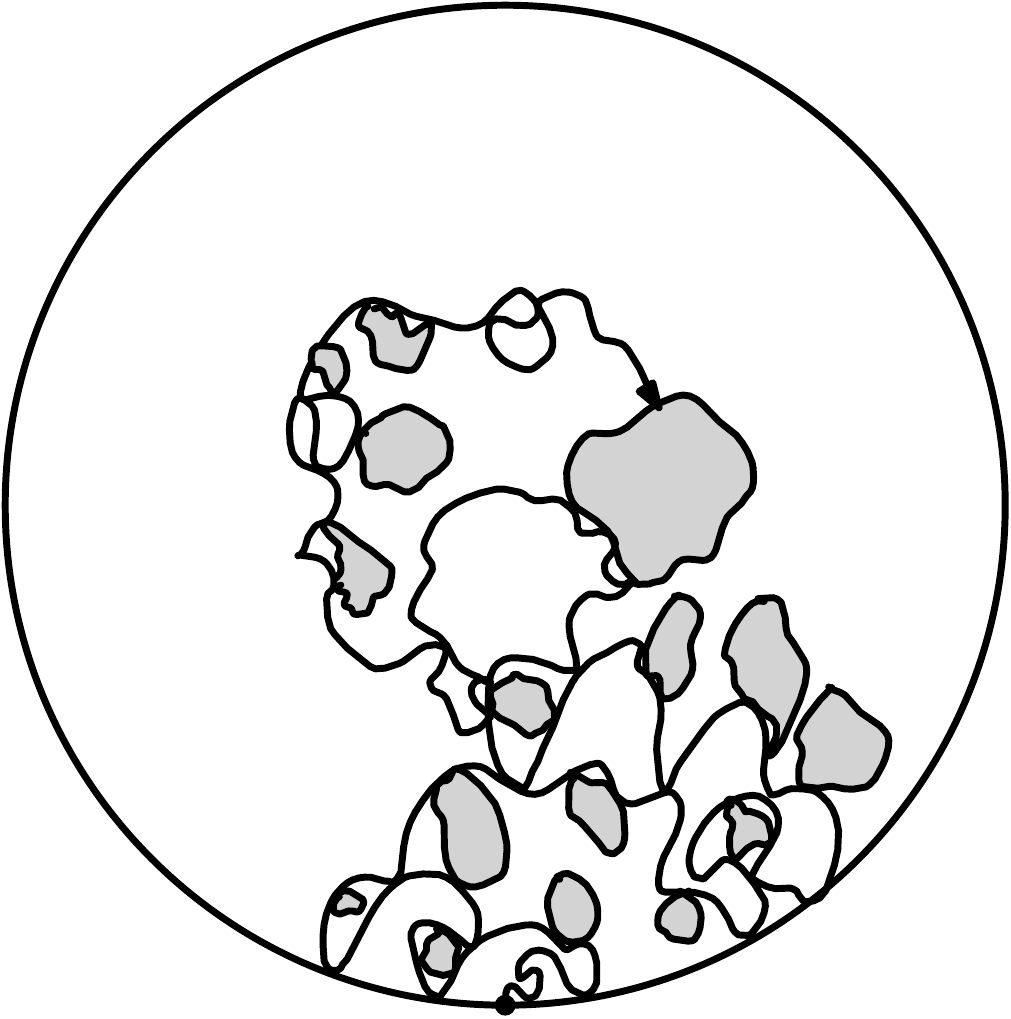} 
\includegraphics[scale=.5]{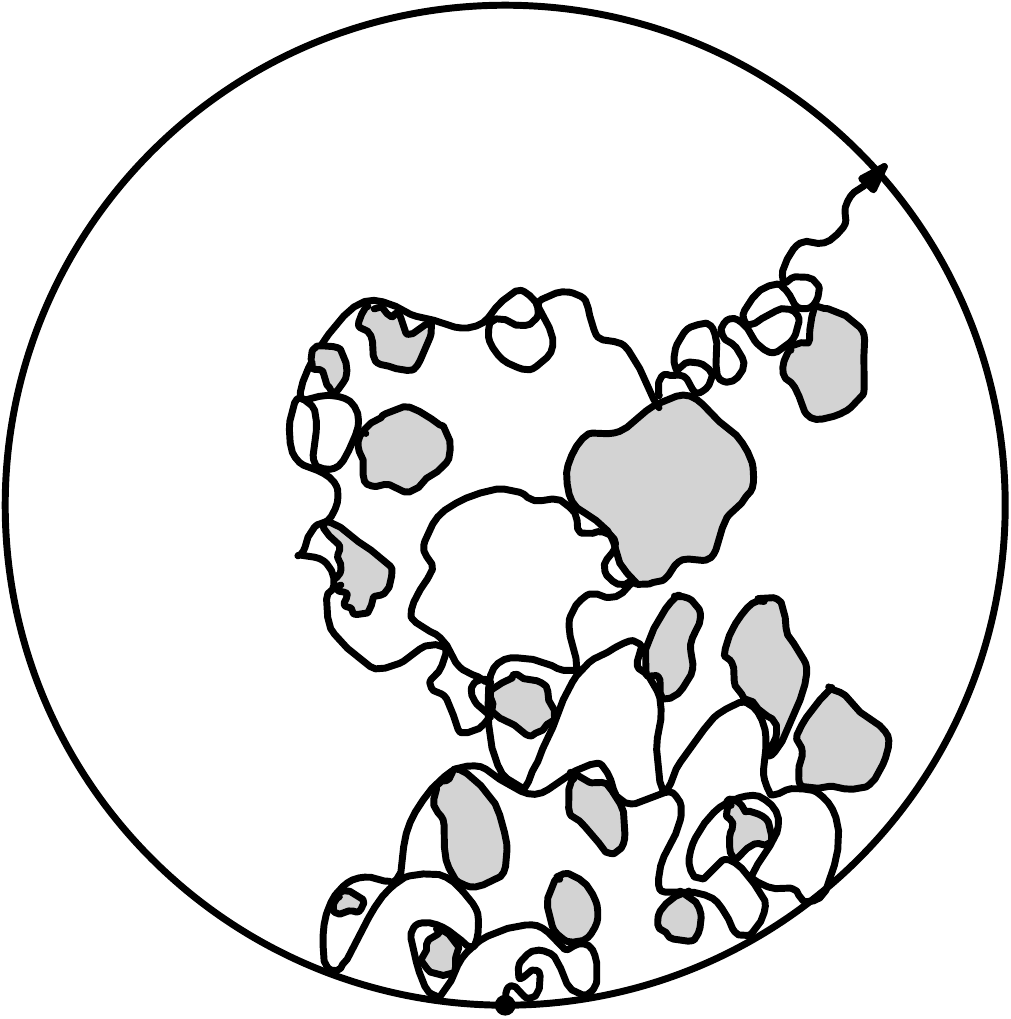} 
	\caption{Examples of configurations when the exploration process splits the to-be-explored domain into two pieces}
	\label{split2}
\end{figure}
\end{itemize}

In this way, when one discovers this ``CLE on LQG'' structure via the CPI tree (and when one hits a CLE \jason{loop} for the first time, one discovers it entirely), then one gets a fragmentation tree type-structure. It turns out to be handy to use the quantum length of the CPI branches as time-parameterizations for the tree structure. At any time along one branch, the label is the boundary length of the remaining to be discovered domain that this branch is currently discovering. If one follows one branch of the tree, these labels will have positive jumps that correspond to the discovery of a $\CLE_\kappa$ loop, and negative jumps that correspond to times at which the CPI splits the remaining-to-be-discovered domain into two pieces. 

Our first main  statement goes as follows:
\begin{theorem}[Exploration tree of a CLE on a quantum disk] 
\label{thm1}
The law of the obtained fragmentation tree (obtained from drawing a CLE exploration on an independent LQG disk as just described) is exactly that of the fragmentation tree-structure $\CT$ described above, with 
\[ u= -  \cos ( \pi \alpha) = - \cos ( 4 \pi / \kappa). \]
Furthermore, conditionally on $\CT$, the quantum surfaces encircled by the $\CLE_\kappa$ loops are independent quantum disks (the boundary length of which are given by the jumps in $\CT$). 
\end{theorem}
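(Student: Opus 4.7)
The plan is to follow a single branch of the CPI/exploration tree targeting a fixed boundary point, identify the law of the evolution of the boundary length of the remaining-to-be-discovered region under the quantum length parameterization of the trunk, and then upgrade to the whole tree by a branching Markov argument. If I can show that the one-branch process has exactly the law of the $Y$ described in the introduction with the stated $u$, the conditional independence of the sub-branches after any pinch-off or loop-discovery event (given by the Markov properties of the CLE exploration, of the CPI tree, and of the LQG surface along $\SLE_\kappa$/$\SLE_{\kappa'}$ interfaces) automatically assembles these single-branch laws into the full fragmentation tree $\CT$ with the correct joint distribution.

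Once reduced to a single branch, I would invoke the loop-trunk decomposition of \cite{cle_percolations}: the branch $\SLE_\kappa(\kappa-6)$ is obtained by first sampling the trunk, an $\SLE_{\kappa'}(\kappa'-6)$ targeting the chosen point, and then attaching to it an independent Poisson process of $\SLE_\kappa$-type bubbles rooted along the trunk. This splits the boundary-length process into two independent sources of jumps: negative jumps coming from pinch-off events of the trunk (which split the remaining domain, and along a single branch one keeps only the piece still containing the target, matching the way $Y$ follows only the larger offspring), and positive jumps coming from the discovery of $\SLE_\kappa$ bubbles (the CLE loops), whose quantum lengths are added to the boundary length of the remaining-to-be-explored component.

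The next step is to identify the two jump intensities in the quantum-length time parameterization. For the negative jumps, I would appeal to the coupling of $\SLE_{\kappa'}(\kappa'-6)$ with quantum disks/wedges from \cite{SHE_WELD,dms2014mating} and subsequent works, which yields that the boundary length of the remaining region evolves, before hitting $0$, as a spectrally negative $\alpha$-stable process conditioned to stay positive; the positivity conditioning converts the bare density $a''\, dl/l^{1+\alpha}$ into exactly the state-dependent rate $a'' y^{\alpha+1}/(l^{\alpha+1}(y-l)^{\alpha+1})\one_{l<y/2}$ appearing in $Y$. For the positive jumps, the quantum-length distribution of an $\SLE_\kappa$ bubble rooted on the trunk inherits the same $\alpha$-stable scaling from the LQG/SLE quantum length construction, giving a bare positive-jump density $a'\, dl/l^{1+\alpha}$, and the same conditioning produces the rate $a' y^{\alpha+1}/(l^{\alpha+1}(y+l)^{\alpha+1})$. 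The main obstacle is to compute the exact ratio $u=a'/a''$: this requires matching two infinite-measure normalizations, one coming from the $\SLE_\kappa$ bubble measure and the quantum length of $\SLE_\kappa$-curves, the other from the pinch-off intensity of the $\SLE_{\kappa'}(\kappa'-6)$ trunk on LQG. I would expect to get the trigonometric factor $-\cos(\pi\alpha)=-\cos(4\pi/\kappa)$ by combining either Dub\'edat-type boundary weight formulas for $\SLE_\kappa$ loops, or equivalently by cross-checking against known identities in the $\ccwBCLE$/CPI framework of \cite{cle_percolations}; this is the most delicate computation of the proof.

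Finally, the conditional-independence statement about the surfaces surrounded by the $\CLE_\kappa$ loops follows from the quantum welding picture of \cite{SHE_WELD,dms2014mating}: the surface enclosed by an $\SLE_\kappa$ loop drawn on an independent quantum disk is itself a quantum disk whose boundary length is the quantum length of the loop. Applied inductively along the branches of the exploration, together with the Markov property of the LQG surface along the CPI trunk, this yields that conditionally on the tree $\CT$ the cut-out quantum surfaces are independent quantum disks with boundary lengths prescribed by the positive jumps of $\CT$.
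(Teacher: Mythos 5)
Your high-level plan (analyze one branch, then assemble the tree by branching/Markov independence; use the loop-trunk decomposition of \cite{cle_percolations}; use welding for the conditional law of the encircled surfaces) is broadly consistent with the paper, but there are two concrete gaps, and a significant structural difference that makes those gaps hard to fill on the route you propose.

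First, you never pass through the quantum \emph{half-plane} setting. This is not a cosmetic omission: the paper's entire derivation of the exact jump rates takes place on the half-plane, because only there are the left- and right-boundary-length processes $L$ and $R$ honest stationary $\alpha$-stable L\'evy processes (Proposition~\ref{prop:stationarity} together with the scaling argument in its corollary). The disk case is then obtained as an absolutely continuous change of measure with Radon--Nikodym derivative $\Delta_0^{\alpha+1}/\Delta_t^{\alpha+1}$ (Proposition~\ref{prop:quantum_disk_bl}), where the exponent $\alpha+1$ comes from the boundary-length density of the quantum disk measure, not from any generic conditioning. Your claim that ``the positivity conditioning converts the bare density $a''\,dl/l^{1+\alpha}$ into exactly the state-dependent rate'' is wrong: conditioning a spectrally negative (or general) $\alpha$-stable process to stay positive is a Doob $h$-transform by $x^{\alpha(1-\rho_{\mathrm{pos}})}$, which does not produce the exponent $\alpha+1$ appearing in $Y$. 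Moreover the indicator $\one_{l<y/2}$ on the negative jumps of $Y$ does not come from any conditioning at all; it is the ``follow the larger offspring'' branching rule (the $q=\infty$ exploration of Section~\ref{S.Explo}). In short, you need a genuine comparison with the half-plane, not a generic stable-process conditioning argument.

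Second, and more seriously, your plan for computing the constant $u=-\cos(\pi\alpha)$ does not name a mechanism that would deliver it. The paper derives $u$ by a completely different route: it computes the exponent of the ladder-height intensity of $L$ (via \cite[Ch.~VIII]{bertoin96levy}) in two ways --- once in terms of the positivity parameter of the stable process (which encodes the ratio of up/down jump intensities), and once in terms of the Bessel-excursion dimension of the weight-$W_L$ quantum wedge cut out to the left of the trunk $\eta'$ (via Theorem~\ref{thm:sle_kp_rp} and~\eqref{eqn:bessel_dim_wedge_weight}). Equating the two exponents, and substituting the known value $\rho'=\kappa'-6$ for the totally asymmetric trunk, yields $u_L=-2\cos(\pi\alpha)$. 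Even then one still needs Lemma~\ref{a-a+} ($a_{l,-}=a_{r,-}$), which the paper proves by a symmetry argument on the \emph{disk} using the Radon--Nikodym description and a coupling of two explorations targeting different boundary points (Section~\ref{Scons}). Your proposal to extract $u$ from ``Dub\'edat-type boundary weight formulas'' or from ``known identities in the BCLE/CPI framework'' is not a proof: those ingredients live in the pure SLE world and by themselves say nothing about the LQG-parameterized jump intensities of the boundary-length process, which is the object you need. Finally, while your appeal to welding for the ``encircled surfaces are quantum disks'' claim is correct in spirit, the actual verification requires the bead-of-wedge analysis of Lemmas~\ref{qdright} and~\ref{lem:disk_limit}, which is not automatic and is what the paper's appendix is for.
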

As a consequence, one for instance gets that the joint law of all the quantum lengths of all $\CLE_\kappa$ loops in a quantum disk is the same as the collection of all the
positive jumps appearing in $\CT$. 

We now give an equivalent reformulation of this theorem in \jason{terms} of the law of one particular branch of the exploration tree. 
Suppose that one traces the branch of the exploration tree (parameterized by its quantum length) defined by the following rule: Whenever the trunk disconnects the remaining-to-be discovered domain into two pieces, one continues exploring the branch of the tree in the 
domain with largest boundary length. At such a time, the boundary-length $L_t$ of the remaining to be discovered domain makes a negative jump from $y$ to some $y-l$ with $l < y/2$ --- and when the CPI discovers a CLE loop for the first time, this process has a positive jump of size given by the quantum length of that loop.

\begin{theorem}[Branch of the  CLE exploration tree  on a quantum disk]
\label{thm2}  \ww{Up to a linear time-change}, the law of this process $L$ is the law of the process $Y$ described above, with the relation 
 $u = - \cos ( \pi \alpha)$.  Furthermore, conditionally on the collection of jumps of $L$, the law of the quantum surfaces that are cut out by the trunk of the $\SLE_\kappa (\kappa -6)$ process (corresponding to the negative jumps of $Y$) and of the inside of the discovered CLE loops (that correspond to the positive jumps of $Y$) are all independent quantum disks. 
\end{theorem}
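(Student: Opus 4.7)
The plan is to deduce this theorem as essentially a direct consequence of Theorem~\ref{thm1}, together with a verification that the process obtained by the ``follow the largest boundary length'' rule coincides with the process $Y$ described in the introduction.

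First, I would reduce to the tree-level statement. Theorem~\ref{thm1} gives that the full exploration tree is distributed as the fragmentation tree $\CT$ with parameter $u = -\cos(\pi\alpha)$, and that conditionally on $\CT$ the $\CLE_\kappa$-loop interiors are independent quantum disks with the prescribed boundary lengths. The process $Y$ was introduced precisely as the ``follow-the-largest-offspring'' branch of $\CT$: at each negative jump $y \mapsto y-l$ in the fragmentation, one continues along the piece of size $\max(l,y-l)$; positive jumps are kept as they are. The $\sigma$-algebra generated by $L$ together with its jump-labeling therefore corresponds to the branch of $\CT$ selected by this rule. Hence, once one knows Theorem~\ref{thm1}, the law of $L$ coincides with the law of $Y$ as soon as one knows that the jump rates of the ``follow-the-largest'' branch of $\CT$ are those recalled in the introduction. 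This last point is a deterministic calculation on Lévy processes conditioned to stay positive: starting from the compensated sum of jumps description of $X$, one restricts to the event that the process does not cross zero and selects the larger of the two pieces at each negative jump; the resulting jump rates are exactly the expressions displayed in the introduction. This is a standard computation for general stable processes and can be quoted from (or easily recovered using) the fragmentation theory references \cite{Bertoin,bbck}.

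Next I would explain the linear time change. The branch of the tree explored here is parameterized by the quantum length of the CPI trunk, which is a constant multiple of the natural time parameter of the $\SLE_{\kappa'}(\kappa'-6)$ process. The process $Y$ in the introduction was defined in terms of jump rates up to a multiplicative normalization of the time variable (both $a'$ and $a''$ are only specified up to a common constant, equivalent to the same linear time change by stable scaling). Matching these two normalizations gives the linear time change asserted in the theorem statement.

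Finally, I would address the conditional independence of the cut-out quantum surfaces. For positive jumps, this is exactly the ``independent quantum disk'' clause in Theorem~\ref{thm1}. For negative jumps, the surfaces cut out are the ``non-followed'' components at each trunk splitting. By Theorem~\ref{thm1} and the Markov/branching structure of $\CT$, the exploration tree restricted to any such non-followed piece is an independent copy of $\CT$ started from the corresponding boundary length; combined with the independence of the CLE loops in that piece (again by Theorem~\ref{thm1}), this forces the underlying surface to be a quantum disk of that boundary length, independent of everything previously explored and of the other non-followed components. Concretely, one reads this off by comparing the joint distribution of (branch $L$, cut-out surfaces) against the distribution one would obtain by starting from $\CT$ plus independent decorations.

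The main obstacle is the last step: checking that the quantum surfaces cut out at trunk-splitting times are indeed quantum disks (not merely some a.s.\ determined surfaces that play the correct combinatorial role) and not merely that their boundary lengths match the jumps of $L$. This uses the welding/Markov property for $\SLE_{\kappa'}(\kappa'-6)$ on LQG from \cite{SHE_WELD,dms2014mating}, applied at successive stopping times along the trunk; the induction on the branching structure of $\CT$ is straightforward, but making precise the simultaneous joint independence of all cut-out pieces (countably many, with no first one in any natural order) is the only step that requires genuine care.
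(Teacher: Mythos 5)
Your proposal takes a genuinely different route from the paper, and the route has a real gap in its last step.

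The paper does not prove Theorem~\ref{thm2} as a corollary of Theorem~\ref{thm1}: the two are stated as equivalent reformulations and are proved \emph{simultaneously} at the end of Section~\ref{S.Explo}. The engine of the argument is Proposition~\ref{prop:quantum_disk_bl}, which establishes two things at once for the chordal exploration of a quantum disk: (a) the Radon--Nikodym derivative $\Delta_0^{\alpha+1}/\Delta_t^{\alpha+1}$ of the boundary-length process relative to the half-plane stable process on $E_t$, and (b) the fact that the remaining-to-be-explored domain $\CD_t$ is, conditionally on its boundary length, a quantum disk. From (a), plus the $(q=\infty)$-retargeting rule, the jump rates of $\Delta_t$ are read off directly and match the rates defining $Y$; the identification $u=-\cos(\pi\alpha)$ and the conditional quantum-disk property of all cut-out pieces are then inherited from the half-plane results (Propositions~\ref{threeindep} and~\ref{fourindep}, Lemma~\ref{a-a+}, Theorems~\ref{mainthm<4}--\ref{mainthm<4-22}) via (a) and (b). No ``recoverability of the surface from its tree and decorations'' is invoked anywhere.

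By contrast, your argument crucially hinges on the claim that a quantum surface is determined by its fragmentation tree together with the independent quantum-disk decorations at the positive jumps, so that matching the law of that data to the one for a fresh quantum disk forces the cut-out piece to \emph{be} a quantum disk. That determinacy statement is nontrivial; in the paper it appears only as an aside in the remark following Proposition~\ref{threeindep}, with a pointer to \cite[Section~9]{dms2014mating}, and it is never used in the proof of Theorems~\ref{thm1}--\ref{thm2}. Theorem~\ref{thm1} as stated only gives the quantum-disk property of the $\CLE_\kappa$-loop interiors; it says nothing about the surfaces cut out by the trunk, so the logical path ``Theorem~\ref{thm1} $\Rightarrow$ trunk-cut-out pieces are quantum disks'' requires precisely the determinacy you wave at but do not prove. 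You correctly name the obstacle yourself (``this uses the welding/Markov property \ldots the only step that requires genuine care''), but the missing ingredient is exactly what Proposition~\ref{prop:quantum_disk_bl} supplies: it asserts directly that the complementary domain in the chordal disk exploration is a quantum disk, which is then transported through the retargeting scheme. In summary: your tree-level reduction of the first part (law of $L$ equals law of $Y$, up to time rescaling) is essentially sound given Theorem~\ref{thm1}, but the second part (joint conditional quantum-disk independence including the negative-jump surfaces) is not a consequence of Theorem~\ref{thm1} alone; you need either the unproven recoverability statement or, as in the paper, the direct absolute-continuity argument of Proposition~\ref{prop:quantum_disk_bl}.
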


\subsubsection{The natural LQG area measure in the CLE carpet}

The previous description of the exploration mechanism in a quantum disk makes it possible to relate many questions to computations for these labeled L\'evy trees, for which there exists a rather substantial literature, including recent results motivated by the study of planar maps with large faces \cite {lgm2011large_faces,bck,bbck,dadoun}. 

We illustrate this here by constructing and discussing  
the natural LQG-measure that is supported on the $\CLE_\kappa$ carpet in the quantum disk.
This measure conjecturally corresponds to the scaling limit of the uniform measure on a planar map with large faces, as studied for instance in \cite{lgm2011large_faces}, and it is therefore an interesting and potentially very useful object to have at hand, if one tries to study the scaling limit of such maps -- see \cite {bbck,bck} for results in this direction. 

Let us first recall that  ``Kesten-Stigum''-type results on multiplicative cascades as developed in \cite {bbck,dadoun} make it possible construct a ``natural measure'' $\wt {\mathcal Y}$ on the boundary of ${\mathcal T}$; this measure is already called {\em the intrinsic area measure} in \cite {bbck} (this terminology is also motivated by the fact that it arises as the limit of the counting measure of some planar maps with large \jason{faces}, when encoded in a tree).

However there are twists that need to be overcome to check that this measure $\wt {\mathcal Y}$ corresponds to a natural quantum measure in the CLE carpet. The main one is arguably that in the continuun, the tree $\CT$ -- and therefore the measure $\wt {\mathcal Y}$ on its boundary, as well as the map from the tree and the carpet -- are functions of the additional randomness that is provided by the CPI exploration tree within the CLE carpet. It is therefore a priori not clear that the obtained measure in the CLE carpet that would correspond to $\wt {\mathcal Y}$ is a deterministic function of the CLE carpet and the LQG measure only. 

\begin{theorem}[The natural LQG measure on the CLE carpet]
\label {thmmeasure}
Consider a $\CLE_\kappa$ drawn on an independent $\jasonr{\gamma}$-quantum disk, for $\kappa \in (8/3, 4)$, embedded into the unit disk $D$. Consider then a CPI exploration tree of the CLE$_\kappa$ started from a boundary-typical point, that in turn defines a fragmentation-tree $\CT$ as in Theorem \ref {thm1}, and a measure $\wt {\mathcal Y}$ on its boundary. Then, there exists almost surely a unique measure ${\mathcal Y}$ on the CLE$_\kappa$ carpet with the property that its image in $\CT$ is well-defined and is the measure $\wt {\mathcal Y}$. Furthermore, this measure is a deterministic function of the CLE and the LQG measure only.    
\end{theorem}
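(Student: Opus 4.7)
The plan is to define $\CY$ as a pushforward of $\wt{\CY}$ along a natural map from $\partial \CT$ to the $\CLE_\kappa$ carpet coming from the CPI exploration, establish uniqueness by showing that this map is $\wt{\CY}$-a.e.\ injective, and then, as the main obstacle, prove that the resulting measure $\CY$ does not in fact depend on the extra randomness used to build the exploration.

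First I would build the projection $\pi \colon \partial \CT \to (\CLE_\kappa\text{ carpet})$. An infinite ray of $\CT$ corresponds to a nested sequence of to-be-discovered components whose quantum boundary lengths tend to $0$ along the ray; standard LQG estimates on nested domains inside a quantum disk (relating quantum boundary length, LQG area, and Euclidean diameter) force the Euclidean diameters of these components to shrink to $0$, so the ray converges to a single point that by construction lies in the carpet. Set $\CY := \pi_* \wt{\CY}$; it is supported on the carpet and has image $\wt{\CY}$ in $\CT$ by definition. For uniqueness, two distinct carpet points $z_1 \neq z_2$ must be separated by some CPI branch at a finite quantum time (otherwise the LQG-estimate above forces $z_1 = z_2$), so $\pi$ is $\wt{\CY}$-a.e.\ injective and $\CY$ is the unique measure on the carpet whose lift is $\wt{\CY}$.

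To remove the dependence on the exploration, my strategy is to exhibit a CLE/LQG-intrinsic approximation of $\CY$. For $\eps > 0$, let $A_\eps$ be the a.s.\ finite family of $\CLE_\kappa$ loops with quantum boundary length at least $\eps$, and let $D_\eps \subset D$ be obtained by deleting the interiors of the loops in $A_\eps$. The connected components of $D_\eps$ form a partition of the carpet that depends only on the CLE and the LQG. Define
\[
\CY_\eps \; := \; \sum_{C} \bigl( \nu_h(\partial C) \bigr)^{\beta} \; \density_C,
\]
where the sum is over the components $C$ of $D_\eps$, $\density_C$ is a natural reference measure supported in the closure of $C$ (for instance the normalized LQG area measure on $C$), and $\beta$ is the Malthusian exponent of the self-similar fragmentation identified in Theorem~\ref{thm1}. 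Each $\CY_\eps$ is manifestly a measurable function of the CLE and the LQG.

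The remaining step is to show $\CY_\eps \to \CY$ weakly almost surely as $\eps \to 0$. Given the exploration, the loops in $A_\eps$ are in canonical bijection with the nodes of $\CT$ at which the positive jump exceeds $\eps$, and the components of $D_\eps$ correspond to an antichain in $\CT$ obtained by stopping each branch at the first time its label has a positive jump of size at least $\eps$. Convergence to $\wt{\CY}$ in $\CT$-coordinates is then a scale-indexed Kesten--Stigum statement for the multiplicative cascade $\CT$, in the spirit of \cite{Bertoin,Biggins,bbck,dadoun}; pushing forward by $\pi$ yields $\CY_\eps \to \CY$ on the carpet, so $\CY$ is a measurable function of the CLE and the LQG. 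The main obstacle is precisely this scale-versus-generation comparison: the usual Kesten--Stigum theorem is indexed by generations of $\CT$ (which depend on the exploration), and promoting it to a scale-indexed convergence indexed by the CLE-intrinsic threshold $\eps$ requires uniform martingale and moment estimates of the type developed in the fragmentation/cascade literature just cited.
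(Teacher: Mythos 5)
Your plan takes the direct-pushforward route that the paper explicitly flags as problematic (``describing the measure here simply as the pushforward of the measure on the boundary of the tree via the embedding would require some justification''), and the intrinsic approximation you propose does not, as stated, produce a nontrivial decomposition.

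The first concrete gap is in the construction of $\CY_\eps$. For a simple $\CLE_\kappa$ the loops are disjoint simple closed curves, none of which surrounds another among the outermost loops, so $D \setminus \bigcup_{\ell \in A_\eps} \mathrm{int}(\ell)$ is a single connected (multiply-connected) domain. The ``components of $D_\eps$'' are therefore just $\{D_\eps\}$, not a fine partition of the carpet, and $\CY_\eps$ collapses to a single term that does not converge to $\CY$. Cutting the carpet into small pieces requires CPI branches between the loops, which is precisely the extra randomness you are trying to eliminate. The paper's intrinsic approximant, $\eps^{\alpha+1/2} N_{[\eps,2\eps]}(O)$ (a rescaled count of loops of quantum length in $[\eps,2\eps]$ inside $O$), avoids this: a loop count inside an open set $O$ is manifestly a function of the CLE and the LQG alone, no auxiliary decomposition of $D$ into pieces is needed.

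The second gap is that the ``scale-versus-generation'' step, which you correctly identify as the main obstacle, is left to the literature, but the paper points out that this particular statement is \emph{not} in the literature (``this statement does not seem to be written up in the existing literature, so we provide a proof here''). Section~5.4 of the paper supplies it: it uses \cite[Theorem~3.4]{dadoun} on stopping lines $\CL_y$, a priori moment bounds comparing positive- and negative-jump rates, a truncation at scale $\eps x$ versus $Mx$, and a law-of-large-numbers step using integrability of quantum disk areas. Simply citing ``uniform martingale and moment estimates of the type developed in the fragmentation/cascade literature'' does not discharge this step.

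Finally, even if the total-mass convergence were established, passing from total masses on sub-disks to an actual measure on the carpet requires the finite-additivity and boundary-negligibility argument of the paper (the randomized embedding, the hyperbolic geodesics $l_i$, Lemma~5.5 showing that $\sum_{j} \mu(D_j^k) \one_{D_j^k \cap l_i \neq \emptyset} \to 0$, and Carath\'eodory's extension theorem). Your a.e.-injectivity claim for $\pi$ rests on ``standard LQG estimates on nested domains,'' but what actually needs proving is that (i) the tree boundary maps to single points (diameters shrink), (ii) distinct rays with positive $\wt{\CY}$-mass land on distinct points, and (iii) the resulting set function is $\sigma$-additive on a generating ring; the paper's hyperbolic-geodesics argument is precisely how it handles (iii). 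As written, your proposal asserts these but does not prove them.
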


We will prove this theorem by showing that the measure ${\mathcal Y}$ has necessarily the property that for a wide class of open sets $O$ (that includes $D$ itself), 
${\mathcal Y}(O)$ is the limit in probability of  $\eps^{\alpha + 1/2} N_{[\eps, 2 \eps]} (O)$, where $N_{[\eps, 2 \eps]} (O)$ is the number of $\CLE_\kappa$ loops entirely contained in $O$ that have a quantum length greater in $[\eps, 2 \eps]$. We then argue that this property (which is instructive in its own right) does almost surely characterize the measure ${\mathcal Y}$ uniquely. This in turn shows that ${\mathcal Y}$ does indeed not rely on the randomness coming from the CPI (because the quantities $ N_{[\eps, 2 \eps]} (O)$ do not rely on this randomness). 

Note that when one applies the same ideas using $\wt \CT$ instead of $\CT$ (i.e., exploring the entire domain $D$ instead of just the CLE carpet), one constructs the (usual) LQG area of the quantum surfaces using \jason{martingales which are similar to those used to construct the measure on~$\CT$}. 

\subsubsection {CLEs on quantum half-planes} 

We will derive these results on CLE on a quantum disk as consequences of results for CLE on a quantum half-plane that we will now briefly describe.  The results on a quantum half-plane somehow correspond to what happens in the quantum disk case when one zooms into the neighborhood of the starting point. 

Here, one considers a quantum half-plane (also known as a particular quantum wedge) which is another quantum surface that can be naturally defined in a simply connected domain.  This time, the choice of the harmonic function 
is such that the total quantum area of the domain and the total boundary length are infinite, but only in the neighborhood of one special boundary point (which is chosen to be $\infty$ when the domain is the upper half-plane).
One then considers: 

\begin{itemize}
\item A $\gamma$-quantum half plane, so that $0$ is a ``typical'' point for the boundary-length measure. 
\item An independent $\SLE_\kappa (\kappa -6)$ process from $0$ to $\infty$. 
The process traces some loops, that can be viewed as the $\CLE_\kappa$ loops encountered by the $\SLE_{\kappa'} (\kappa' -6)$ trunk (for $\kappa' = 16/ \kappa$). 
\end{itemize}

This CLE exploration can then be parameterized by the quantum length of the CPI/trunk, and along the way, it will: 

\begin{itemize}
\item Discover $\CLE_\kappa$ loops with quantum lengths $L_{t_i}$ at a countable random dense collection of times $(t_i)$. We can formally attach at once the entire loop to the CPI at this time $t_i$. When equipped with the LQG measure and with marked boundary point given by the point at which the loop is attached to the trunk, one obtains a quantum surface with one marked point. 
\item When the trunk bounces on the boundary of the remaining-to-be-explored region, it will disconnect some parts of $\h$ from $\infty$, by leaving them to its right or to its left. Again, the disconnected components (with marked point given by the disconnection point) is a quantum surface with one marked boundary point.
\end{itemize}

\begin{theorem}[Exploration of a CLE on a quantum half-plane] 
\label{mainthm<4}
The three ordered families of quantum surfaces (cut out to the left, cut out to the right and inside the traced loops) have the law of 
three independent Poisson point processes of quantum disks, defined under multiples $a_{l,-}$, $a_{r,-}$ and $a_+$ of the same natural infinite measure on quantum disks, where
 $a_{l,-} = a_{r,-}$ and $a_+  = - 2 a_{l,-} \cos (4 \pi /\kappa)$.
\end{theorem}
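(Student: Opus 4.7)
The strategy is to combine the loop-trunk decomposition of \cite{cle_percolations} with the SLE/LQG conformal welding machinery of \cite{SHE_WELD,dms2014mating}, and then to identify the three intensity constants by an intensity matching in the quantum natural parameterization of the trunk.

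\emph{Reduction via the loop-trunk decomposition.} I first invoke the loop-trunk decomposition recalled in the introduction: the $\SLE_\kappa(\kappa-6)$ process can be sampled by first drawing the trunk (an $\SLE_{\kappa'}(\kappa'-6)$ curve from $0$ to $\infty$) and then, conditionally on it, attaching $\SLE_\kappa$ bubbles at a Poissonian rate along it. Because the whole exploration is independent of the quantum half-plane, the three ordered families in the statement decouple into (a) the quantum surfaces cut off on the left and right of the trunk alone, and (b) the quantum surfaces enclosed by the attached bubbles. It therefore suffices to establish the Poissonian structure for each of these families separately and to compute the three intensity constants.

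\emph{Identification as Poisson point processes of quantum disks.} For the left/right cut-offs, the key tool is the quantum zipper / mating-of-trees picture of \cite{SHE_WELD,dms2014mating}: an $\SLE_{\kappa'}(\kappa'-6)$ curve drawn on an independent quantum half-plane, parameterized by quantum natural length, becomes stationary; its Markov property together with the scale-invariance of the quantum half-plane identifies each increment of the exploration with an independent copy of the initial configuration up to a translation in quantum length. Combined with the fact that the complementary connected components of an $\SLE_{\kappa'}$-type curve on such a surface are independent quantum disks, this stationarity forces the ordered left and right cut-offs to be two Poisson point processes whose intensity measures are multiples of the canonical infinite measure on quantum disks; the equality $a_{l,-}=a_{r,-}$ then follows from the left/right symmetry of the trunk (the asymmetry of the underlying $\SLE_\kappa(\kappa-6)$ lives entirely in the orientation of its loops). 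The same stationarity argument, applied to the marked points at which bubbles attach to the trunk and using that the interior of each $\SLE_\kappa$ bubble equipped with its induced LQG structure is a quantum disk independent of the rest, gives the loop family a Poisson structure with some intensity $a_+$ times the same canonical measure.

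\emph{Computing the ratio and main obstacle.} The delicate step is identifying $a_+/a_{l,-}=-2\cos(4\pi/\kappa)$. The idea is to read the ratio off the loop-trunk bubble intensity of the $\SLE_\kappa(\kappa-6)$ process computed in \cite{cle_percolations}: in the quantum natural parameterization of the trunk, the Poisson rate at which $\SLE_\kappa$ bubbles get attached has to be compared with the rate at which the trunk splits the remaining-to-be-explored region into two pieces. The former is governed by the $\SLE_\kappa(\kappa-6)$ bubble intensity, the latter by the excursion measure of the driving $\BES$-type process at its force point. Matching the two using the appropriate scaling rules for the quantum lengths of $\SLE_\kappa$ and $\SLE_{\kappa'}$ arcs on the same quantum half-plane (cf.\ Remark~\ref{GQLRemark}) produces the universal factor $u=-\cos(\pi\alpha)=-\cos(4\pi/\kappa)$ anticipated by the stable process of ratio $u$ described in the introduction; the factor of $2$ comes from splitting the negative-jump intensity of that stable process evenly between the left and right cut-offs. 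I expect this intensity matching, with its careful bookkeeping of the two distinct quantum-length scaling exponents on the $\SLE_\kappa$ and $\SLE_{\kappa'}$ sides, to be the main technical obstacle of the proof.
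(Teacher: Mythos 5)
Your overall outline (stationarity via the half-plane picture, then loop-trunk decomposition, then an intensity match) is the right shape, but you are missing the paper's central and genuinely nontrivial step, and your justification of $a_{l,-}=a_{r,-}$ is wrong.

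The trunk of the totally asymmetric $\SLE_\kappa(\kappa-6)$ is \emph{not} left/right symmetric: it is an $\SLE_{\kappa'}(0;\kappa'-6)$ process, and by Theorem~\ref{thm:sle_kp_rp} it slices the quantum half-plane into wedges of \emph{different} weights on its two sides, namely $\gamma^2-2$ for the components touching $\R_-$ and $2-\gamma^2/2$ for those touching $\R_+$. So ``left/right symmetry of the trunk'' cannot give $a_{l,-}=a_{r,-}$. In the paper this equality is exactly the content of Lemma~\ref{a-a+}, and it requires a detour that you do not have: one passes to the chordal exploration of an independent quantum disk, shows (Proposition~\ref{prop:quantum_disk_bl}) that the boundary-length process $(L,R)$ there is an explicit $h$-transform (with density $\Delta_0^{\alpha+1}/\Delta_t^{\alpha+1}$) of the half-plane stable pair, and then uses target-invariance of the $\SLE_\kappa^\beta(\kappa-6)$ tree to compare, inside a fixed disk, an exploration aimed at a point $r$ units to the right with the same exploration aimed at a point $r$ units to the left. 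Only then does the symmetry of the negative jump rates drop out. Without this, the theorem does not close: the half-plane computation by itself only gives $a_{r,+}/a_{r,-}$ and $a_{l,+}/a_{l,-}$, and converting those into a relation between $a_+$ and $a_{l,-}$ requires $a_{l,-}=a_{r,-}$.

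Your description of how to get the factor $-2\cos(4\pi/\kappa)$ is also too vague and not the paper's mechanism. The paper does not read an intensity off the $\CLE$ bubble measure in \cite{cle_percolations}; rather it computes, for each of the one-sided processes $L$ and $R$ separately, the ratio $u_L=a_{l,+}/a_{l,-}$ (resp.\ $u_R$) in two ways: (i) via Bertoin's ladder-height formula for $\alpha$-stable L\'evy processes, expressing the exponent of the ladder-height jump measure in terms of the positivity parameter and hence in terms of $u_L$ through $u_L = \sin(\pi\alpha(1-P_L))/\sin(\pi\alpha P_L)$; and (ii) via the Bessel dimension encoding the thin wedge $\CW_L$ cut off on that side (weight $W_L = \kappa-2+\kappa\rho'/4$), whose excursion-maximum intensity has exponent $\delta_L-3$. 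Matching $\delta_L-3=-\alpha P_L-1$ and then specializing $\rho'=\kappa'-6$ (the $\beta=-1$ case) or $\rho'=0$ (the $\beta=1$ case) yields $-\sin(2\pi\alpha)/\sin(\pi\alpha)=-2\cos(\pi\alpha)$. The factor of $2$ thus comes from this sine identity; it does not ``come from splitting the negative-jump intensity evenly,'' which presupposes the very asymmetry parameter you are trying to derive. You should make the ladder-height/Bessel-dimension matching explicit, and you must supply an argument for Lemma~\ref{a-a+} that does not rely on a symmetry that the trunk does not have.

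Finally, a smaller point: the Poissonian and quantum-disk structure of $S^r$ and $S^+$ (the components to the right of the trunk and inside the traced loops) does not follow directly from the quantum zipper; the paper needs the bead-level welding statement Lemma~\ref{qdright} together with the stationarity of Proposition~\ref{prop:stationarity} to propagate the quantum-disk law to all components. Your proposal gestures at the right picture but should flag that this is a separate lemma to be proved.
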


In other words, the CLE exploration on a quantum half-plane is related to the jumps of a $(4/\kappa)$-stable process with asymmetry parameter $u= - \cos (4 \pi / \kappa)$. This stable process is allowed to be negative, and can 
be thought off as the ``variation'' of the boundary length of the remaining-to-be explored region containing $\infty$. 

\subsubsection{Results for other CLE explorations}

We have so far only discussed the exploration of a $\CLE_\kappa$  by a totally asymmetric $\SLE_\kappa (\kappa- 6)$ processes, that correspond to the fact that all $\CLE_\kappa$ loops
are traced with the same orientation -- or equivalently, lie on the same side of the trunk. In the percolation interpretation of the CPI, this corresponds to the fact that all the $\CLE_\kappa$ loops are declared to be closed 
for the considered percolation mechanism. 

As explained and proved in \cite{SHE_CLE,SHE_WER_CLE,ww2013conformally}, this is not the only possible natural way to proceed in order to explore a CLE in a conformally invariant way. 
One can choose a parameter $\beta \in [-1, 1]$ and decide at each time at which one discovers a CLE loop, to trace it clockwise with probability $1-p := (1-\beta) / 2$ or counterclockwise with probability $p:=(1+ \beta)/2$ (these choices are made independently for each loop). This defines the so-called $\SLE_\kappa^\beta (\kappa-6)$ processes -- the previous totally asymmetric $\SLE_\kappa (\kappa -6)$ corresponds to the case $\beta =1$. In the CPI percolation interpretation, it corresponds to the fact that loops can be closed or open for the considered percolation.  
As explained in \cite{ww2013conformally}, this procedure allows one to trace loops of a $\CLE_\kappa$ using $\SLE_\kappa^\beta (\kappa-6)$ processes. Also, it has been shown in \cite[Section~8]{cle_percolations} (among other things) that this exploration indeed traces a continuous path. The continuous path that one obtains when one 
erases all the $\CLE_\kappa$-traced loops from this path is then a continuous curve, called the {\em trunk} of the $\SLE_\kappa^\beta (\kappa -6)$. When $\beta \in (-1,1)$, some of the loops lie on the 
left-hand side of the trunk, and some of the loops lie to its right. It is intuitively clear (and this is made rigorous in \cite{cle_percolations}) that in some appropriate sense, the 
respective proportion of loops attached to the left and to the right of the trunk are  $(1- \beta)/2$ and $(1+\beta)/2$. 

One can therefore wonder what type of quantum surfaces will be cut out by this process when drawn on a quantum half-plane or a quantum disk. Note that this time, one will 
obtain four types of quantum surfaces because now, the CLE loops that one traces can be on the right-hand side of the trunk (the counterclockwise loops) or to the left-hand side of the trunk. 
As one could have somehow expected, it turns out that $\beta$ only influences the left to right ratio of the positive jumps. In the   case of the exploration of a quantum half-plane goes, the exact statement goes as follows: 

\begin{theorem}[Results for other explorations] 
\label{mainthm<4-22}
The four ordered families of quantum surfaces (cut out to the left, cut out to the right, inside the left loops, inside the right loops) have the law of 
four independent Poisson point processes of quantum disks, respectively defined under $a_{l,-}$, $a_{r, -}$, $a_{l,+}$ and $a_{r,+}$ times the measure on quantum disks, where 
 $a_{r,-} = a_{l,-}$, $a_{l, +} = (1-p) a_+$,  $a_{r, +} 
 = p a_+$ and  $a_+ = - 2a_{l, -} \cos (4 \pi /\kappa)$.
\end{theorem}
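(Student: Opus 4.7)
Proof plan. The plan is to reduce Theorem~\ref{mainthm<4-22} to the totally asymmetric case, Theorem~\ref{mainthm<4}, by coupling the $\SLE_\kappa^\beta(\kappa-6)$ exploration of a given $\CLE_\kappa$ to its $\SLE_\kappa(\kappa-6)$ counterpart via independent Bernoulli orientation marks on the discovered loops. Such a coupling is supplied by the $\SLE_\kappa^\beta(\kappa-6)$ construction of \cite{ww2013conformally} (see also \cite[Section~8]{cle_percolations}): given the $\CLE_\kappa$, one may jointly realize all $\SLE_\kappa^\beta(\kappa-6)$ explorations so that the underlying loops are common, with only their orientations (and hence the trunk's local behavior around them) depending on $\beta$. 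Under this coupling, flipping a loop from counterclockwise to clockwise moves it from the right of the $\beta=1$ trunk to the left of the new trunk, without modifying the quantum surface inside the loop, which is a deterministic function of the $\CLE_\kappa$ and the LQG field only.

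Granted this coupling, Theorem~\ref{mainthm<4} applied to the $\beta=1$ exploration provides three independent PPPs of quantum disks with intensities $a_{l,-}$, $a_{r,-}$ (left/right cut-outs) and $a_+$ (the loop interiors, all on the right of the $\beta=1$ trunk), with $a_{l,-}=a_{r,-}$ and $a_+=-2a_{l,-}\cos(4\pi/\kappa)$. Marking each loop independently as ``kept counterclockwise'' with probability $p=(1+\beta)/2$ or ``flipped to clockwise'' with probability $1-p$ is an independent thinning of the loop-interior PPP, yielding two independent PPPs with intensities $a_{r,+}=p\,a_+$ and $a_{l,+}=(1-p)\,a_+$, as asserted. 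The remaining task is to identify the left and right cut-out PPPs, where the crucial claim is that their joint law does not depend on $\beta$: intuitively, the events that cut out a quantum surface by the trunk alone are produced by the CPI-tree data in the $\CLE_\kappa$ carpet, and this data is common to all the $\beta$-explorations — only the loop orientations differ. Granting this, the $\beta=1$ output of Theorem~\ref{mainthm<4} transfers verbatim to give the claimed intensities $a_{l,-}$ and $a_{r,-}$, and the independence of all four PPPs follows because the orientation coin flips are independent of the carpet-exploration and cut-out data.

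The main obstacle is the rigorous justification of this last step, namely the $\beta$-invariance of the cut-out PPP. What is needed is a clean separation, within the $\SLE_\kappa^\beta(\kappa-6)$ exploration of a $\CLE_\kappa$ on the quantum half-plane, between the ``carpet-exploration'' randomness (i.e., the branching CPI tree in the CLE carpet) and the loop-orientation randomness. Such a separation is built into the construction of \cite{cle_percolations,ww2013conformally}, but one has to verify carefully that the local trunk deformation produced by flipping a single loop does not alter the collection of quantum surfaces cut off from $\infty$ as marked quantum disks (carrying their boundary lengths), but only the label ``left'' versus ``right'' of each flipped loop. Here the quantum-zipper / conformal-welding inputs from \cite{SHE_WELD,dms2014mating} that already drive Theorem~\ref{mainthm<4} are the natural tool, as each disconnection event can be localized at a boundary-touching of the trunk and its contribution to the boundary-length process depends only on the local SLE/LQG data. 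Once this is established, the Poissonian and quantum-disk structure of each of the four families follows from Theorem~\ref{mainthm<4} together with the elementary facts that independent thinning of a PPP and independent superposition of independent PPPs yield independent PPPs.
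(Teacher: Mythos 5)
Your core idea is to fix the $\CLE_\kappa$ and treat the passage from $\beta=1$ to general $\beta$ as an independent Bernoulli re-labeling of the loop interiors, while claiming that the cut-out surfaces (left and right of the trunk) are ``carpet data'' common to all $\beta$. This is the step that fails. The trunk of an $\SLE_\kappa^\beta(\kappa-6)$ is, conditionally on nothing, an $\SLE_{\kappa'}(\rho'(\beta);\kappa'-6-\rho'(\beta))$ process with $\rho'$ depending on $\beta$ (Theorem~\ref{thm:cle_perc_trunk}), so the trunk is a genuinely different random curve for each $\beta$: flipping the orientation of a loop moves the trunk to the other side of that loop, which changes its Loewner dynamics, hence where it subsequently touches the boundary, hence the entire collection of cut-out domains. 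There is therefore no coupling in which the four families differ from the $\beta=1$ families only by which side each loop interior is attached to. Consistent with this, the paper explicitly flags (in the remarks at the end of Section~4) that it does \emph{not} establish whether $a_+$ or $a_-$ depend on $\beta$; your argument tacitly assumes they do not.

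There is a second problem even for the part you get right in spirit. The time parameterization of the four Poisson point processes is the quantum length of the trunk, and since the trunk is a different curve for different $\beta$, its quantum-length clock is also different. An independent $p$-vs-$(1-p)$ marking of the $\beta=1$ loop PPP does not produce the time-ordered $\beta$-PPP; what survives is only the ratio $a_{l,+}/a_{r,+}=(1-\beta)/(1+\beta)$, which the paper derives directly from the observation that the side of a freshly discovered loop is an independent coin toss conditionally on the past (this matches your thinning intuition, but only at the level of the ratio). The actual proof of Theorem~\ref{mainthm<4-22} then requires substantially more: Proposition~\ref{prop:stationarity2} (stationarity for general $\beta$, via an imaginary-geometry sandwich between two flow lines $F_L,F_R$), Lemma~\ref{qdrighttwo}/Proposition~\ref{fourindep} (the cut-out pieces are quantum disks), the left-right symmetry $a_{l,-}=a_{r,-}$ established as Lemma~\ref{a-a+} via a disk-exploration and absolute-continuity argument, and the ladder-height computation for $\alpha$-stable processes that yields $a_+=-2a_{l,-}\cos(4\pi/\kappa)$. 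None of these steps can be bypassed by coupling to the $\beta=1$ case, so the proposal has a genuine gap and does not follow the paper's route.
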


Actually, the knowledge of these four Poisson point processes allows one to reconstruct both the \jasonr{quantum} half-plane and the $\SLE_\kappa^\beta (\kappa -6)$ drawn on it.  So, in a nutshell, for these Poisson point processes of disks, the only difference with the totally asymmetric case is that each time one discovers a quantum disk that corresponds to a CLE loop, 
one tosses an additional $(1+ \beta)/2$ versus $(1- \beta)/2$ coin to decide on which side of the trunk it will be. 
The similar result holds true for asymmetric explorations of quantum disks. 

One consequence of Theorem~\ref{mainthm<4-22} 
is that one can complete the description of the loop-trunk 
of \cite{cle_percolations}. It is worthwhile to stress that this is a statement that does not 
involve any LQG, but that at present, we know of no way to derive it other than the one that we will give in this paper and that relies heavily on the interplay between CLE and LQG.  
Let us first remind the reader of one result from \cite{cle_percolations} about the law of the trunk of an $\SLE_\kappa^\beta (\kappa -6)$ process:  
\begin{theo}[Theorem~7.4 from \cite{cle_percolations}]
\label{thm:cle_perc_trunk}
  For all $\kappa \in (8/3, 4)$ and $\beta \in [-1,1]$, there exists $\rho' = \rho' (\beta, \kappa) \in [\kappa'-6,0]$ such that 
the law of the trunk of a $\SLE_\kappa^\beta (\kappa -6)$ process is a $\SLE_{\kappa'}(\rho'; \kappa'-6- \rho')$ process for $\kappa' = 16/\kappa$.  
\end{theo}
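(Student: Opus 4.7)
The plan is to characterize the trunk $\eta$ of the $\SLE_\kappa^\beta(\kappa-6)$ exploration via target invariance, conformal invariance, and a domain Markov property, and then to invoke the SLE classification to place it within the one-parameter family of $\SLE_{\kappa'}(\rho^L;\rho^R)$ processes satisfying $\rho^L+\rho^R=\kappa'-6$.

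First, I would fix a simply connected domain $D$ with two boundary points $x,y$, and construct the $\SLE_\kappa^\beta(\kappa-6)$ from $x$ to $y$ via the CLE branching-tree construction: sample a $\CLE_\kappa$, toss for each loop an independent $(1+\beta)/2$ versus $(1-\beta)/2$ orientation coin, and trace the CPI that attaches the loops in these orientations. The trunk $\eta$ is the continuous path from $x$ to $y$ obtained from the exploration path by erasing all the attached loops; its continuity is the content of Section~8 of \cite{cle_percolations}. Conformal invariance of $\eta$ then follows from the conformal invariance of $\CLE_\kappa$ and of the independent loop orientations.

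Next, I would verify target invariance by noting that, in the CLE branching-tree coupling, the explorations targeted at two different boundary points $y,y'$ agree until the time their common trunk separates $y$ from $y'$; since loop erasure is time-local, this coupling descends to the trunks. For the domain Markov property at the level of $\eta$, I would iterate the branching-tree construction in the remaining-to-be-explored domain past an $\eta$-stopping time $\tau$: the continuation of $\eta$ is the trunk of an independent $\SLE_\kappa^\beta(\kappa-6)$ in the complement of $\eta[0,\tau]$, whose law only depends on the conformal type of that complement together with the two boundary intervals immediately flanking $\eta(\tau)$ (which record the boundary lengths carrying the two force weights). With these three properties in hand, the SLE classification in the tradition of \cite{SHE_CLE} forces $\eta$ to be an $\SLE_{\kappa'}(\rho^L;\rho^R)$ with $\rho^L+\rho^R=\kappa'-6$; setting $\rho':=\rho^L$, the two extremal cases $\beta=\pm 1$ (the totally asymmetric $\SLE_{\kappa'}(\kappa'-6)$ of \cite{SHE_CLE} and its mirror image) together with the monotonicity of $\rho'$ in $\beta$ place $\rho'$ in the interval $[\kappa'-6,0]$.

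The hardest part is establishing the Markov property at the right level of coarsening: one must reduce from the rich filtration generated by the whole $\SLE_\kappa^\beta(\kappa-6)$ exploration (including all the loops already drawn with their chosen orientations) down to the much coarser filtration of the trunk alone, and show that the conditional law of the continuation depends only on the two boundary intervals adjacent to $\eta(\tau)$. This requires a careful argument using the independence of loop orientations and the fact that the CPI interacts with each loop only at its first visit, so that the already-attached loops can be forgotten once the future exploration has moved past them; once this is in place, the existence of $\rho'$ with the required properties is automatic from the SLE classification.
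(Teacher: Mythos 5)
This statement is not proved in the present paper: it is quoted verbatim as background material, labeled ``Theorem~7.4 from \cite{cle_percolations}'' and typeset with the ``Result~A'' numbering reserved for imported results, with the paper explicitly pointing to \cite[Sections~7--9]{cle_percolations} for the proof. So there is no internal proof to compare against; what can be assessed is whether your sketch matches the argument in \cite{cle_percolations}.

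Your proposed route (conformal invariance $+$ target invariance $+$ a two-marked-point conformal Markov property, followed by the Schramm--Wilson/Sheffield-type classification of chordal Loewner curves with two boundary force points as $\SLE_{\kappa'}(\rho^L;\rho^R)$ with $\rho^L+\rho^R=\kappa'-6$) is indeed the high-level frame that the proof in \cite{cle_percolations} fits into. However, the step you flag as hardest is precisely where your sketch breaks down. The assertion that ``the already-attached loops can be forgotten once the future exploration has moved past them'' is not correct as a justification of the trunk's Markov property: the trunk is a non-simple $\SLE_{\kappa'}$-type curve that repeatedly touches and bounces off the outer boundaries of loops it has already traced, so the future of the trunk does interact with the geometry of the already-attached loops, not merely with the conformal type of the remaining domain. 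Passing from the Markov property of the full $\SLE_\kappa^\beta(\kappa-6)$ exploration (where the filtration contains all drawn loop arcs) to a Markov property of the trunk alone requires one to identify and control the conditional law of the attached loops given only the trunk -- this is where \cite{cle_percolations} brings in the $\BCLE$ machinery and the imaginary-geometry GFF couplings, which are entirely absent from your outline. These ingredients are what make the statement a genuine theorem rather than a formal consequence of an SLE classification. Likewise, the assertion that $\rho'\in[\kappa'-6,0]$ via ``monotonicity of $\rho'$ in $\beta$'' is itself one of the nontrivial outputs of the analysis in \cite{cle_percolations} (and the present paper remarks, just after Proposition~\ref{looptrunkprop}, that this monotonicity is counterintuitive and tied to a L\'evy-compensation mechanism); it cannot be invoked as an input. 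In short, your sketch correctly names the target but does not supply the key lemma -- the identification of the conditional law of the loops given the trunk -- that makes the reduction to the SLE classification go through.
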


The fact that these $\SLE_{\kappa'}(\rho'; \kappa'-6- \rho')$ processes for $\rho' \in [\kappa'-6,0]$ show up is not really surprising here: They are the natural target-invariant variants of $\SLE_{\kappa'} (\rho')$ processes (in the same way as $\SLE_{\kappa'} (\kappa'-6)$ is the target-invariant variant of $\SLE_{\kappa'}$ -- see for instance \cite{cle_percolations} for a more detailed discussion). Recall also that in \cite{cle_percolations} the conditional law, given the trunk, of the 
collection of $\SLE_\kappa$ that are attached to the trunk is described.  
Except when $\beta \in \{ -1,0 ,1\}$ that correspond to the totally asymmetric cases and to the symmetric case, 
we did not derive in \cite{cle_percolations} the values of $\rho' (\beta, \kappa) $.
This following theorem provides the relation between $\rho'$ and $\beta$: 
\begin{theorem}[The law of the trunk of $\SLE_\kappa^\beta (\kappa -6)$]
\label{thm:beta_rho}
The value of $\rho' \in [\kappa' -6, 0]$ in terms of $\beta \in [-1,1]$ in Theorem~\ref{thm:cle_perc_trunk} is determined by the relation (we write $1/0= \infty$ so that this covers also the case $\beta=-1$): 
\[
 \frac {1-  \beta}{1 + \beta} = \frac { \sin (-\pi \rho' /2 ) } { \sin ( -\pi (\kappa'-6-\rho') / 2)} .\]
 \end{theorem}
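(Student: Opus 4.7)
Our plan is to compute the ratio $a_{l,+}/a_{r,+}$ of left-to-right CLE loop Poisson intensities in two different ways and to equate the resulting expressions. From the CLE-first point of view, Theorem~\ref{mainthm<4-22} gives directly, with $p=(1+\beta)/2$,
\[
\frac{a_{l,+}}{a_{r,+}}\ =\ \frac{(1-p)\,a_+}{p\,a_+}\ =\ \frac{1-\beta}{1+\beta}.
\]

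For the second expression, we reverse the order of sampling: by Theorem~\ref{thm:cle_perc_trunk}, the trunk is an $\SLE_{\kappa'}(\rho';\kappa'-6-\rho')$ on the quantum half-plane, so one may sample it first and then attach the CLE loops. In this picture the CLE loops are attached precisely at the left and right boundary-touching events of the trunk, and hence $a_{l,+}/a_{r,+}$ is equal to the ratio of the intensities of such events on the left and right sides of an $\SLE_{\kappa'}(\rho'_L;\rho'_R)$ curve (with $\rho'_L=\rho'$, $\rho'_R=\kappa'-6-\rho'$) drawn on an LQG half-plane, parametrized by the quantum length of the curve; this is an intrinsic feature of the $\SLE_{\kappa'}(\rho'_L;\rho'_R)$/LQG coupling that depends only on $(\rho'_L,\rho'_R)$. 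The key step is to show that this ratio equals $\sin(-\pi\rho'_L/2)/\sin(-\pi\rho'_R/2)$. We expect this to follow from the mating-of-trees description of $\SLE_{\kappa'}(\rho'_L;\rho'_R)$ on a quantum wedge \cite{dms2014mating}: the left and right boundary-length processes along the trunk are independent $\alpha$-stable L\'evy processes whose total L\'evy-measure weights are proportional to these sines, by combining the Radon--Nikodym derivative relating $\SLE_{\kappa'}(\rho'_L;\rho'_R)$ to ordinary $\SLE_{\kappa'}$ (the force-point martingales) with the standard stable-process expression of the asymmetry parameter as the sine of a suitable angle. Equating the two expressions for $a_{l,+}/a_{r,+}$ then gives the theorem.

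As a consistency check, the candidate formula recovers the three special values $\rho'(1)=0$, $\rho'(-1)=\kappa'-6$, and $\rho'(0)=(\kappa'-6)/2$ computed in \cite{cle_percolations} for the totally asymmetric and symmetric explorations. The main obstacle is the precise identification of the sine prefactors in the trunk-side calculation: although the form of the ratio is strongly constrained both by the stable-process asymmetry-parameter heuristic and by the known special cases, turning this heuristic into a rigorous identification requires careful bookkeeping of the left and right boundary quantum lengths cut out by the trunk and of the marked-point conventions for the corresponding quantum disks. It is noteworthy that, although the conclusion is intrinsic to $\SLE$/$\CLE$ and involves no LQG, we do not know of any $\SLE$-only derivation of this relation.
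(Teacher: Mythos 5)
Your first ingredient is correct and is exactly equation~\eqref{firstrelation} in the paper: the CLE-first point of view immediately gives $a_{l,+}/a_{r,+} = (1-\beta)/(1+\beta)$. The problem is in your second ingredient, where you claim that by sampling the trunk first you can read off $a_{l,+}/a_{r,+}$ as a ratio of ``boundary-touching intensities'' of the $\SLE_{\kappa'}(\rho';\kappa'-6-\rho')$. This is not how the loop-trunk decomposition works: the CLE loops are \emph{not} attached at the boundary-touching events of the trunk, but are traced by the auxiliary $\SLE_\kappa(3\kappa/2-6)$/BCLE curves in the pockets of $\eta'$, i.e.\ continuously along the trunk. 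What \emph{is} encoded in the trunk's boundary-touching structure is the ladder height process (running infimum) of the paper's processes $L$ and $R$, not their positive-jump intensities.

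Concretely, the paper's argument computes the intensity of ladder-height jumps of $L$ (the lengths of the $\R_-$-intervals between successive hits of the trunk) in two ways: once from the stable positivity-parameter formula \cite[Ch.~VIII, Lemma~1]{bertoin96levy}, which gives the exponent $-\alpha P_L - 1$ and the identity $u_L = a_{l,+}/a_{l,-} = \sin(\pi\alpha(1-P_L))/\sin(\pi\alpha P_L)$; and once from the fact that the components cut off to the left of the trunk on $\R_-$ form a wedge $\CW_L$ of weight $\kappa-2+\kappa\rho'/4$, whose Bessel dimension gives the scaling exponent $\delta_L - 3$ directly. Identifying these exponents yields $u_L$ (and likewise $u_R$) as a function of $\rho'$. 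But note that this only produces the \emph{ratios} $u_L = a_{l,+}/a_{l,-}$ and $u_R = a_{r,+}/a_{r,-}$. To deduce $a_{l,+}/a_{r,+} = u_L/u_R$, the paper needs the additional input $a_{l,-} = a_{r,-}$, which is Lemma~\ref{a-a+}, and proving that lemma requires the Radon--Nikodym analysis for CLE explorations of quantum disks in Section~\ref{Scons}. Your proposal never invokes this crucial symmetry, and indeed the idea of reading the positive-jump ratio directly off the trunk cannot work: the trunk and its cut-out wedges only ``see'' the negative jumps and the ladder height, not the loop-attachment intensities themselves. You would need to supply a separate argument that couples the loop intensities to the trunk's left/right wedge weights, and that argument is precisely what the chain positivity-parameter~$\to$~$u_L,u_R$~$\to$~Lemma~\ref{a-a+} provides.
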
 
This therefore completes the full description of the loop-trunk decomposition of these asymmetric CLE exploration mechanisms. 

The  readers acquainted with subtleties of stable processes may find some similarities between this result and the formulas describing aspects of the ladder height processes of stable processes. This is not a coincidence, as we will actually derive Theorem~\ref{thm:beta_rho} using such formulas.

\subsection{Remarks, generalizations, outlook}

\begin{enumerate}
\item
This paper \ww{has a counterpart \cite{mswupcoming} that describes} the structures that one obtains when one draws $\CLE_{\kappa'}$-explorations 
(for $\kappa' \in (4,8)$) on top of the corresponding quantum ``half-planes'' and ``disks''. The results are formally quite similar to the ones of the present paper, but some of the arguments differ (to start with, the half-planes and disks are of a different type). 

\item
The present paper (as well as \cite{mswupcoming}) lays the groundwork for stronger convergence results for scaling limits of discrete models to $\SLE$ on LQG to be made. In some sense, our results show that from the perspective of the continuum models that should appear in the scaling limit when one considers $O(N)$ models on well-chosen planar maps (or related models), the features that allowed physicists to use their {\em quantum gravity ideas} are indeed valid.  It should therefore not be surprising that some of our formulas mirror results that appear in the study of some special planar maps, such as the ones arising in  \cite{BBG1,ccm2017lengths,bbck} (see also 
\cite {budd,curienrichier} for further related results on the planar maps side). This can be explained by the fact that some of the  discrete peeling type processes used in the study of planar maps should indeed give rise to these loops on trunk processes on independent LQG in the scaling limit.
 
\item To be more specific, the results of this article open the doors to using the $\QLE$ approach (\cite{qlebm} and the references therein) to construct two particular metrics inside of the $\CLE$ carpet which correspond to the scaling limit of random planar maps with large faces determined by Le Gall and Miermont \cite{lgm2011large_faces}.  
\end{enumerate}

\subsection*{Acknowledgements}

JM was supported by ERC Starting Grant 804166 (SPRS). SS was supported by NSF award DMS-1712862.  WW was supported by the SNF Grant 175505. He also acknowledges the hospitality of the University of Cambridge, where part of this work has been written. 
We also thank an anonymous referee as well as Nina Holden and Matthis Lehmk\"uhler for very useful comments. 

\section{Background} 
\label{S.Background}

\subsection {Quantum wedges (thick and thin), half-planes and disks}

Let us very briefly survey the definition of the quantum surfaces that we will be using in the present paper.  The discussion that we give here will be somewhat informal because the precise definitions of these surfaces will not in fact be needed in this work.  We refer the reader to \cite[Section~1, Section~4]{dms2014mating} for a more detailed treatment.

One starting point is to consider the GFF in a simply connected domain with free boundary conditions. As this field turns out to be conformally invariant, it is sufficient to define it in $\h$, where it can be viewed as the Gaussian process $h_F$ indexed by the space of bounded measurable functions with compact support and mean zero (w.r.t.\ Lebesgue measure), with covariance given by 
\[ \E [ h_F(\phi) h_F(\psi) ] = \int  \phi(x) \psi(y) \left(\log \frac 1 {|x-y|} + \log \frac 1 {|x-\overline y|} \right) dx dy. \]
One way to interpret the fact that $h_F$ is defined on the set of functions of zero mean, is that $h_F$ is only defined ``up to constants'' (so in fact, the object that is defined and studied is the generalized function $\nabla h_F$). One can of course also define a proper generalized function $h_F$ and call it a GFF with free boundary conditions if the process $(h_F (\phi))$ has the law described above (on the space of functions with zero mean). 

For each choice of $\gamma \in (0,2)$, one can almost surely associate to each such generalized function $h_F$ a measure in the upper-half plane and a measure on the real line (i.e., that is thought of as the boundary of $\h$), for instance via a regularization procedure, that can be interpreted as the measures with densities $\exp (\gamma h_F)$ and $\exp ( \gamma h_F / 2)$ with respect to the area and length measure in the half-plane and the real line respectively. 
Such area and length measures can therefore also be defined for variants $h$ of the free-boundary GFF, such that the law of $h$ is locally absolutely 
continuous with respect to that of $h_F$ (or to that of the sum of $h_F$ with some random continuous function, or that of some randomly scaled version of it). 

The quantum wedges are variants of the free boundary GFF involving two marked boundary points. It is at first sight convenient and natural, when working in $\h$, to take these two boundary points to be $0$ and $\infty$. Loosely  speaking (we will make this more precise in a moment), the quantum wedges correspond to adding a constant times $\log |z|$ to the free boundary GFF.  In this context, keeping in mind that it is the area measure that is conformally covariant, it appears however somewhat more natural to work in a bi-infinite strip (with marked boundary points at both ends of the strip) rather than in $\h$ (so we just take the image of $\h$ under the logarithmic map).  In that setting, when $h_F$ is a free-boundary GFF in the strip $\strip = \R \times (0, \pi)$, 
we can consider $u_F(r)$ to be the mean-value of $h_F$ on the vertical segment $(r, r +i \pi)$. This process $(u(r/2)- u(0))_{r \in \R}$ turns out to \jason{be} a two-sided (i.e., bi-infinite) Brownian motion \jason{normalized to take the value $0$ at time $0$}.
Furthermore, if one defines the function $u_F$  on the strip by $\wt{u}_F (x+iy) = u(x)$, then the process $v_F := h_F - \wt{u}_F$ is independent of $u_F$. In other words, adding a given function $f( \re (z))$ amounts to twisting $u_F$ only and to leave $v_F$ unchanged.  

In order to define the wedge variant, one essentially just has to choose $u_F$ to be a Brownian motion with drift instead of a Brownian motion. However, it is well-known that a bit of care is needed when one defines a two-sided Brownian motion with drift\footnote{One usual way to define a two-sided Brownian motion with drift is to declare that time $0$ is the first time at which the Brownian motion with drift hits $0$.  Then, for positive times, the process is just a Brownian motion with drift, while on the negative side, it is Brownian motion with drift and conditioned not to hit the origin.} and one also needs to keep in mind that the free boundary GFF is defined only up to an additive constant.  

Let us briefly recall how to make sense of measures on one-dimensional drifted Brownian motion paths, starting from $- \infty$ at time $- \infty$. Note that here, we are somehow trying to make sense of measures on unparameterized paths. Since the Brownian motion's quadratic variation is constant, it is always possible to recover the difference between two times on the trajectory, but there is still one degree of freedom (for instance one can decide which point on the trajectory corresponds to time $0$).

(i) When the drift $a$ is positive, the natural measure on drifted Brownian paths is a probability measure, and the path will come from $-\infty$ (at time $-\infty$) and go to $+\infty$ (at time $+ \infty$). 
This makes it possible to choose time $0$ to be the first time at which the path hits $0$. In that case, $(f(-t))_{t \ge 0}$ and $(f(t))_{t\ge 0}$ will be independent, the latter being just a Brownian motion 
with drift $a$, while the former will be a Brownian motion with drift $a$, but conditioned to never hit $0$ (and there are several equivalent easy ways to make sense of this). 
An equivalent way to define this process is to use the fact that drifted Brownian motion can be viewed as a time-changed Bessel process \jasonr{(see, e.g., \cite[Proposition~3.4]{dms2014mating})}. Here (with the positive drift), this means that one can start 
with a Bessel process $Y$ of dimension $\delta$ greater than $2$ (which is a process that starts from $0$ at time $0$, never hits $0$ again, and tends to $\infty$ as time goes to $\infty$), and to view 
the drifted Brownian motion as a time-change of $\exp (Y)$ (the time-change ensures that the quadratic variation of the obtained process is constant, and the scaling property of the Bessel process 
then ensures that the obtained process behaves like a drifted Brownian motion). Then, again, one can choose where time $0$ lies based on this drifted Brownian trajectory itself, for instance the first time at which it hits $0$. 
For each positive value of $a$, one can then simply define the {\em thick quantum wedge}
to be the surface obtained by adding the field $v_F$ to this drifted Brownian motion. It defines an area measure 
in the bi-infinite strip, it has two special boundary points $- \infty$ and $+ \infty$, and the area measure as well as the boundary length measure are finite in the neighborhood of $-\infty$ but infinite in the neighborhood of $+ \infty$. 

(ii) It is also natural to define measures on bi-infinite Brownian paths coming from $-\infty$ but with {\em negative} drift. This gives rise to infinite measures of Brownian-type paths that tend to $-\infty$ when time goes to $-\infty$ and also when time goes to $+ \infty$. One can for instance define this as the appropriately renormalized limit when $M \to -\infty$ of the law $P_M$ of Brownian motion with negative drift $a$ started from $M$. For this limit to have a limit, one has to renormalize this probability measure $P_M$ by (for instance) the probability that this paths hits $0$. In other words, one is looking at 
the excursion measure away from $-\infty$ by the negatively drifted Brownian motion. 
The same alternative description via Bessel processes as above turns out to be handy as well. This time, the Bessel processes will have a dimension $\delta \in (0,2)$ and the infinite measure that one starts with 
is simply the infinite measure on excursions away from $0$ by this Bessel process.  
In any case, this leads to an infinite measure on surfaces with finite area and finite boundary length. The obtained measure is then self-similar in the sense that the measure on quantum surfaces that one 
obtains by multiplying the area measure by a given constant will be a multiple of the initial measure (the scaling factor can be read off from the drift of the Brownian motion or from the dimension of the Bessel process). 
This makes it then very natural to consider an ordered bi-infinite Poisson point process of quantum surfaces with intensity given by this infinite measure. The obtained infinite chain of quantum surfaces is then 
called a {\em thin quantum wedge} -- to define it, one can therefore start with a bi-infinite Bessel process of dimension $\delta \in (0,2)$ (that is equal to $0$ at time $0$) and defined on both positive and negative times. 

The \jasonr{relationship} between the dimension $\delta$ of the Bessel process and the so-called weight $W$ of the quantum wedge (one can view this formula as a definition of the weight $W$) is 
\begin{equation}
\label{eqn:bessel_dim_wedge_weight}
\delta =  1 + \frac {2W}{\gamma^2}
\end{equation}
\jasonr{(see, e.g., \cite[Table~1.1]{dms2014mating}).}  The threshold between thin and thick wedges is then at $W=\gamma^2/2$, i.e.,  $\delta=2$.

It is well-known that if one ``conditions a Brownian motion with positive drift $a$ to tend to $-\infty$'' (it is easy to make rigorous sense of this conditioning), then one obtains a Brownian motion with the negative drift $-a$. This then corresponds to the classical relation between Bessel processes of dimension $\delta$ and $4-\delta$ (a Bessel process of dimension $\delta > 2$ ``conditioned to hit $0$'' will be a Bessel process of dimension $4 - \delta$). In terms of wedges, this corresponds to a natural ``duality'' relation between the thick quantum wedge of weight $W \in (\gamma^2 /2 , \gamma^2)$ and the beads of a thin quantum  wedge of weight $W'= \gamma^2 - W$. We will come back to this later.

There are two special quantum surfaces that will play an important role here: 

\begin{itemize}
\item For each $\gamma \in (0, 2)$, there is one very special thick quantum wedge for $W=2$, that we will here call a quantum half-plane. Roughly speaking, this is the case where the boundary point $-\infty$ in the strip is not a particularly special boundary point of the quantum surface. For instance, when one samples such a quantum half-plane, and one chooses any fixed positive real $b$, one can define the point $c$ on the bottom part of the boundary such that the boundary length of the half-line to the left of $c$ is exactly equal to $b$. Then, one can consider a conformal map $\phi$ from the strip onto itself that maps $c$ to $-\infty$ and keeps $+ \infty$ unchanged (this map $\phi$ is then defined up to a horizontal translation). The special feature of the quantum half-plane is that the law of the obtained surface is again that of a quantum half-plane \cite{SHE_WELD,dms2014mating}. 
\item For each $\gamma \in (0,2)$, among the measures on surfaces with finite area (beads of thin wedges, corresponding to excursions of Bessel processes), there is also one for which neither $-\infty$ nor $+\infty$ are particularly special boundary points -- this is the case where $W= \gamma^2 -2$, a bead of which we refer to as a quantum disk. Let us first define a simple operation on quantum surfaces as follows: Choose two boundary points $c$ and $c'$ independently according to the boundary length measure (renormalized to be a probability measure), and then consider a map $\phi$ from the strip onto itself, that maps $c$ and $c'$ onto $-\infty$ and $+ \infty$ respectively (again this map is defined up to a horizontal translation). Then, when one applies this procedure, the measure on marked quantum surfaces induced by the Bessel excursions is invariant \cite[Proposition~A.8]{dms2014mating}.  By scale invariance, it is possible to decompose this measure according to the total boundary length of the obtained surface, and to define a probability measure on quantum disks with a prescribed boundary length \cite[Section~4.5]{dms2014mating}. (We remark that at this stage, the wedge has a negative weight when $\gamma < \sqrt {2}$ -- but we will anyway use only wedges with positive weight in the present paper).  
\end{itemize}

Note that the weight $\gamma^2 - 2$ of the quantum disks and the weight $2$ half-planes are related by the above-mentioned ``duality relation'' $W+W' =\gamma^2$.

If we parameterize a quantum wedge (or a bead of a thin quantum wedge) by $\h$ and want to emphasize that the marked points are at $0$ and $\infty$, we will use the notation $(\h,h,0,\infty)$.  Similarly, if we parameterize it by $\strip$ and want to emphasize that the marked points are at $-\infty$ and $+\infty$, we will use the notation $(\strip,h,-\infty,+\infty)$.

\begin {remark}
Due to the LQG conformal covariance, there are several 
natural ways to actually define the quantum disk (i.e., to choose the domain $D$ in which one defines it as well as the actual normalization one uses among all the conformal automorphisms of $D$).  If one chooses the reference domain to be the infinite strip $\R \times (0,\pi)$ and chooses the embedding so that two boundary typical points are taken to $\pm \infty$, then one obtains the definition of the quantum disk developed in \cite{SHE_WELD,dms2014mating}.  If one alternatively chooses to fix the embedding using three points, then the definition one obtains is as described in \cite{hrv2018disk}. 
 A proof of the equivalence between these definitions can be found in \cite{c2019disk}. This mirrors the similar story for the definitions of LQG spheres, where the approaches developed in \cite{SHE_WELD,dms2014mating} and in \cite{lqg_sphere} were proven to be equivalent in \cite{ay2017twoperspectives}.  In the present article, we will use the version of the disk with two marked points as it is amenable to $\SLE$ techniques (the two points corresponding to the seed and target of the chordal $\SLE$-type curve).
 \end {remark}

 \begin {remark} 
 \label {beadboundaries}
At some point in this paper, it will be useful to work with the laws of a bead of a thin quantum wedge {\em conditioned} on its left and right boundary lengths (these are the quantum lengths of the parts of its boundary between the two marked point). The fact that this conditioning makes sense can be easily worked out from the orthogonal decomposition of the field in the bead. In other words, one can decompose the infinite measure on beads into a measure on its left and right boundaries, and then use the probability measure describing the law of the bead when conditioned on the lengths of its left and right boundaries. 
 \end {remark}

\subsection{$\SLE$ explorations of quantum surfaces}

We will now recall some of the basic welding operations which are proved in \cite{dms2014mating} and which we will make use of later in the proofs of our main results.  The first result is regarding the case of welding quantum wedges along their boundaries (or equivalently cutting with an independent $\SLE_\kappa$-type curve):

\begin{theo}[Theorems~1.2 and~1.4 from \cite{dms2014mating}]
\label{thm:wedge_welding}
\hskip 1cm
\null 
\begin{enumerate}[(i)]
\item Let $\CW = (\h,h,0,\infty)$ be a thick quantum wedge   of weight $W$.  Fix $\rho_1,\rho_2 > -2$ with $\rho_1 + \rho_2 + 4 = W$.  Let $\eta$ be an independent $\SLE_\kappa(\rho_1;\rho_2)$ process in $\h$ from $0$ to $\infty$.  If we let $\CW_1$ (resp.\ $\CW_2$) be the quantum surface which is parameterized by the part of $\h \setminus \eta$ which is to the left (resp.\ right) of $\eta$, then $\CW_1,\CW_2$ are independent quantum wedges with weights $W_1= \rho_1 + 2$ and $W_2 = \rho_2 +2 $. Furthermore, $\eta$ and $\CW$ are both almost surely determined by $\CW_1$ and $\CW_2$.
\item The same result holds true if $\CW$ is a thin quantum wedge of weight $W$ except that $\eta$ is defined to be a concatenation of independent $\SLE_\kappa(\rho_1;\rho_2)$ processes, one for each bead of $\CW$.
\end{enumerate}
\end{theo}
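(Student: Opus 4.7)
The plan is to follow the quantum zipper/welding program of Sheffield and DMS; I treat (i) in detail and obtain (ii) from (i) bead-by-bead. For the base case I would take as input Sheffield's original welding theorem in the setting $\rho_1=\rho_2=0$ (so $W=4$): cutting a weight-$4$ quantum wedge parameterized by $(\h,h,0,\infty)$ by an independent chordal $\SLE_\kappa$ produces two independent quantum half-planes (weight $2$ each); the quantum boundary length along $\eta$ is well defined from either side, the two sides agree, and $\eta$ and the original wedge are a.s.\ determined by the pair of half-planes.

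For general $\rho_i>-2$ with $\rho_1+\rho_2+4=W$, I would realize the joint law of $(\CW,\eta)$ as a Girsanov-type tilt of this base case by adding to the field a deterministic log-singular drift at $0$ and $\infty$ (in the strip coordinate this is a linear function of the horizontal coordinate with prescribed slopes). On the wedge side the drift alters the underlying two-sided Brownian motion encoding the wedge, converting the weight-$4$ wedge into a weight-$W$ wedge. By the imaginary-geometry coupling between the GFF and $\SLE_\kappa$, the same drift converts $\SLE_\kappa$ into $\SLE_\kappa(\rho_1;\rho_2)$: the two force points at $0^\pm$ are exactly the sources of the two log-singularities, with strengths determined by $\rho_1,\rho_2$. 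The crucial point is that the tilt factorizes across the two sides of $\eta$, so independence of $\CW_1$ and $\CW_2$ is preserved, and on each side the tilt turns a weight-$2$ half-plane into a weight-$(\rho_i+2)$ wedge. The marginal weights are identified via the characterisation of wedges by the scaling of the boundary length measure near the marked points, and the measurability statement ($\eta,\CW$ are a.s.\ functions of $\CW_1,\CW_2$) transfers from the base case by absolute continuity.

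For (ii), a thin quantum wedge of weight $W<\gamma^2/2$ is a Poissonian chain of beads coming from the Bessel excursion measure; inside each bead the cut is a conditional version of (i) (condition on the left and right boundary lengths, cf.\ Remark~\ref{beadboundaries}), and the Poissonian structure together with the scale invariance of the excursion measure assembles the per-bead outputs into two independent thin wedges of weights $W_1,W_2$. The main obstacle throughout is the measurability assertion: conformal welding is generically ill-posed, and to upgrade the welded pair $(\CW_1,\CW_2)$ back to a canonical $(\CW,\eta)$ one must exploit the rigidity of the $\gamma$-LQG boundary length measure on each side (non-atomicity, H\"older-type regularity, and exact length matching along the interface). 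This is the analytic heart of the DMS framework and is where the restriction $\gamma\in(0,2)$ is essential.
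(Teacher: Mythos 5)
This statement is imported verbatim from \cite{dms2014mating} (Theorems~1.2 and~1.4 there) and is \emph{not} proved in the present paper; it is cited as background, so there is no in-paper argument to compare against. Your sketch does recall some genuine ingredients of the DMS proof (the quantum zipper base case, treatment of beads via conditioning on boundary lengths, and the role of conformal removability -- which is what you are gesturing at with ``rigidity'' and ``H\"older-type regularity'' of the LQG boundary length), but the central reduction you propose has a real gap.

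The issue is the ``Girsanov-type tilt'' from the weight-$4$ case to general weight $W$. Adding a $\log$-singular drift at $0$ and $\infty$ is not a Cameron--Martin shift of the free-boundary GFF (the singularity is not in $H^1$), and correspondingly quantum wedges of different weights are mutually \emph{singular} laws, encoded by Bessel processes of different dimensions. So there is no global Radon--Nikodym derivative to ``factorize across the two sides of $\eta$,'' and the independence of $\CW_1$ and $\CW_2$ -- which is the substantive content of the theorem -- cannot be inherited from the $W=4$ case by absolute continuity. (Local absolute continuity away from the marked points does hold, and DMS exploit this, but upgrading it to the global statement about wedge laws requires the full zipper/re-rooting machinery, not a drift change.) Separately, the sentence ``the same drift converts $\SLE_\kappa$ into $\SLE_\kappa(\rho_1;\rho_2)$'' conflates the drift on the LQG field $h$ with the drift on the auxiliary imaginary-geometry field; these are independent objects in the coupling, and their force-point strengths are tied together by the weight identity $\rho_1+\rho_2+4=W$ only after a nontrivial computation, which your sketch should make explicit rather than assume. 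If you want a proof of this result, I would refer you directly to the arguments in \cite{dms2014mating} (and to \cite{SHE_WELD} for the zipper input) rather than attempt to reconstruct them by tilting.
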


We will in fact mostly be using statement (i) in the case where $\rho_2 = 0$, in which case $\CW_2$ is a quantum half-plane. Note that for statement (ii) to hold for a thin 
wedge, the formula $\rho_1 + \rho_2 + 4 = W$ will require $\rho_1$ {\em and } $\rho_2$ to be negative. 

\medbreak

The second result that we will restate is the analogous result for $\SLE_{\kappa'}(\rho_1;\rho_2)$ processes for $\kappa' \in (4,8)$: 

\begin{theo}[Theorem~1.17 from \cite{dms2014mating}]
\label{thm:sle_kp_rp}
Assume that $\gamma \in (\sqrt{2},2)$.  Fix $\rho_1,\rho_2 \geq \kappa'/2-4$, let $W_i = \gamma^2 -2 + ({\gamma^2 \rho_i / 4})$ for $i=1,2$, and $W = W_1 + W_2 + 2-\gamma^2/2$.  Suppose that $\CW = (\h,h,0,\infty)$ is a quantum wedge of weight~$W$.  Let~$\eta'$ be an independent $\SLE_{\kappa'}(\rho_1;\rho_2)$ process in~$\h$ from~$0$ to~$\infty$.  Then the quantum surface parameterized by the components which are to the left (resp.\ right) of $\eta'$ is a quantum wedge of weight $W_1$ (resp.\ $W_2$) and the quantum surfaces parameterized by the components which are completely surrounded by $\eta'$ are all quantum disks given their boundary lengths. 	
\end{theo}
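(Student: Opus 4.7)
The plan is to derive Theorem~\ref{thm:sle_kp_rp} by combining the $\SLE_\kappa$ welding result (Theorem~\ref{thm:wedge_welding}) with the imaginary geometry description of $\SLE_{\kappa'}(\rho_1;\rho_2)$ in terms of its outer boundary flow lines, and then by identifying the bubbles cut off by $\eta'$ as quantum disks via a scale invariance/stable process argument.

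First I would couple $\eta'$ with a GFF on $\h$ via imaginary geometry and realize its left and right outer boundaries $\eta^L,\eta^R$ as $\SLE_\kappa$-type processes with force points, the $\rho$-parameters being determined by the usual angle prescriptions relating counterflow lines to flow lines. The hypothesis $\rho_i\ge\kappa'/2-4$ ensures that these outer boundary curves are non-degenerate curves from $0$ to $\infty$ that bound $\eta'$ on either side, so that $\h\setminus\eta'$ decomposes into a left exterior piece, a right exterior piece, and a countable collection of bubbles surrounded by $\eta'$.

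Next I would apply Theorem~\ref{thm:wedge_welding}(i) twice, cutting $\CW$ along $\eta^L$ and along $\eta^R$. The $\SLE_\kappa$ welding rule $W=\rho_1+\rho_2+4$, combined with the imaginary-geometry conversion between the outer-boundary $\rho$'s and the original $\rho_1,\rho_2$, produces the claimed weights $W_i=\gamma^2-2+\gamma^2\rho_i/4$ for the left and right outer components. The residual weight of the middle region between $\eta^L$ and $\eta^R$ is $W-W_1-W_2=2-\gamma^2/2$; via the relation $\delta=1+2W/\gamma^2$ from~\eqref{eqn:bessel_dim_wedge_weight} this corresponds to Bessel dimension $\delta=4/\gamma^2\in(1,2)$, placing the middle region in the thin-wedge regime, with beads that must be matched with the bubbles enclosed by $\eta'$.

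The final step is to identify each bead/bubble, given its boundary length, with a sample from the quantum disk law. Both families are indexed Poissonianly by boundary length, and in each case the single-bead law is pinned down by scale invariance together with a two-marked-point boundary-length decomposition (compare Remark~\ref{beadboundaries} for the disk side). To fix the normalization I would analyze the boundary-length process of the unexplored region as $\eta'$ is traced: by conformal and scale invariance this is a one-sided $\alpha$-stable process with $\alpha=4/\kappa$, whose L\'evy measure has density proportional to $dL/L^{\alpha+1}$, matching $M=\int P_L\,dL/L^{\alpha+1}$ from the introduction and identifying each bubble as a quantum disk of the prescribed boundary length. The main obstacle is precisely this identification: for $\kappa'\in(4,8)$ the curve $\eta'$ is non-simple and its bubbles accumulate densely along the trunk, so verifying that the weld-back operation recovers the correct \emph{joint} law of the bubbles (rather than only the correct marginals) is delicate. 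I would handle it through the mating-of-trees encoding of the $\SLE_{\kappa'}/$quantum-wedge pair, which encodes the whole configuration as a pair of correlated Brownian motions and thereby makes the conditional independence of the bubbles (given their boundary lengths) a direct consequence of the strong Markov property of the driving processes.
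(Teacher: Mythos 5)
This statement is recorded in the paper as Theorem~1.17 of \cite{dms2014mating}; the paper does not reprove it, it imports it as a black box. So there is no ``paper's own proof'' to compare against --- what you have written is an independent reconstruction of the Duplantier--Miller--Sheffield argument, and it should be judged on its own terms.

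Your outline of the first part is essentially right: realize the left and right outer boundaries $\eta^L,\eta^R$ of $\eta'$ as imaginary-geometry flow lines, cut the weight-$W$ wedge along $\eta^L$, then cut the remaining piece along $\eta^R$ (using the conditional law of $\eta^R$ given $\eta^L$), and read off the weights $W_1$, $W_2$, and $W-W_1-W_2 = 2-\gamma^2/2$ from the additive welding rule of Theorem~\ref{thm:wedge_welding}. That is in fact how the side pieces are handled in \cite{dms2014mating}.

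There are two genuine gaps in the remainder. First, the sentence ``with beads that must be matched with the bubbles enclosed by $\eta'$'' is not correct: a bead of the middle thin wedge is bounded by two successive touch points of $\eta^L$ and $\eta^R$ (equivalently, cut points of $\eta'$), and within a single bead the curve $\eta'$ winds around and encloses \emph{countably many} of the surrounded bubbles. The bead weight $2-\gamma^2/2$ is not the quantum disk weight $\gamma^2-2$ (these coincide only at $\kappa=8/3$), which is another signal that beads and bubbles live at different ``levels'' of the decomposition. So an additional layer of argument is needed inside each bead before one can speak of quantum disks. Second, the final paragraph defers the conditional-independence-and-quantum-disk claim to ``the mating-of-trees encoding \ldots{} and the strong Markov property of the driving processes.'' That encoding (Theorem~1.9 and the Brownian motion description in \cite{dms2014mating}) is itself the heavy lifting of that paper, and the statement you are trying to prove (together with its sibling Theorem~1.18, restated here as Theorem~\ref{thm:sle_kp_explore}) is part of that package: invoking the encoding as an established fact while proving this theorem is close to circular, or at least not a self-contained argument. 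Scale invariance and the $\alpha$-stable form of the boundary-length L\'evy measure pin down the \emph{boundary length law} of the bubbles but not the conditional law of the full quantum surface given that boundary length, which is what the theorem actually asserts. To close the gap one needs either the quantum zipper re-rooting/resampling machinery of \cite{SHE_WELD,dms2014mating} or an argument like the one the present paper gives for its own Lemmas~\ref{qdright}--\ref{qdrighttwo} and Lemma~\ref{lem:disk_limit}, neither of which is sketched here.
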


This makes it in particular quite natural to draw an $\SLE_{\kappa'}$ on top of a quantum wedge of weight $3 \gamma^2 /2 - 2$. 
In that case, let us recall the boundary length evolution description: 

Assume that $\gamma \in (\sqrt{2},2)$.  Suppose that $\CW = (\h,h,0,\infty)$ is a quantum wedge of weight $W = 3\gamma^2/2 - 2$.  Let $\eta'$ be an independent $\SLE_{\kappa'}$ in $\h$ from $0$ to $\infty$. We parameterize $\eta'$ by its quantum length (induced by $\CW$).  For each $t \geq 0$, we let $L_t$ (resp.\ $R_t$) denote the change in the left (resp.\ right) side of the outer boundary of $\h \setminus \eta'([0,t])$ relative to time $0$ (i.e., $L_0 = R_0 = 0$).  Note that each downward jump of $L$ (resp.\ $R$) corresponds to a component separated from $\infty$ by $\eta'$ at the corresponding time and the length of the jump gives the quantum boundary length of the disconnected region.  Recall from~\eqref{eqn:basic_relations} that $\alpha = 4/\kappa = \kappa'/4$.

\begin{theo}[Theorem~1.18 and Corollary~1.19 from \cite{dms2014mating}]
\label{thm:sle_kp_explore} 
\hskip 1cm
\begin{enumerate}[(i)]
\item The processes $L$ and $R$ are independent $\alpha$-stable L\'evy processes. This describes in particular the law of the jumps   Moreover, the quantum surfaces parameterized by the components are conditionally independent quantum disks given their boundary lengths. 
\item Furthermore, for each $t \geq 0$, we can consider the unbounded connected component $H_t$ of the complement of $\eta' [0,t]$, and view this as a quantum surface. We can also consider the complement $K_t$ of $H_t$ in $\h$, and also view it as a quantum surface. Then $H_t$ is a quantum wedge of weight $3 \gamma^2 /2 - 2$, that is independent of the quantum surface $K_t$ decorated by $\eta'$ up to time~$t$. 
\item Finally, $\CW$ and $\eta'$ are a.s.\ determined by the boundary length processes $L,R$ and corresponding collection of quantum disks, each marked by the first point on their boundary visited by $\eta'$.
\end{enumerate}
\end{theo}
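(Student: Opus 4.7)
The plan is to derive this theorem as the specialization of Theorem~\ref{thm:sle_kp_rp} to the case $\rho_1 = \rho_2 = 0$. Plugging in gives $W_1 = W_2 = \gamma^2 - 2$ and $W_1 + W_2 + 2 - \gamma^2/2 = 3\gamma^2/2 - 2 = W$, so the setup matches. Since $\gamma \in (\sqrt{2},2)$ implies $\gamma^2 - 2 < \gamma^2/2$, the left and right complements of $\eta'$ are \emph{thin} wedges of weight $\gamma^2-2$, and each bead of such a thin wedge is, by definition, a quantum disk (conditioned on its boundary length). This already yields the quantum-disk and conditional-independence assertions in part~(i).

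To identify $L$ and $R$ as $\alpha$-stable L\'evy processes, note that every downward jump of $L$ corresponds to a bead of the left thin wedge cut off by $\eta'$, and the beads of a thin wedge of weight $\gamma^2-2$ form a Poisson point process (in the natural wedge parameterization) whose intensity, when decomposed by boundary length $\ell$, is a constant multiple of $\ell^{-\alpha-1}\,d\ell$ with $\alpha = 4/\kappa$; this is exactly the natural infinite measure on quantum disks appearing in the introduction. The main step here is to show that the quantum-length parameterization of $\eta'$ and the canonical parameterization of the thin-wedge beads agree up to a linear time change. Together with the scaling invariance of the weight-$(3\gamma^2/2-2)$ wedge under shifts of the field by a constant, this forces the joint (time, jump-size) intensity to be $c\,dt\otimes \ell^{-\alpha-1}\,d\ell$, i.e., $L$ is $\alpha$-stable. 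Independence of $L$ and $R$ is inherited from the independence of the two thin wedges given by Theorem~\ref{thm:sle_kp_rp}.

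For part~(ii), I would combine the target-independence and conformal Markov property of SLE$_{\kappa'}$ with the defining scale-invariance of the weight-$(3\gamma^2/2-2)$ wedge. Given a stopping time $t$ for the quantum-length-parameterized filtration of $(L,R)$, the curve $\eta'$ restarted from $\eta'(t)$ is a conditionally independent SLE$_{\kappa'}$ targeting $\infty$ inside $H_t$; combining this with local absolute continuity of the restriction of the field to $H_t$ with respect to a free-boundary GFF (up to the LQG change-of-coordinates rule) and with the matching of quantum boundary lengths on both sides of $\eta'(t)$, one identifies $H_t$ as a quantum wedge of weight $3\gamma^2/2-2$, independent of the decorated surface $K_t$. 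Part~(iii) is then a welding/measurability assertion: given $L$, $R$, and the ordered sequence of marked quantum disks, one reconstructs both $\CW$ and $\eta'$ by welding the disks back onto the trunk, which is the content of the welding direction of Theorems~\ref{thm:wedge_welding} and~\ref{thm:sle_kp_rp}.

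The main obstacle in this plan is the precise identification in part~(i) of the L\'evy intensity, which requires pinning down how the quantum-length parameterization of the SLE$_{\kappa'}$ curve is related to the position parameterization along the thin wedge, and also matching the scaling exponent of the quantum length of SLE$_{\kappa'}$ under field shifts with the boundary-length scaling. Once these linear/scaling relations are established, the $\alpha$-stability with $\alpha = 4/\kappa = \kappa'/4$ follows from the exact form of the natural infinite measure on quantum disks together with the scale invariance of the weight-$(3\gamma^2/2-2)$ wedge.
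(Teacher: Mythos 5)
This statement is quoted verbatim from \cite{dms2014mating} (Theorem~1.18 and Corollary~1.19 there) and is not proved in the present paper, so there is no internal proof to compare against; what follows assesses your sketch on its own merits. Your reduction to Theorem~\ref{thm:sle_kp_rp} with $\rho_1=\rho_2=0$ gets the weight arithmetic right and correctly identifies beads of the flanking weight-$(\gamma^2-2)$ thin wedges as quantum disks. However, the claim that ``every downward jump of $L$ corresponds to a bead of the left thin wedge'' is wrong: the beads of the left thin wedge in Theorem~\ref{thm:sle_kp_rp} are precisely the components of $\h\setminus\eta'$ whose boundaries meet $\R_-$, whereas $L$ jumps at \emph{every} component disconnected on the left, most of which are completely surrounded by $\eta'$ and never touch $\R$ (the paper flags exactly this point with the ``forested wedge'' remark in Section~\ref{S32}). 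Consequently the Bessel-excursion parameterization of those beads and the quantum-length parameterization of $\eta'$ are \emph{not} related by a linear time change, and the step you label as the ``main step'' cannot be carried out as stated. There is also a circularity to watch: as Remark~\ref{GQLRemark} points out, the quantum length of a non-simple $\SLE_{\kappa'}$ curve is in this paper essentially \emph{characterized} by the requirement that $L$ and $R$ be stationary with independent increments, so ``$\alpha$-stable L\'evy'' decomposes into a scaling statement (which does follow from scaling of the wedge) plus the Markov/independent-increments statement, and the latter is exactly the nontrivial content of the DMS quantum zipper, not a consequence of Theorem~\ref{thm:sle_kp_rp}.

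Parts (ii) and (iii) are likewise not easy corollaries of the welding statements you cite. The independence in (ii) of the quantum wedge $H_t$ from the decorated surface $K_t$ is the quantum Markov property of the exploration and is one of the central theorems of \cite{dms2014mating}; ``target-independence plus local absolute continuity'' does not establish it. And (iii) is a conformal-removability/measurability assertion: Theorem~\ref{thm:sle_kp_rp}, as stated here, contains no claim that $\CW$ and $\eta'$ are determined by the pieces (Theorem~\ref{thm:wedge_welding}-(i) does assert this, but only for the simple $\SLE_\kappa$ welding, not for the $\kappa'$ welding with enclosed disks). In short: the theorem is an external input, and your sketch omits or misstates the hardest steps of the DMS proof.
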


\begin {remark}[About the  quantum length of the non-simple $\SLE_{\kappa'}$-type curves] 
\label {GQLRemark}
In Theorem \ref {thm:sle_kp_explore}, we used the quantum length of the $\SLE_{\kappa'}$ curve $\eta'$. It is worth making a few comments about the definition and properties of quantum lengths of non-simple SLE curves since they feature also in the statements of our main theorems. Recall first that the definition of the the quantum length of a simple $\SLE_\kappa$ curve (and its variants) follows from the definition of the boundary length for GFFs with Neumann-type boundary conditions via the quantum zipper properties as initiated in \cite {SHE_WELD}. 
One way to view/define the quantum length of an $\SLE_{\kappa'}$-type curve is in fact precisely that it is the parameterization for which the processes $R$ and $L$ in Theorem \ref {thm:sle_kp_explore} are stationary with independent increments. We can note that the jumps of $R$ and $L$ correspond to the boundary lengths of the cut-off domains, which have (simple) $\SLE_{\kappa}$-type boundaries. In particular, it then follows immediately from the theorem that if one shifts the GFF in such a way that all (usual) quantum lengths of simple $\SLE_\kappa$ curves are multiplied by a factor $u$, then the quantum length of the (non-simple) $\SLE_{\kappa'}$ will actually be multiplied by a factor $u^\alpha$.

Another way to describe this is to say that the quantum length of a piece of $\SLE_{\kappa'}$ curve is (up to a multiplicative constant factor) the limit as $\eps \to 0$ of $\eps^\alpha$ times the number of cut out pieces (along that portion of curve) of boundary length in $[\eps, 2 \eps]$. This type of description is now ``local'' and can therefore be used for other $\SLE_{\kappa'}$-type curves in other $\gamma$-quantum surfaces than the wedge $\CW$. 

A final remark is that there is no clear canonical normalization for quantum lengths of $\SLE_{\kappa'}$-type curves, but for the purposes of the present paper, we do in fact not need to care about this, since all our results will anyway be unchanged when one changes all these quantum lengths by the same mulitplicative constant. 
\end {remark}

\subsection{The loop-trunk decomposition of $\SLE_\kappa^\beta(\kappa-6)$}
\label{subsec:trunk_construction}

Let us make a few additional details about the  loop-trunk decomposition of the $\SLE_\kappa^\beta(\kappa-6)$ processes, which Theorem~\ref{thm:cle_perc_trunk} is part of.

The results of \cite[Section~7]{cle_percolations} do in fact provide also 
 the description of the conditional law of the collection of CLE loops encountered by the process, given its trunk, in terms of \emph{boundary conformal loop ensembles} ($\BCLE$).   

 Let us briefly detail some aspects this conditional law that will be used in the present work: 
Suppose that $\kappa \in (8/3, 4)$ and that $\beta=1$. Then, it is shown in \cite[Section~8]{cle_percolations} that 
the $\SLE_\kappa (\kappa -6)$ process (from $0$ to $\infty$ in $\h$) is a continuous curve $\eta$. Each of the excursions away from $0$ made by the Bessel process 
used to define this $\SLE_\kappa (\kappa -6)$ process corresponds to a closed simple loop made by $\eta$ -- which can be viewed as the loops in a CLE$_\kappa$. If one excises all these loops from $\eta$, one obtains a (non-simple) curve $\eta'$ from $0$ to $\infty$, which is the trunk of $\eta$. The law of this 
trunk is shown in \cite[Section~7--9]{cle_percolations} to be an $\SLE_{\kappa'} ( \kappa' -6)$. Let us stress here the following point that we have not mentioned so far: As opposed to the other SLE$_\kappa(\rho)$ processes that we discuss in the present paper, this particular process $\eta'$ has its marked point ``to the right of its tip'' and not to its left. In other words, it is an SLE$_{\kappa'} (0, \kappa'-6)$ process. 

Suppose now that $\tau'$ is a finite stopping time for the trunk (this stopping time can use some additional randomness 
that does not come from $\eta$, we will typically later take $\tau'$ to be the first 
time at which the quantum length of $\eta'$ reaches a certain value). This time $\tau'$ then almost surely corresponds to a single time $\tau$ for $\eta$. 
We now have a first concrete description of $\eta [0,\tau]$: Run the $\SLE_\kappa (\kappa -6)$ until time at which its trunk time is $\tau'$. 

One main result of \cite{cle_percolations} is the following alternative description: Run first the trunk alone until time $\tau'$, and then use the description of 
the conditional law of $\eta$ given $\eta'[0,\tau']$ that is given in \cite{cle_percolations} in terms of boundary conformal loop ensembles. One consequence of 
this description is the following: 

\begin{prop} [\cite{cle_percolations}]
\label{looptrunkprop}
The conditional law of the unbounded connected component of $\HH \setminus \eta[0,\tau]$ given $\eta'[0, \tau']$ can be obtained by running an $\SLE_\kappa (3 \kappa /2 - 6)$ process $\eta''$ from the right-most intersection 
point $x$ of $\eta_{[0 , \tau']}'$ with the real line, to $\eta'_{\tau'}$  (with marked point at its starting point). This component is then distributed as the unbounded connected component of 
$\HH \setminus (\eta'[0, \tau'] \cup \eta''$).
\end{prop}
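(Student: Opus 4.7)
The plan is to invoke the detailed loop-trunk description from \cite{cle_percolations} and to identify the ``right outer boundary'' of $\eta[0,\tau]$ (as seen from $\infty$) with a specific $\SLE_\kappa$-type curve. Since the unbounded component of $\h\setminus\eta[0,\tau]$ is completely determined by this outer right boundary (together with $\eta'[0,\tau']$ and the portion of $\R$ not yet disconnected from $\infty$), it will suffice to show that, conditionally on $\eta'[0,\tau']$, this outer boundary has the law of an $\SLE_\kappa(3\kappa/2-6)$ from $x$ to $\eta'_{\tau'}$ inside the unbounded component $D_{\tau'}$ of $\h\setminus\eta'[0,\tau']$, with marked point at its starting point.

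First, I would recall from \cite{cle_percolations} the explicit conditional law of the $\CLE_\kappa$ loops given the trunk in the totally asymmetric case $\beta=1$: inside each complementary component of the trunk on its right-hand side, the $\CLE_\kappa$ loops form an independent $\cwBCLE$ of a specific parameter $\rho^{*}$, determined so that the nested $\CLE_\kappa$ iteration recovers the $\CLE_\kappa$ law. Restricted to the unbounded component $D_{\tau'}$, this means that the $\CLE_\kappa$ loops in $D_{\tau'}$ form such an independent $\cwBCLE_\kappa(\rho^{*})$, and the loops already drawn by $\eta$ at time $\tau$ are exactly its true (i.e., boundary-touching) loops along the arc of $\partial D_{\tau'}$ running from $x$ up to $\eta'_{\tau'}$ on the right side of the trunk.

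Next, I would use the $\SLE_\kappa(\rho)$-based construction of boundary $\CLE$'s: such a $\cwBCLE$ in a simply connected domain between two boundary points $a,b$ is realized by running an $\SLE_\kappa(\rho^{*})$-type branch from $a$ to $b$ with force point immediately to one side of the starting point, whose excursions away from the arc $[a,b]$ are precisely the true loops. Tracking the relevant parameter through the $\SLE_\kappa^\beta(\kappa-6)/\SLE_{\kappa'}(0,\kappa'-6)$ trunk description of \cite{cle_percolations} yields $\rho^{*}=3\kappa/2-6$, with the force point placed immediately next to $x$. The curve produced by this construction in $D_{\tau'}$, going from $x$ to $\eta'_{\tau'}$, is then exactly the process $\eta''$ of the statement.

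Finally, I would conclude by observing that, by construction of the $\BCLE$ from $\eta''$, the union of its true loops has outer boundary (as seen from $\infty$) equal to $\eta''$ itself, so removing these loops from $D_{\tau'}$ leaves as unbounded component precisely the unbounded component of $D_{\tau'}\setminus\eta''$, which is the unbounded component of $\h\setminus(\eta'[0,\tau']\cup\eta'')$. The main obstacle lies in the second step: one needs to verify, via the bookkeeping in the loop-trunk decomposition for $\beta=1$, that the $\BCLE$ parameter and the location of its force point are exactly those producing $\SLE_\kappa(3\kappa/2-6)$ with marked point at $x$, and that the target is $\eta'_{\tau'}$ (rather than some other boundary point of $D_{\tau'}$); once this identification is in place, the proposition reduces to a direct application of the $\SLE$ construction of the $\BCLE$ inside the unbounded pocket at the stopping time $\tau'$.
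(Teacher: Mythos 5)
The paper does not prove Proposition~\ref{looptrunkprop}; it cites it from \cite{cle_percolations}, and the surrounding text of Section~\ref{subsec:trunk_construction} only recalls how the statement fits inside the loop-trunk decomposition (including, right after the proposition, the equivalent description as an $\SLE_\kappa(-\kappa/2)$ and the iterative picture with alternating $\SLE_\kappa(3\kappa/2-6)$ and $\SLE_\kappa(-\kappa/2)$ curves). Your sketch follows the same route: conditionally on $\eta'[0,\tau']$, the $\CLE_\kappa$ loops on the right of the trunk form a $\BCLE$, and the outer boundary of the material traced by $\eta$ up to time $\tau$ is the branch of that $\BCLE$ exploration from $x$ to $\eta'_{\tau'}$. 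This is indeed the shape of the argument in \cite[Section~7]{cle_percolations}.

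That said, the step you yourself flag as ``the main obstacle'' — verifying that the $\BCLE$ parameter, the position of the marked/force point, and the target point come out to $\SLE_\kappa(3\kappa/2-6)$ from $x$ to $\eta'_{\tau'}$ with marked point at $x$ — is not carried out in your proposal, and that is exactly where the non-trivial content of the cited result lives (it is tied to the identification of the trunk law as $\SLE_{\kappa'}(0;\kappa'-6)$ and the precise $\BCLE$ parameter in \cite{cle_percolations}, which is not a routine computation). Also be careful with the sentence ``the union of its true loops has outer boundary equal to $\eta''$ itself'': what one actually needs is that $\eta''$ is the outer (right) boundary of $\eta[0,\tau]$ as seen from $\infty$ inside $D_{\tau'}$, i.e.\ of the trunk together with the partially-traced loop pieces, not just of the completed loops. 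So your proposal is a faithful reconstruction of the route, but, like the paper, it ultimately defers the substantive identification to \cite{cle_percolations} rather than proving it.
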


Recall that $\SLE_\kappa (\rho)$ processes for $\rho > -2$ are reversible (see \cite{MS_IMAG2}), and also that an $\SLE_\kappa (\rho)$ process from $a$ to $b$ with marked point at $c$ can be viewed as an $\SLE_\kappa (\kappa- 6- \rho)$ process from $a$ to $c$ with marked point at $b$ -- which leads to a number of equivalent descriptions of this conditional law of $\eta''$ (for instance as an $\SLE_{\kappa} ( - \kappa /2)$ process). 

In Proposition~\ref{looptrunkprop}, each excursion of $\eta''$ away from the trunk $\eta'$ corresponds to a portion of the same simple $\CLE_\kappa$ loop traced by the $\SLE_\kappa (\kappa -6)$ process $\eta$. To finish this loop, it is shown in \cite{cle_percolations} that one needs to draw an $\SLE_\kappa (- \kappa /2)$ $\eta'''$ in the ``pocket'' in between this excursion and $\eta'$. To then find the loops of $\eta$ squeezed in between $\eta'$ and $\eta'''$, one can iterate the procedure, by drawing alternatively $\SLE_\kappa(3 \kappa /2 -6)$ and $\SLE_\kappa(- \kappa/2)$ processes.

One first remark is that it is shown in \cite {cle_percolations} that the mapping $\beta \mapsto \rho'(\beta)$ in 
Theorem \ref{thm:cle_perc_trunk} is one-to-one from $[-1, 1]$ onto $[\kappa'-6, 0]$. Furthermore, it actually turns out that $\rho'$ is in fact a increasing function of $\beta$ -- this can at first appear counterintuitive since it means that ``the more loops are on the right of the trunk, the more to the right trunk tends to be'' -- but one has to keep in mind that there is a ``L\'evy compensation'' mechanism embedded in the construction of these 
$\SLE_\kappa^\beta(\kappa-6)$ processes.

\subsection {An instance of imaginary geometry coupling}
\label{Imgeom}

In the derivation of the previously stated results, the ``imaginary geometry'' coupling of several SLE-type curves with an auxiliary GFF was instrumental. 
A particular instance of these flow-line/counterflow line interaction will be the following (which follows from the statements in \cite{MS_IMAG}): 

Consider the upper half-plane $\h$, and a GFF with boundary conditions $\lambda' = \pi/\sqrt{\kappa'}$ on \jasonr{$\R_-$} and $-\lambda'$ on \jasonr{$\R_+$}. One can then define the ``counterflow-line'' of this GFF starting at $0$ aiming $\infty$ -- this is an $\SLE_{\kappa'}$ curve $\eta'$ from $0$ to $\infty$.  It is also possible, for the same GFF, to consider a ``flow line'' from $\infty$ to $0$, of appropriate angle, so that this curve $F$ is an $\SLE_\kappa (3 \kappa /2 -6)$ from $\infty$ to $0$, see Figure~\ref{picImGeom}. This is a simple curve that intersects the positive half-axis many times, but not the negative half-axis. We can define the infinite connected component $H$ of the complement of $F$. Then, for this coupling, a feature that will be very useful in our proofs is that {\em the conditional law of $\eta'$ given $F$ is that of an $\SLE_{\kappa'} (\kappa'-6)$ in $H$}.

\begin{figure}[ht!]
\includegraphics[scale=.55]{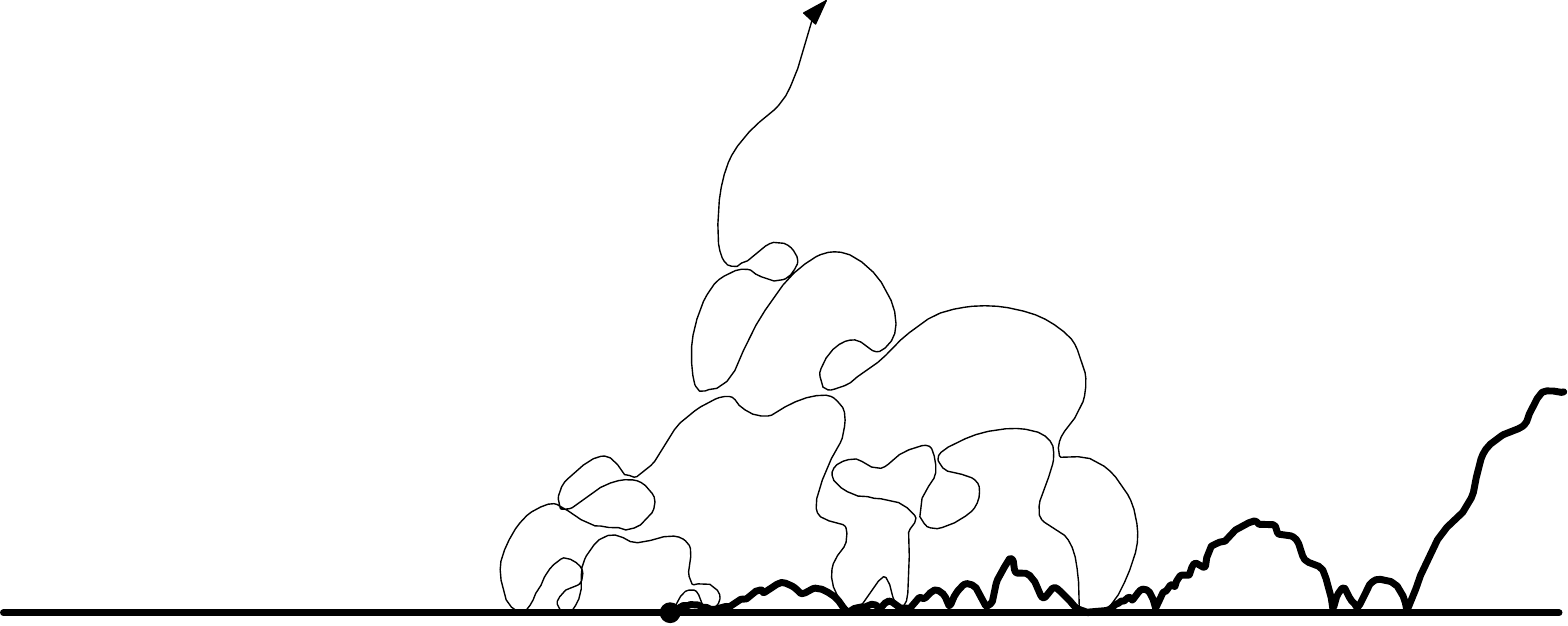} 
	\caption{The coupling of the $\SLE_{\kappa'} (\kappa' -6)$ with the $\SLE_\kappa (3 \kappa /2 -6)$}
	\label{picImGeom}
\end{figure}

A variant of the previous statement occurs if one considers $\eta'$ up to some stopping time $\tau'$ (that possibly involves additional randomness). Indeed (and this is of course related to the conformal Markov property of $\eta'$), up to the imaginary geometry change of coordinates formula described in \cite{MS_IMAG}, the GFF in the unbounded connected component $H'$ of the complement of $\eta'[0, \tau']$ has boundary conditions $\lambda'$ and $-\lambda'$ on the two sides of $\eta_{\tau'}'$. One can therefore consider in $H'$ the flow line $\wt F$ from $\infty$ to $\eta_{\tau'}'$ of the same angle as $F$, see Figure~\ref{picImGeom2}. This flow-line will  then coincide with $F$ until the first point at which it touches $\eta'[0, \tau']$, and the remaining part of $\wt F$ (that we call $F'$ and will play a key role in the proof of Proposition~\ref{prop:stationarity})  will be an $\SLE_\kappa (3 \kappa /2 - 6)$ from $x$ to $\eta_{\tau'}'$ in the remaining domain.

\begin{figure}[ht!]
\includegraphics[scale=.55]{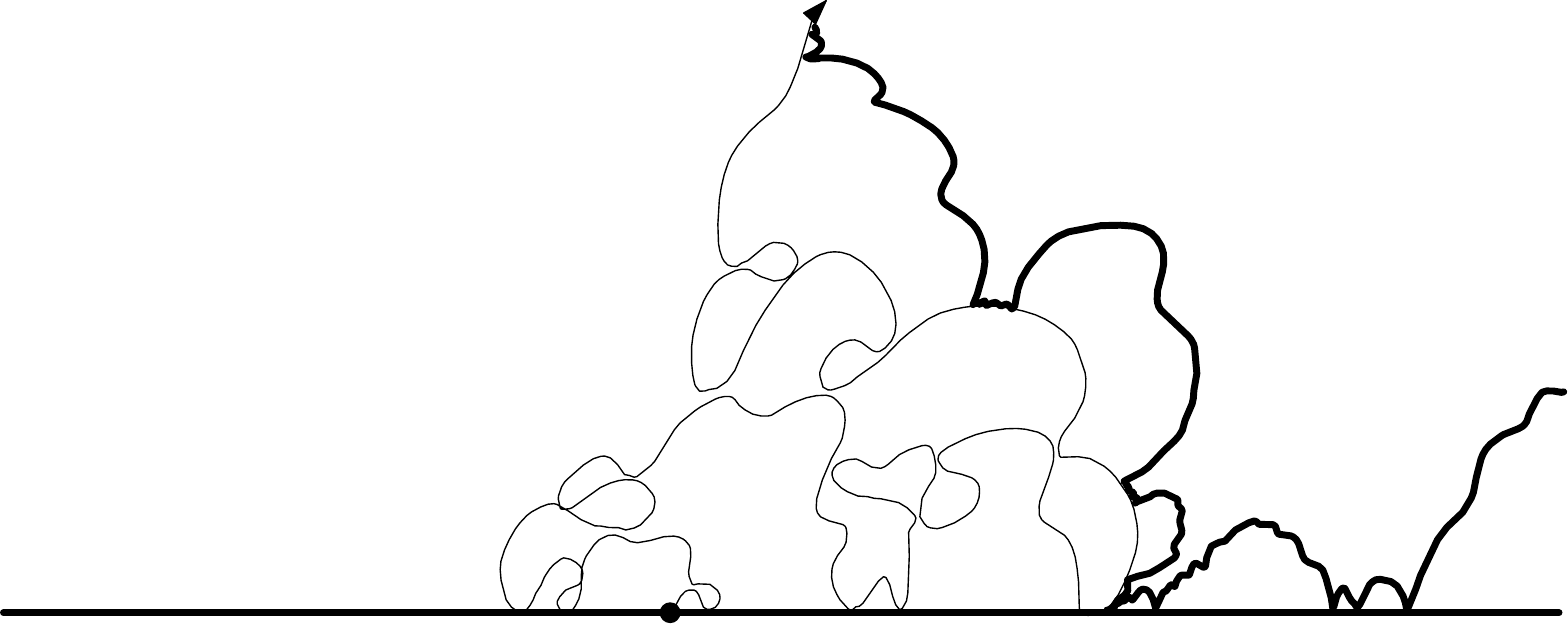} 
\includegraphics[scale=.55]{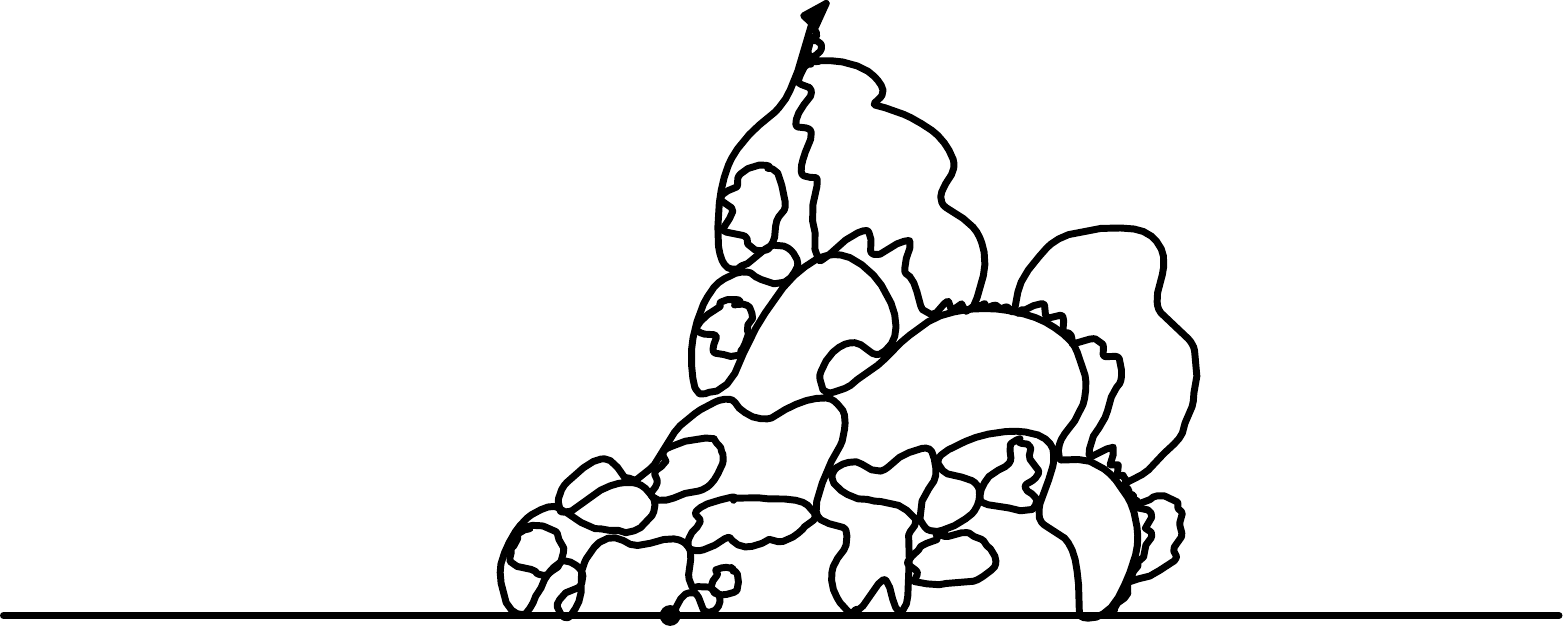} 
	\caption{The variant at finite time of the coupling. We will use in the sequel that the latter part of the flow-line can be viewed as the outer boundary of an $\SLE_\kappa (\kappa -6)$ process as on the bottom figure.)}
	\label{picImGeom2}
\end{figure}

In Section \ref {41}, we will also use a further variant of this type of coupling, where we this time couple $\eta'$ with two flow-lines $F_L$ and $F_R$ that lie of the two sides of $\eta'$.

\section{Totally asymmetric exploration of CLEs on quantum half-planes} 
\label {S3}

Throughout this section, we will consider $\gamma \in (\sqrt {8/3}, 2)$ and $\kappa = \gamma^2 \in (8/3, 4)$.  We will also assume that $\kappa'=16/\kappa$ and $\alpha = 4/\kappa = \kappa'/4$ as in~\eqref{eqn:basic_relations}.  We will begin in Section~\ref{subsec:unexplored_region} by showing that the law of the quantum half-plane is invariant under the operation of cutting along an independent $\SLE_\kappa (\kappa-6)$ process (so we are dealing here with the case $\beta=1$ where the trunk stays ``to the left'' of all CLE loops that it encounters) up to a given amount of quantum length for its trunk, and that the collection of quantum surfaces that are cut off to the left of the trunk form a Poisson point process of quantum disks.  We emphasize that in Section~\ref{subsec:unexplored_region}, we do not yet prove that the surfaces that are surrounded by CLE loops or cut out to the right of the trunk are also quantum disks (we will only describe their boundary lengths) -- this is then the purpose of Section~\ref{subsec:all_disks}.

\subsection{Stationarity in the totally asymmetric case}
\label{subsec:unexplored_region}
\label{S31} 
 
Consider a quantum half-plane $\CW = (\h,h,0,\infty)$.  In the sequel, when $O$ is an open subset of $\h$ (with some marked points), we will always use on $O$ the area measure given by $\CW$ when we refer to $O$ as a quantum surface. For presentation purposes, we choose to first study the case where one explores this quantum half-plane with an independent totally asymmetric $\SLE_\kappa (\kappa -6)$, as we believe that it will help the reader to see the strategy in the case of non-totally-asymmetric explorations in Section \ref {S4}.

Consider a totally asymmetric $\SLE_\kappa (\kappa -6)$ process $\eta$ in $\h$ from $0$ to $\infty$, with $\kappa = \gamma^2 \in (8/3, 4)$. Along the way, this process cuts a countable family of quantum surfaces away from $\infty$. This happens (i) when $\eta$ closes a CLE loop, (ii) when the trunk hits $\R$ or a point that had already been visited by $\eta$ (note that only countably many such double-points correspond to times at which a surface is actually cut out from infinity -- for example, in the set of times at which the trunk hits $\R$, only the times that are isolated from the left will satisfy this property). 
Those special double points of $\eta$ will correspond to one of the countably many times at which $\eta$ splits the remaining to be discovered domain into two parts.  It can disconnect a domain to its right or to its left.  We denote by $\CF_t$ the filtration generated by these three collections of quantum surfaces (corresponding to loops, cut out by the trunk to its left and cut out by the trunk to its right) up to time $t$ \jason{(where here we use the capacity time parameterization for $\eta$)}. We view these quantum surfaces as {\em marked} quantum surfaces with one or two marked boundary points: When it corresponds to a $\CLE$ loop, there is only one marked point, which is the position of the tip of the trunk at this disconnection time. When it corresponds to a domain cut out by the trunk which does not intersect $\R$, one also has one marked point corresponding to the disconnection time.  Finally, when it corresponds to a domain cut out by the trunk which intersects $\R$, one has two marked points corresponding to the first and last boundary point visited by $\eta$.  

Since the trunk of $\eta$ is an $\SLE_{\kappa'}(\kappa' - 6)$ process independent of $\CW$, it is possible to define its LQG-quantum length \jason{(see \cite[Theorem~1.18]{dms2014mating} and the surrounding text)} . We denote by $\tau'=\tau_t'$ the first time at which this quantum length reaches $t$, and we denote by $\tau=\tau_t$ the corresponding time for $\eta$.  The $\sigma$-field $\CF_{\tau}$ therefore contains the information about all the quantum surfaces that have been cut out by $\eta[0, \tau]$ from $\infty$. We define $\CG_t := \CF_{\tau_t}$.

The first main key result is the following: 
\begin{proposition}
\label{prop:stationarity}
For each $t \geq 0$, the quantum surface parameterized by the unbounded component of 
$\h \setminus \eta([0,\tau_t])$ with marked points $\eta(\tau)$ and $\infty$ is a quantum half-plane, which is independent of $\CG_t$.
\end {proposition}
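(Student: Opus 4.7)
The strategy is to embed the quantum half-plane $\CW$ into a larger quantum wedge $\CW_0$ of weight $3\gamma^2/2-2$, on which the trunk $\eta'$ of $\eta$ can be realized, via the imaginary-geometry coupling of Section~\ref{Imgeom}, as a pure $\SLE_{\kappa'}$ counterflow line. Concretely, apply Theorem~\ref{thm:wedge_welding}(i) to $\CW_0$ with $\rho_1 = 3\kappa/2-6$ and $\rho_2 = 0$ and an independent $\SLE_\kappa(3\kappa/2-6;0)$ cutting curve $F$ (the flow line from $\infty$ to $0$): the two components of $\CW_0\setminus F$ are then independent quantum wedges of weights $3\gamma^2/2-4$ and $2$, and the weight-$2$ component is identified with the quantum half-plane $\CW$. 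On the imaginary-geometry side, the pure $\SLE_{\kappa'}$ counterflow line $\eta_0'$ on $\CW_0$, conditioned on $F$, has the law of the trunk of $\eta$ in $\CW$ (an $\SLE_{\kappa'}(0,\kappa'-6)$ with force point on the right, matching the totally asymmetric convention that loops lie on the right of the trunk).

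Given this coupling, Theorem~\ref{thm:sle_kp_explore}(ii) applies directly to $\eta_0'$ on $\CW_0$ parameterised by quantum length: at the stopping time $\tau'_t$, the unbounded component $H^{(0)}_t$ of $\CW_0\setminus \eta_0'[0,\tau'_t]$ is again a quantum wedge of weight $3\gamma^2/2-2$, independent of the decorated quantum surface $K^{(0)}_t := \CW_0\setminus H^{(0)}_t$. Using the stopping-time variant of the imaginary-geometry coupling recalled at the end of Section~\ref{Imgeom} (see Figure~\ref{picImGeom2}), the flow line $\wt F$ from $\infty$ in $H^{(0)}_t$ of the same angle as $F$ coincides with $F$ up to its first hit of $\eta_0'[0,\tau'_t]$ -- which happens at the rightmost intersection point $x$ of $\eta_0'[0,\tau'_t]$ with $\R$ -- and then continues as $F'$, an $\SLE_\kappa(3\kappa/2-6)$ from $x$ to $\eta_0'(\tau'_t)$ with marked point at $x$. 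By Proposition~\ref{looptrunkprop}, this $F'$ is exactly the outer flow-line arc $\eta''$ bounding the unbounded component of $\h\setminus\eta[0,\tau_t]$ inside $\CW$.

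Now apply Theorem~\ref{thm:wedge_welding}(i) a second time, to the weight-$(3\gamma^2/2-2)$ wedge $H^{(0)}_t$ (with marked points at $\infty$ and $\eta_0'(\tau'_t)$) cut by $\wt F$, again with $(\rho_1,\rho_2) = (3\kappa/2-6,0)$: this splits $H^{(0)}_t$ into independent quantum wedges of weights $3\gamma^2/2-4$ and $2$. The weight-$2$ component -- the side opposite the $\rho_1=3\kappa/2-6$ force point -- is, by construction, the unbounded component of $\h\setminus\eta[0,\tau_t]$ inside $\CW$ with marked points $\eta(\tau)$ and $\infty$, and is therefore a quantum half-plane. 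Independence from $\CG_t$ follows because every surface in $\CG_t$ lies either inside $K^{(0)}_t$ (the surfaces cut off by the trunk) or inside the weight-$(3\gamma^2/2-4)$ component produced in this last step (which contains both the CLE loops attached to the right of the trunk and the original weight-$(3\gamma^2/2-4)$ piece on the far side of $F$); by the two independence statements obtained along the way, each such surface is independent of the weight-$2$ quantum half-plane.

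The main obstacle is the careful bookkeeping for the imaginary-geometry couplings: namely, verifying that (i) conditioning the counterflow line $\eta_0'$ on $\CW_0$ given $F$ really produces the trunk $\SLE_{\kappa'}(0,\kappa'-6)$ on $\CW$ with the correct marked-point convention (force point on the right, so that loops are on the right), and that (ii) the marked-point configuration in the two applications of Theorem~\ref{thm:wedge_welding}(i) is chosen consistently so that the weight-$2$ side in each case is the one containing the relevant $\infty$-marked point. Both points amount to chasing reversibility and side conventions through the coupling; once the conventions are pinned down, the weight arithmetic $(3\gamma^2/2-2) = (3\gamma^2/2-4) + 2 + 4 - 4$ required by Theorem~\ref{thm:wedge_welding}(i) is elementary and completes the argument.
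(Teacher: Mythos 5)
Your proof is correct and follows essentially the same route as the paper's: embed $\CW$ in a weight-$(3\gamma^2/2-2)$ wedge, use the imaginary-geometry coupling of the $\SLE_{\kappa'}$ counterflow line with the flow line $F$, invoke Theorem~\ref{thm:sle_kp_explore}(ii) to see that the unbounded complement of $\eta'[0,\tau'_t]$ is again a weight-$(3\gamma^2/2-2)$ wedge independent of the cut-out decorated surface, and then cut that wedge by $\wt F$ via Theorem~\ref{thm:wedge_welding}(i) to produce the quantum half-plane. The only cosmetic difference is that you apply Theorem~\ref{thm:wedge_welding}(i) explicitly for both cuttings (with $\rho_1 = 3\kappa/2-6$, $\rho_2 = 0$), whereas the paper invokes Theorem~\ref{thm:sle_kp_rp} in passing for the conditional law of $\eta'$ given $F$; the mechanism and conclusion are identical.
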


\begin{proof}
It is convenient to start with a quantum wedge $\wt{\CW} = (\h,\wt{h},0,\infty)$ of weight $3\gamma^2/2-2$, which is ``wider'' than the quantum half-plane, and to view the quantum half-plane as a subset of this quantum wedge. More precisely, Theorem~\ref{thm:wedge_welding} implies that if one draws an independent $\SLE_\kappa (3\kappa/2 -6)$ from $\infty$ to $0$ (with marked point at $\infty$ to start with) that we call $F$, then the unbounded connected component of the complement of $F$ will be a quantum half-plane (that is independent of the quantum surfaces with bounded area that are cut out from $\infty$ by the path), see Figure~\ref{F5}.

\begin{figure}[ht!]
\includegraphics[scale=.5]{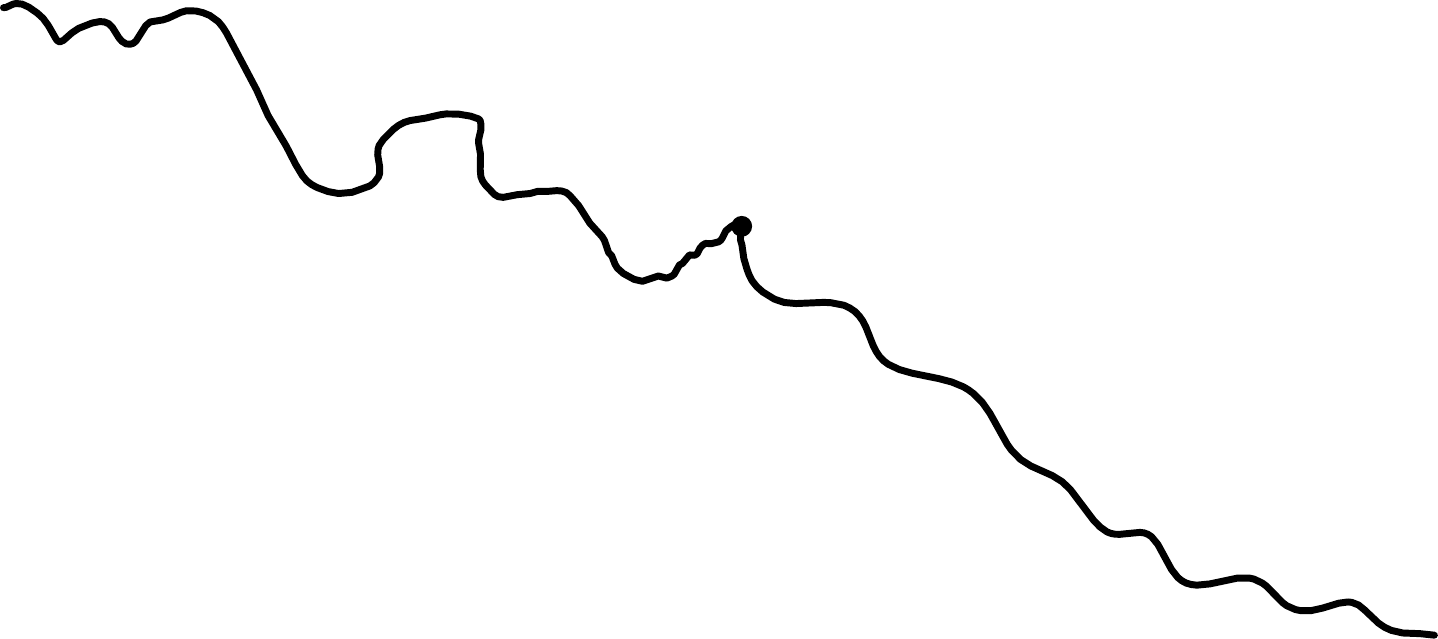} 
\quad
\includegraphics[scale=.5]{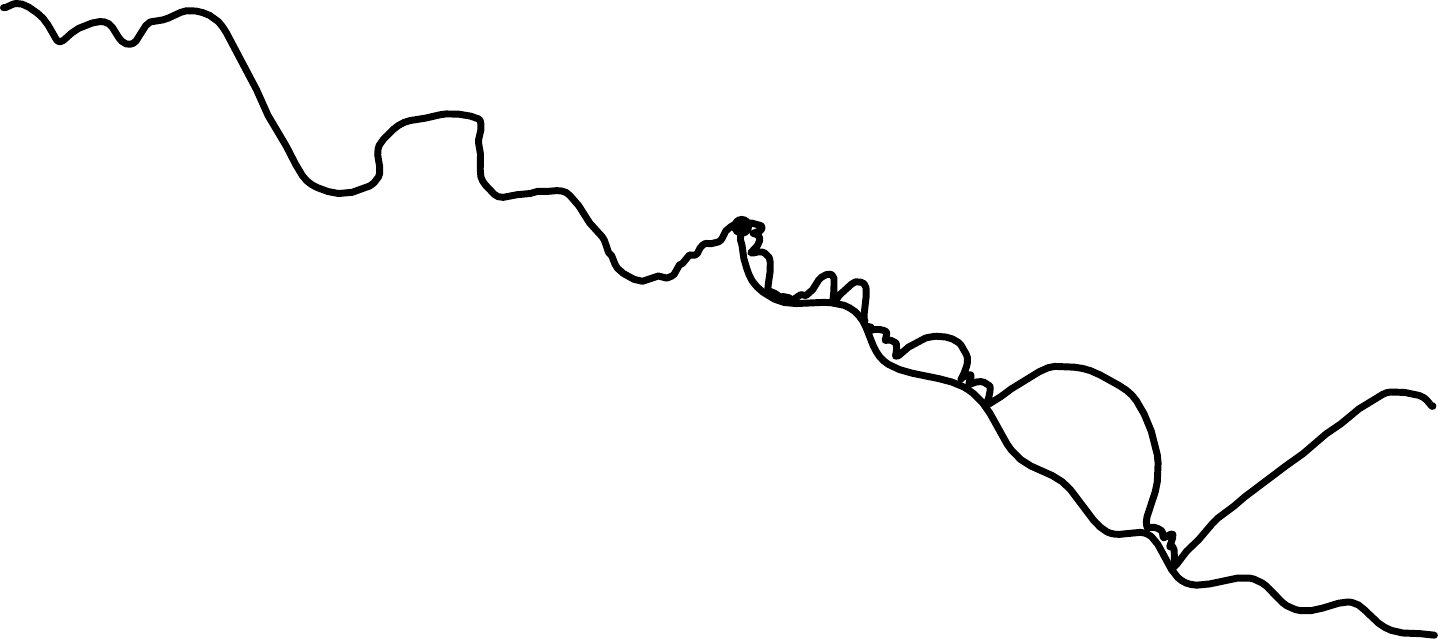} 
	\caption{Symbolic sketch: The wedge (above the curve), the $\SLE_\kappa (3\kappa/2 - 6)$ and the half-plane (above the curves)}
	\label{F5}
\end{figure}

We now consider $\eta'$ to be an independent $\SLE_{\kappa'}$ from $0$ to $\infty$ in $\h$. We couple $\eta'$ with an $\SLE_{\kappa} (3 \kappa /2 - 6)$ process $F$ from $\infty$ to $0$ as described in Section~\ref{Imgeom}. We let $H$ be the unbounded connected component of the complement of $F$. 
Recall also the definitions of $x$, $\wt F$ and $F'$ from Section~\ref{Imgeom}.

Then, using Theorem~\ref{thm:sle_kp_rp} on the one hand, and the properties of the coupling of $F$ with $\eta'$, we see that the triplet $(H, 0, \infty)$ equipped with the area measure from $\wt{\CW}$ is a quantum half-plane, and that conditionally on $F$, the process $\eta'$ is an $\SLE_{\kappa'} ( \kappa' -6)$ from $0$ to $\infty$ in $H'$. So, we can use $\eta'$ in $H$ as a model for our $\SLE_{\kappa'} (\kappa'-6)$ on an independent quantum half-plane, see Figure~\ref{F6}.

\begin{figure}[ht!]
\includegraphics[scale=.5]{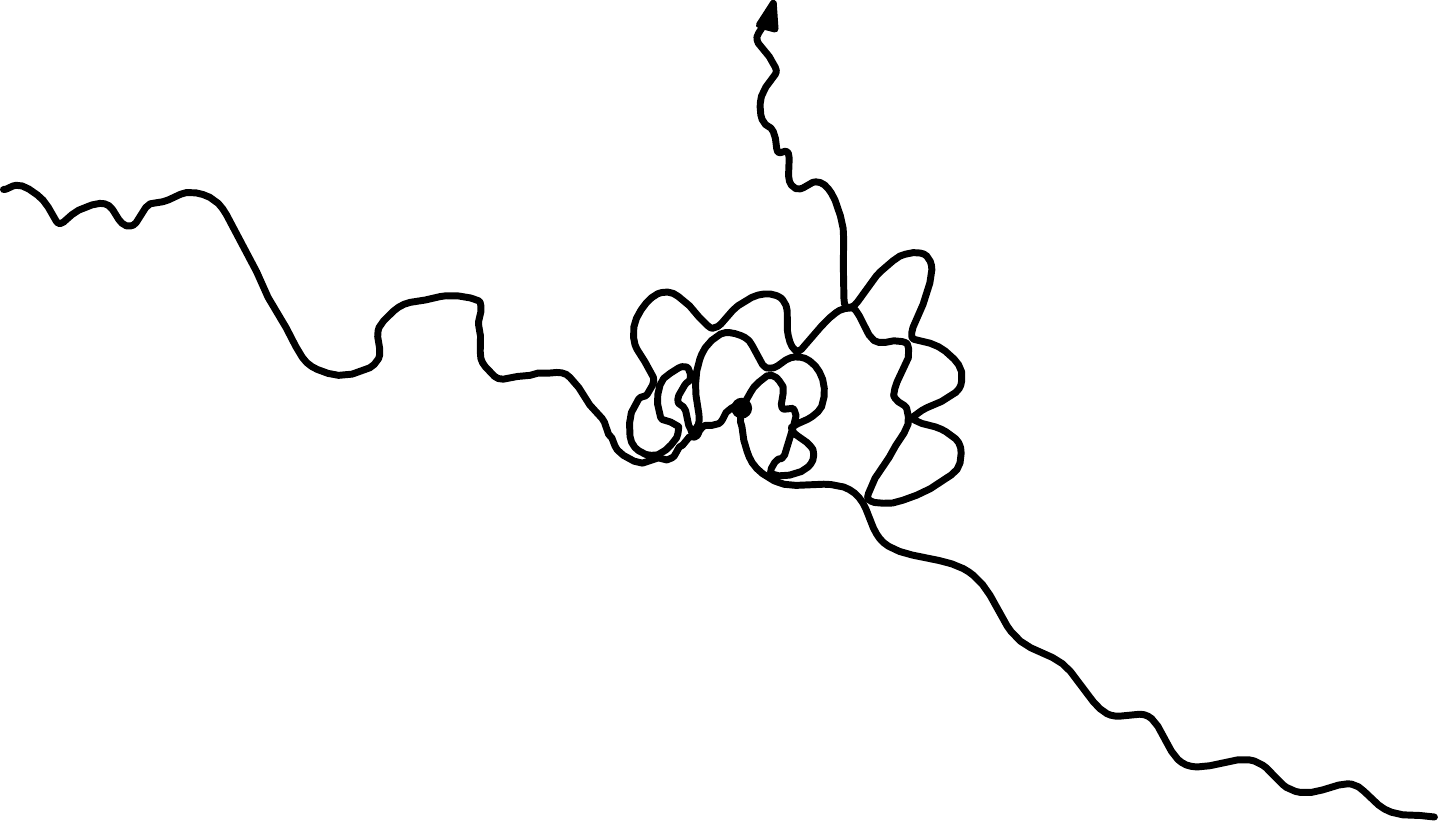} 
\quad
\includegraphics[scale=.5]{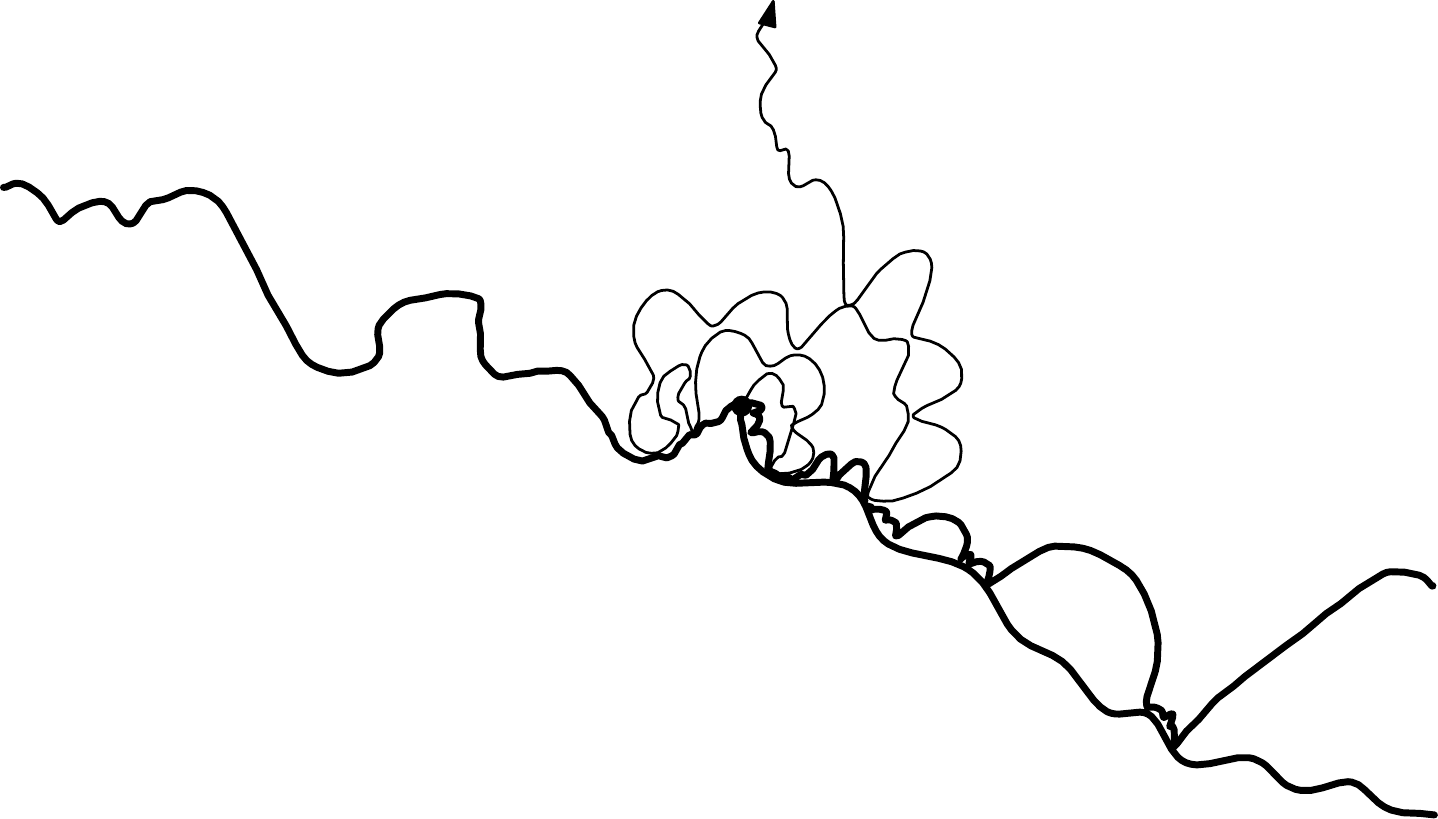} 
	\caption{Symbolic sketch: The $\SLE_{\kappa'}$, viewed in the wedge -- the complement is still a wedge. The curve viewed in the half-plane is an $\SLE_{\kappa'}(\kappa'-6)$}
	\label{F6}
\end{figure}

The crucial point now is that the conditional law of the outer boundary of $\eta[0, \tau]$ is given by an additional $\SLE_{\kappa}(3 \kappa/2 - 6)$ from $x$ to $\eta_{\tau'}'$ in $H' \setminus \eta'[0, \tau']$ -- which happens to be the same as the conditional law of $F'$. We can therefore choose our coupling in such a way that this outer boundary is exactly $F'$.

\begin{figure}[ht!]
\includegraphics[scale=.55]{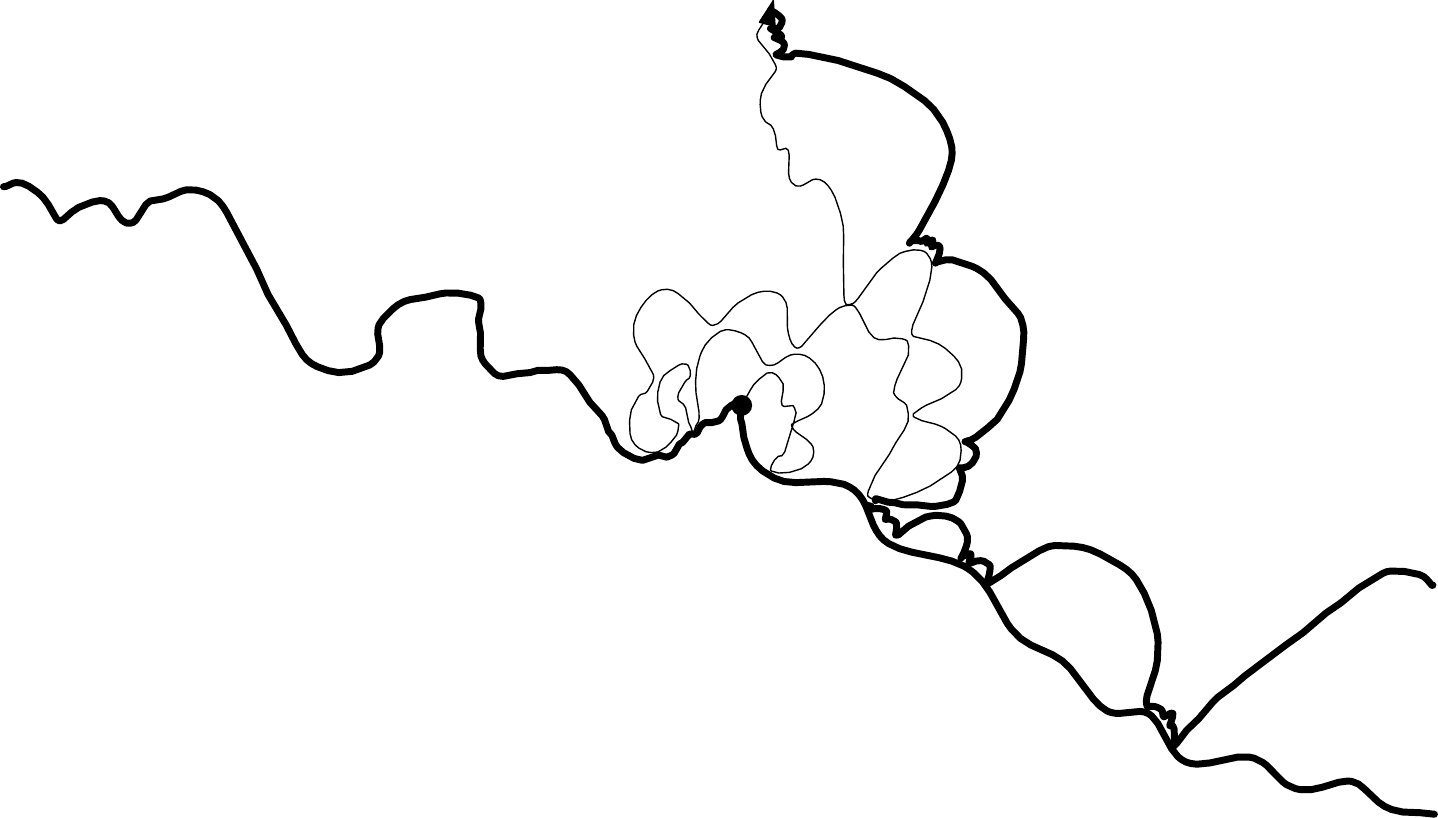}  
	\caption{Symbolic sketch: Creating a half-plane by running the $\SLE_\kappa (3\kappa /2 -6)$ in the wedge of the left of Figure~\ref{F6}.  Its latter part can be viewed as the right boundary of the $\SLE_\kappa (\kappa -6)$ that $\eta'$ is the trunk of.}
	\label{F7}
\end{figure}

Hence, the quantum surface parameterized by the unbounded component of 
$\h \setminus \eta([0,\tau])$ with marked points $\eta(\tau)$ and $\infty$ can be realized as follows: In the unbounded connected component $H''$ of $\h \setminus \eta'[0, \tau']$, draw an independent $\SLE_{\kappa} (3\kappa /2 - 6)$ from $\infty$ to $\eta_{\tau'}'$ 
and consider the infinite connected component of its complement, with marked points $\eta_{\tau'}'$ and $\infty$. 

But we know that $H''$  with marked points $\eta'(\tau')$ and $\infty$ is a quantum wedge of weight $3 \gamma^2 /2 -2$, that is independent of all the quantum surfaces cut out of $\eta'$ up to this time. So, when one draws this $\SLE_{\kappa} (3 \kappa /2 - 6)$ as in Figure~\ref{F7}, one obtains a quantum half-plane, which shows the proposition.
\end {proof} 

We can note the following by-product of the proof: 
Let $(S^l_{t_i})_{t_i > 0 }$ denote the point process of quantum surfaces that are cut out to the left of the trunk of $\eta$. (In other words, $S_{t_i}^l$ is not empty if exactly at $\tau_{t_i}$, the trunk cuts out a bounded connected component $O_{t_i}$ from $\infty$ to its left.)  Then, the process $(S_{t_i}^l)_{t_i > 0}$ is a Poisson point process of marked quantum disks 
because by construction, the process $(S_{t_i}^l)_{t_i > 0}$ is the same as that cut out on its left by the $\SLE_\kappa$ process in the quantum wedge $\wt{\CW}$, and we know by Theorem~\ref{thm:sle_kp_explore} that this is a Poisson point process of marked quantum disks.

Proposition~\ref{prop:stationarity} also implies the following fact: 
\begin {corollary} 
 Let $L_t$ (resp.\ $R_t$) denote the change in the boundary length of the left (resp.\ right) side of the unbounded component of $\h \setminus \eta([0,\tau_t])$ relative to time $0$.  Then $L$ and $R$ are independent $\alpha$-stable L\'evy processes.
\end {corollary}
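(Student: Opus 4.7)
The plan is to identify $(L,R)$ as a two-dimensional $\alpha$-stable L\'evy process whose L\'evy measure is supported on the coordinate axes, which is equivalent to the statement of the corollary. First, for the L\'evy property, note that the $\sigma$-algebra $\CG_t = \CF_{\tau_t}$ encodes all quantum surfaces cut out from $\infty$ by $\eta[0,\tau_t]$, and in particular determines $(L_s,R_s)_{s \le t}$. Proposition~\ref{prop:stationarity} combined with the conformal Markov property of $\eta$ (applied at the capacity time $\tau_t$ and then re-parameterized by trunk quantum length) implies that $(L_{t+s}-L_t,\,R_{t+s}-R_t)_{s\ge 0}$ is independent of $\CG_t$ and has the same joint law as $(L_s,R_s)_{s \ge 0}$, so $(L,R)$ is a L\'evy process in $\R^2$.

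The next step is to upgrade this to $\alpha$-stability via the standard scale-invariance of the quantum half-plane. Concretely, the law of the marked surface $\CW$ is invariant under the transformation that replaces the underlying free-boundary GFF $h$ by a suitable additive constant shift combined with a dilation of $\h$, and this transformation rescales all simple $\SLE_\kappa$-type quantum lengths (hence the jumps of $L$ and $R$) by some factor $\lambda > 0$ while rescaling the $\SLE_{\kappa'}$-type trunk quantum length by $\lambda^\alpha$ (see Remark~\ref{GQLRemark}). Since $\eta$ is independent of $h$, this yields the self-similarity
\[ (L_{c^\alpha t},\, R_{c^\alpha t})_{t \ge 0} \stackrel{d}{=} (cL_t,\, cR_t)_{t \ge 0} \qquad \text{for every } c > 0. \]
A L\'evy process with this self-similarity is $\alpha$-stable and, since $\alpha \in (1,2)$, has no Gaussian component; in particular, each of $L$ and $R$ is marginally an $\alpha$-stable L\'evy process.

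The last step -- and what I expect to be the main subtlety -- is the independence of $L$ and $R$. The plan is to show that $L$ and $R$ have no common jumps, which together with the absence of a Gaussian component forces the L\'evy measure of $(L,R)$ to split as the sum of two measures supported on the coordinate axes, so by the L\'evy--Khintchine formula $L$ and $R$ are independent. The downward jumps of $L$ occur only at the isolated times where $\eta$ disconnects a region from $\infty$ to the \emph{left} of its trunk; in the totally asymmetric case $\beta=1$, no CLE loop closing produces such a jump since all CLE loops lie on the right. The downward jumps of $R$ occur either when $\eta$ disconnects a region to the \emph{right} of its trunk or when $\eta$ closes a CLE loop (whose interior is then cut off on the right). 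Each such event is local to a specific side of the trunk's trace at an isolated instant, and standard transversality for $\SLE_{\kappa'}$-type curves (absence of triple points) together with the analogous statements for the $\SLE_\kappa(\kappa-6)$ process rules out any two of them occurring simultaneously almost surely. This yields the required concentration of the L\'evy measure of $(L,R)$ on the coordinate axes.
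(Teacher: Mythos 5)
Your proof follows essentially the same route as the paper's: Proposition~\ref{prop:stationarity} gives the L\'evy property, Remark~\ref{GQLRemark} and the scale-invariance of the quantum half-plane give $\alpha$-stability, and the independence of $L$ and $R$ is deduced from the almost-sure absence of simultaneous jumps. Your elaboration on why jumps cannot coincide is a bit more detailed than the paper (which invokes the continuity of the trunk and the Markov property, and cites Bertoin for the abstract implication), but the argument is the same in substance and ordering differences are cosmetic.
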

\begin {proof} 
Proposition~\ref{prop:stationarity} implies that  the processes $L$ and $R$ have stationary independent increments and are therefore L\'evy processes. That $L$ and $R$ are independent follows because a.s.\ the two processes do not have a simultaneous jump \jason{(see, e.g., \cite[Chapter~0, Section~4]{bertoin96levy})}.  

Recall from Remark \ref {GQLRemark} that when one scales the quantum half-plane in such a way that its boundary length is multiplied by a factor $u$, then the  natural quantum length of the trunk is scaled by a factor $u^\alpha$. It therefore follows that in the present setting, $(uL_{t})_{t \ge 0}$ has the same law as $(L_{u^\alpha t})_{t \ge 0}$. Since we have just seen that $L$ is a L\'evy process, we can conclude that the process $L$ is $\alpha$-stable. The same argument shows that $R$ is $\alpha$-stable as well.   
\end{proof}

Let us now define by $S^r$ and $S^+$ to be the two point processes of quantum surfaces cut out to the right of the trunk and encircled by a CLE loop hit by the trunk. The same arguments as in the corollary for $L$ and $R$ can be used for these point processes: Proposition~\ref{prop:stationarity} shows that the law of the point processes $(S^l, S^r, S^+)$ after time $\tau_t$ is independent of the point processes of surfaces that have appeared before that time and is stationary. 
Furthermore, we note that two different quantum surfaces can never appear at the same time on the trunk (because of the continuity of the trunk and of the Markovian property of $\SLE_\kappa (\kappa -6)$ when it is on the trunk).  It therefore follows that the three processes $S^r$, $S^l$ and $S^+$ are independent Poisson point processes of marked quantum surfaces (when time is parameterized via the natural quantum length of the trunk). 

The quantum boundary lengths of the surfaces appearing in $S^l$, $S^r$ and $S^+$ are respectively the negative jumps of $L$, the negative jumps of $R$ and the positive jumps of $R$ (by construction, the process $L$ has no positive jumps). 
Their intensities are therefore multiples of the measure $dl / l^{\alpha + 1}$. We will denote the respective multiplicative constants by $a_{l, -}$, $a_{r, -}$ and $a_+$. In the next sections, we will not determine these constants, but we will prove the following relations between them: 
$ a_{l, -} = a_{r, -}$ and $a_+ = - 2\cos (\kappa \pi / 4) a_{l,-}$. 

Note that we have argued that 
$S^l$ is a Poisson point process of quantum disks, but we have not yet shown that it is also the case for $S^r$ and $S^+$. 

\subsection{All cut-out surfaces are quantum disks}
\label{subsec:all_disks}
\label {S32} 

Our goal is now to explain why $S^+$ and $S^r$ are point processes of quantum disks. We use the same notation as in the previous section, i.e., we draw $\eta$ on an independent quantum half-plane $(\h,h,0,\infty)$  and denote its trunk by~$\eta'$.

We know by Theorem~\ref{thm:sle_kp_rp} that $\eta'$ slices the quantum half plane into three independent quantum wedges of respective weights $\gamma^2-2$, $2- \gamma^2/2$ and $2 - \gamma^2/2$, corresponding respectively to the surfaces that are cut out to the left of the trunk (with boundaries intersecting $\R_-$), the surfaces between the left and right boundaries of the trunk, and the surfaces that lie to the right of the trunk (with boundaries that intersect \jason{$\R_+$}). 

The beads of the first quantum wedge (to the left of the trunk, components touching $\R_-$) are quantum disks, which is of course consistent with the fact that the process $S^l$ is a point process of quantum disks (mind however that not all surfaces of $S^l$ will be part of this wedge -- they will form a ``forested wedge'' in the terminology of \cite{dms2014mating}). Let us now focus on the latter (``rightmost'') quantum wedge $\CV$ with weight $2- \gamma^2 /2$. (Note that $2 - \gamma^2/2$ is smaller than $\gamma^2 -2$ because $\kappa > 8/3$, and that $\CV$ is in fact the ``dual'' of the thick wedge of weight $3 \gamma^2 /2 -2$ that we used before).  
By the loop-trunk decomposition of $\eta$, we know  the ``right boundary'' $\eta''$ of $\eta$, will consist of the concatenation of $\SLE_\kappa (3 \kappa /2 -6)$ processes (one independent one in each of the beads of the wedge $\CV$, from one marked point of the wedge to the other one).  We will deduce that $S^r$ and $S^+$ are also Poisson point processes of quantum disks from the following facts involving beads of thin wedges (see Remark \ref {beadboundaries}):  

\begin{lemma} 
\label{qdright}
\label{QDRIGHT}
\hskip 1cm
\begin{enumerate}[(i)]
 \item 
Consider a bead of a quantum wedge of weight $W= 2 - \gamma^2 /2$ conditioned on its left and right boundary lengths. Draw an independent $\SLE_\kappa (3 \kappa / 2 -6)$ from one of its marked points to the other one. Then, conditionally on its boundary length, the surface cut out to the right of the path is a quantum disk, and conditionally on their (left and right) boundary lengths, the surfaces cut out to its left are beads of a quantum wedge of weight $W'= 3 \gamma^2 /2 - 4$. 
 \item 
Consider a bead of a quantum wedge of weight $W'= 3 \gamma^2 / 2 -4$ conditioned on its left and right boundary lengths. Draw an independent $\SLE_\kappa (- \kappa /2)$ from one of its marked points to the other one. Then, conditionally on its boundary length, the surface cut out to the right of the path is a quantum disk, and conditionally on their (left and right) boundary lengths, the surfaces cut out to its left are beads of a quantum wedge of weight $W= 2- \gamma^2 /2$.  
\end{enumerate}
\end{lemma}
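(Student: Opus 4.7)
The plan is to reduce each part to Theorem~\ref{thm:wedge_welding}(i) applied to an appropriate \emph{thick} quantum wedge, and then to transport the conclusion back to beads via the duality between a bead of a thin quantum wedge of weight $W \in (0, \gamma^2/2)$ and a bead of the thick quantum wedge of weight $\gamma^2 - W \in (\gamma^2/2, \gamma^2)$. This duality was recalled in Section~\ref{S.Background}: the Bessel processes of dimensions $\delta$ and $4 - \delta$ share the same excursion measure, so the infinite measures on beads of the two wedges coincide, and conditioning on the left and right boundary lengths of a bead produces the same conditional law in either description.

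For part (i), set $W = 2 - \gamma^2/2$; since $\gamma^2 \in (2,4)$ the dual thick wedge $\widetilde{\CW}$ has weight $\widetilde W = \gamma^2 - W = 3\gamma^2/2 - 2 \in (\gamma^2/2, \gamma^2)$. Take $\rho_1 = 3\kappa/2-6$ (force point just to the left of the starting marked point, matching the convention of Proposition~\ref{looptrunkprop}) and $\rho_2 = 0$. The balancing condition $\rho_1 + \rho_2 + 4 = \widetilde W$ is met, so Theorem~\ref{thm:wedge_welding}(i) applied to $\widetilde{\CW}$ cut by an independent $\SLE_\kappa(3\kappa/2-6; 0)$ produces two independent quantum wedges: a left wedge of weight $W' = 3\gamma^2/2 - 4$, which is thin since $W' < \gamma^2/2$ for $\gamma^2 < 4$; and a right wedge of weight $2$, which is a quantum half-plane. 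Dualizing the right piece once more, the half-plane of weight $2$ is dual to the thin wedge of weight $\gamma^2 - 2$, and beads of the latter are by definition quantum disks given their boundary length. Restricting the welded decomposition to a single bead of $\widetilde{\CW}$ (equivalently, of the original thin wedge of weight $W$), the left part is a bead of a wedge of weight $W'$ and the right part is a quantum disk, and independence of the two sides of the cut in Theorem~\ref{thm:wedge_welding}(i) translates into the required conditional independence given the boundary lengths. Part (ii) is proved by exactly the same argument with $W$ replaced by $W' = 3\gamma^2/2 - 4$: one verifies $\gamma^2 - W' = 4 - \gamma^2/2 \in (\gamma^2/2, \gamma^2)$ for $\gamma^2 \in (8/3, 4)$, and applies Theorem~\ref{thm:wedge_welding}(i) to the dual thick wedge with $\rho_1 = -\kappa/2$ and $\rho_2 = 0$; the left side has weight $2 - \gamma^2/2 = W$ (thin) and the right side has weight $2$ (half-plane), as required.

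The principal technical point is to verify that the welding statement of Theorem~\ref{thm:wedge_welding}(i) is compatible with restricting to an individual bead and with conditioning on its left and right boundary lengths. Concretely, one must show that if one conditions the underlying Bessel process of $\widetilde{\CW}$ on a single excursion of prescribed length and height, then the pair of surfaces cut out by the $\SLE$ inside this excursion is distributed as the bead-level version of Theorem~\ref{thm:wedge_welding}(i). This reduces to the identification of the Bessel excursion measures for dimensions $\delta$ and $4-\delta$ (the mechanism underlying the bead-duality itself), combined with the orthogonal decomposition of the GFF into radial and lateral parts: the radial piece encodes the Bessel profile that one is conditioning on, while the lateral piece, which carries the welding information, is the same in both the $\delta$- and $(4-\delta)$-dimensional descriptions. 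This should then allow one to transfer the independence of $\CW_1$ and $\CW_2$ from Theorem~\ref{thm:wedge_welding}(i) to the conditional independence inside a bead asserted in the lemma.
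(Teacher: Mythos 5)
Your weight bookkeeping is correct, and the slogan ``prove a thick-wedge statement and dualize to beads'' is superficially attractive, but the central transfer step is not justified and I do not see how to make it work as stated.

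First, there is a category error in how you invoke the duality. A thick quantum wedge of weight $\widetilde W = 3\gamma^2/2-2$ is a single unbounded surface, encoded by one trajectory of a Bessel process of dimension $\delta>2$ that never returns to $0$; it has no beads. The duality recalled in Section~\ref{S.Background} is not ``the Bessel processes of dimensions $\delta$ and $4-\delta$ share the same excursion measure'' (they do not -- the dimension-$\delta$ process for $\delta>2$ has no excursion measure from $0$). It is a \emph{conditioning}: the dimension-$\delta$ Bessel process conditioned to hit $0$ becomes a Bessel excursion of dimension $4-\delta$, and correspondingly the thick wedge of weight $\gamma^2-W$ conditioned on this event becomes a bead of the thin wedge of weight $W$. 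Writing ``restricting the welded decomposition to a single bead of $\widetilde\CW$'' treats the thick wedge as if it already had bead structure, which it does not.

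Second, and more seriously, even after replacing ``restrict'' by ``condition,'' the resulting step does not follow. Theorem~\ref{thm:wedge_welding}(i) produces two \emph{infinite} independent wedges $\CW_1,\CW_2$ from an $\SLE_\kappa(\rho_1;\rho_2)$ from $0$ to $\infty$ on $\widetilde\CW$. To reach the lemma's finite-volume statement you would need to condition the radial (Bessel) part of the \emph{full} field $\widetilde\CW$ to make a finite excursion, and then argue (a) that this conditioning translates into the assertion that $\CW_1$ and $\CW_2$ become, respectively, a bead of a weight-$W'$ thin wedge and a quantum disk, still conditionally independent given boundary lengths, and (b) that the $\SLE$ from $0$ to $\infty$, after conditioning, becomes the chordal $\SLE_\kappa(3\kappa/2-6)$ between the two marked points of the bead. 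Neither (a) nor (b) is automatic: the radial part of $\widetilde\CW$ is a nontrivial function of the radial parts of $\CW_1$, $\CW_2$ and the welding interface, so conditioning it to hit $0$ does not simply condition each of the two pieces separately; and the target point of the $\SLE$ does change under the reparametrization that sends the thick-wedge picture to the bead picture. Your appeal to the ``orthogonal decomposition of the GFF into radial and lateral parts'' would need a genuine finite-volume welding/disintegration argument here (of the kind appearing in \cite{AG}), not a one-line deduction from the infinite-volume theorem.

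For comparison, the paper avoids the issue entirely by never invoking Theorem~\ref{thm:wedge_welding}(i) directly for this lemma. It starts from a finite-volume object -- a doubly marked quantum disk, which is a bead of the weight-$(\gamma^2-2)$ thin wedge -- cut by an $\SLE_\kappa(3\kappa/2-6;-\kappa/2)$, and applies the \emph{thin}-wedge welding statement Theorem~\ref{thm:wedge_welding}(ii). This already identifies the conditional laws of the two sides as beads of weight-$W'$ and weight-$W$ wedges. It then uses target-invariance of $\SLE_\kappa(\rho;\kappa-6-\rho)$ to identify the curve inside a single right-hand bead as the desired $\SLE_\kappa(3\kappa/2-6)$, and -- crucially -- it uses the degeneration Lemma~\ref{lem:disk_limit} (a bead of a weight-$W\in(0,\gamma^2-2)$ wedge with one boundary arc of length $\epsilon\to 0$ converges to a quantum disk) to show that the remaining right-hand surface is in fact a quantum disk and not merely a bead of a weight-$(2-\gamma^2/2)$ wedge. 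Note that if your conditioning/duality argument worked as smoothly as you propose, Lemma~\ref{lem:disk_limit} would be unnecessary; its central role in the actual proof is a strong indication that the last step cannot be obtained by soft duality alone.
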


The two parts of Lemma~\ref{qdright} can be viewed as the ``finite-volume'' counterpart of the following two special cases of Theorem~\ref{thm:wedge_welding}: 
\begin{itemize} 
 \item An $\SLE_\kappa (3 \kappa / 2 -6)$ cuts a wedge of weight $3 \gamma^2 /2 -2 $ (the dual of $W$) into a half-plane and a wedge of weight $W'= 3 \gamma^2 /2 - 4$. 
 \item An $\SLE_\kappa (- \kappa /2)$ cuts a wedge of weight $4- \gamma^2 /2$ (the dual of $W'$) into a half-plane and a wedge of weight $W = 2- \gamma^2 /2 $.  
\end{itemize}

In order to keep the pace of the paper going, we choose to give the proof of Lemma~\ref{qdright} in the appendix.

\begin{remark} If we would have started with an entire thin quantum wedge in Lemma~\ref{qdright} and traced the $\SLE_\kappa (\rho)$ in each of the beads, then it is not possible to apply the thin-wedge part of Theorem~\ref{thm:wedge_welding} because of the condition that $W_1$ and $W_2$ have to be positive (i.e., a thin wedge of  weight $\gamma^2 -2$ can not be contained in a thin wedge of weight $2 - \gamma^2 /2 $ because we are in the regime where $\kappa > 8/3$). 
\end{remark}

Let us now discuss some consequences of this lemma: 

\begin {itemize}
 \item 
The first statement of Lemma~\ref{qdright} shows that all the quantum surfaces cut out to the right of the trunk $\eta'$ by $\eta$ and whose boundaries intersect $\R$ are independent quantum disks when conditioned on their respective boundary lengths (i.e., resampling each one does not change the law of this process). 
We should keep in mind that (just as for $S^l$), most of the quantum surfaces in $S^r$ do not actually end up corresponding to connected components that intersect $\R$. However, we can use the stationarity statement of Proposition~\ref{prop:stationarity}, and consider the picture of the quantum half-plane lying in front of $\eta$ after a rational quantum time $q$, and note that for each quantum surface $S_{t_i}$ in $S^r$, there almost surely exists a rational time $q < t_i$ such the corresponding connected component has part of $\eta[0,\tau_q]$ on its boundary. We can then invoke our previous observation to see that the law of this quantum surface conditionally on its boundary length is also a quantum disk.  We can therefore conclude that just as $S^l$, the Poisson point process $S^r$ is a Poisson point process of quantum disks. 
\item
Each excursion of $\eta''$ away from $\eta'$ corresponds to exactly one $\CLE$ loop that is partially traced by $\eta''$. As explained in \cite{cle_percolations}, in order to complete this loop, one has to ``branch inside'' into the pocket disconnected by this excursion of $\eta''$ and $\eta'$, and trace an $\SLE_\kappa (- \kappa /2 )$ process there. This corresponds to tracing an $\SLE_\kappa ( - \kappa /2)$ in the bead of a quantum wedge of weight $3 \kappa / 2 -4$. Part (ii) of Lemma~\ref{qdright} then allows one to conclude that the inside of the traced CLE loop is a quantum disk. Hence, this ``first outside layer of loops'' (the ones whose outer boundaries are part of $\eta''$) is formed of quantum disks.  To conclude that this is the case of the inside of all CLE loops traced by $\eta$, one can just use the very same argument as above (combining the stationarity statement after any given rational time with this fact). 
\end {itemize}
This concludes the proof of the following statement:
\begin {proposition} 
\label {threeindep}
 For a totally asymmetric $\SLE_\kappa (\kappa -6)$ drawn on a independent quantum half-plane, the three Poisson point processes of quantum surfaces $S^l$, $S^r$ and $S^+$ are independent Poisson point processes of quantum disks. 
\end {proposition}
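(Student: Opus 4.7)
The plan is to leverage the welding/cutting results for quantum wedges, combined with the stationarity established in Proposition~\ref{prop:stationarity}, to upgrade from the information about boundary lengths to the statement that the point processes consist of genuine quantum disks. The key inputs are Theorem~\ref{thm:sle_kp_rp} (which describes how an $\SLE_{\kappa'}(\rho_1;\rho_2)$ slices a quantum wedge) and Lemma~\ref{qdright}, together with the loop-trunk decomposition of $\eta$ recalled in Section~\ref{subsec:trunk_construction}.

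First, I would apply Theorem~\ref{thm:sle_kp_rp} to the trunk $\eta'$ (an $\SLE_{\kappa'}(\kappa'-6)$) drawn on the independent quantum half-plane, which slices the half-plane into three independent pieces: a wedge of weight $\gamma^2-2$ on the left, a wedge of weight $2-\gamma^2/2$ between the left/right boundaries of $\eta'$, and a rightmost wedge $\CV$ of weight $2-\gamma^2/2$. The Poisson point process $S^l$ has already been identified with quantum disks in Section~\ref{subsec:unexplored_region}, so the task reduces to $S^r$ and $S^+$, both of which live inside the rightmost wedge $\CV$. By the loop-trunk decomposition, the right boundary $\eta''$ of $\eta$ is a concatenation of independent $\SLE_\kappa(3\kappa/2-6)$ processes, one in each bead of $\CV$, joining its two marked endpoints.

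Next I would apply Lemma~\ref{qdright}(i) bead by bead: conditioned on the left/right boundary lengths of a bead of a weight $2-\gamma^2/2$ wedge, the $\SLE_\kappa(3\kappa/2-6)$ cuts off a quantum disk on its right and a chain of beads of a weight $3\gamma^2/2-4$ wedge on its left. This immediately takes care of those elements of $S^r$ whose connected components touch $\R$. For $S^+$, I would recall that each excursion of $\eta''$ away from $\eta'$ partially traces a $\CLE_\kappa$ loop, which is completed by branching into the corresponding pocket and running an independent $\SLE_\kappa(-\kappa/2)$; this is precisely the setting of Lemma~\ref{qdright}(ii), so conditionally on boundary lengths, the interior of each such loop is a quantum disk.

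The main obstacle is that, as observed in Section~\ref{subsec:unexplored_region}, \emph{most} surfaces in $S^r$ and $S^+$ never correspond to components whose boundary touches $\R$, so the welding lemmas do not apply to them directly. To bypass this, I would invoke the stationarity of Proposition~\ref{prop:stationarity}: for any fixed quantum surface $S_{t_i}$ in $S^r$ (respectively $S^+$), there almost surely exists a rational quantum time $q<t_i$ such that, viewed inside the quantum half-plane lying in front of $\eta$ after time $\tau_q$ (which is again an independent quantum half-plane with independent future point processes), the surface $S_{t_i}$ does touch the boundary at a part of $\eta[\tau_q,\tau_{t_i}]$. Applying the previous step to this half-plane then shows that $S_{t_i}$, conditionally on its boundary length, is a quantum disk. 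Combining this with the already-established fact that $S^l$, $S^r$, $S^+$ are independent Poisson point processes of marked quantum surfaces finishes the proof.
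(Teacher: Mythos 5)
Your proposal follows essentially the same route as the paper: slice the half-plane with the trunk via Theorem~\ref{thm:sle_kp_rp}, identify the right boundary $\eta''$ as a concatenation of $\SLE_\kappa(3\kappa/2-6)$ curves in beads of the rightmost wedge, invoke Lemma~\ref{qdright}(i) and (ii) to handle the $\R$-touching components of $S^r$ and the first layer of $S^+$, and then use the stationarity of Proposition~\ref{prop:stationarity} after rational quantum times to cover the remaining components. One small wording slip: what matters for the stationarity step is that, for some rational $q<t_i$, the component $S_{t_i}$ touches the boundary of the quantum half-plane remaining after time $\tau_q$ (i.e.\ a part of $\R\cup\eta[0,\tau_q]$), not that it touches $\eta[\tau_q,\tau_{t_i}]$ (which is automatic and not the relevant condition); otherwise the argument is the paper's.
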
 

We emphasize at this point that this only determines the intensities of these three Poisson point processes up to the multiplicative constants $a_{l,-}$, $a_{r,-}$ and $a_+$. 

\begin{remark}
It is worth noticing, that using similar arguments as in \cite[Section~9]{dms2014mating}, one can show that it is possible to actually recover the whole initial quantum half-plane from the three Poisson point processes of discovered quantum disks.  The analogous feature will hold true also for the exploration of quantum disks and for the other explorations discussed in the next section. 
\end{remark}

\section {General CLE explorations on quantum half-planes}
\label {S4}

\subsection {Generalization of the previous results} 
\label {41}

We now want to study what happens when one explores/cuts a quantum half-plane with an independent  $\SLE_\kappa^\beta(\kappa-6)$ process for $\beta \in (-1, 1)$. We first explain how to generalize to those explorations the results of 
Section \ref {S3} that were dealing only with the case $\beta=1$ (and by symmetry, it yields the result for $\beta=-1$).

There are two strategies to try to derive the results for these more general explorations. One would be to use the results for the totally asymmetric cases and to consider approximations of the general case by ``side-swapping'' totally asymmetric explorations defined as follows:  For fixed $\eps>0$, one can approximate the law of $\eta$ by that of a process $\eta^\eps$ as follows. Let $p=(1+\beta)/2$. One tosses first a $p$ v.s.\ $1-p$ coin to decide if up to the first time at which the quantum length of its trunk is $\eps$, the process $\eta$ evolves as an $\SLE_\kappa^1 (\kappa- 6)$ process or an $\SLE_\kappa^{-1} (\kappa -6)$ process. Then, one tosses another independent coin to decide about the behavior of $\eta$ up to the time at which the trunk has quantum length $2 \eps$ and so on. This then requires to control well how the quantum surfaces cut out by $\eta$ can be approximated by those that are cut out by $\eta^\eps$. 
The other strategy is to essentially directly derive the results by simply generalizing the proofs that we have presented in the totally asymmetric cases. We opt here for the latter one, since the proofs will actually go along very similar lines:

Consider a quantum half-plane $\CW = (\h,h,0,\infty)$ and an $\SLE_\kappa^\beta (\kappa -6)$ process $\eta$ in $\h$ from $0$ to $\infty$, with $\kappa = \gamma^2 \in (8/3, 4)$. 
Just as in the cases $\beta=1$ and $\beta=-1$, we can consider the point processes of surfaces that are cut out by this process (when parameterized by the quantum length of the trunk) -- the only difference is that this time, some of the CLE loops will be traced clockwise and will lie to the left of the trunk, and some of the CLE loops will be traced counterclockwise and lie to the right of the trunk. 
We define the $\sigma$-field $\CF_{\tau}$ that contains the information about all the quantum surfaces that have been cut out by $\eta[0, \tau]$ from $\infty$. We define $\CG_t := \CF_{\tau_t}$. We then have the following generalization of Proposition \ref {prop:stationarity}:

\begin{proposition}[Stationarity for asymmetric explorations]
\label{prop:stationarity2}
For each $t \geq 0$, the quantum surface parameterized by the unbounded component of 
$\h \setminus \eta([0,\tau_t])$ with marked points $\eta(\tau_t)$ and $\infty$ is a quantum half-plane, which is independent of $\CG_t$.
\end{proposition}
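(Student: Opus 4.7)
I would follow the same scheme as the proof of Proposition~\ref{prop:stationarity}, but with a \emph{pair} of flow lines $F_L, F_R$ replacing the single flow line $F$ used in the totally asymmetric case, as foreshadowed at the end of Section~\ref{Imgeom}. For $\beta\in(-1,1)$, both outer boundaries of $\eta[0,\tau_t]$ contain nontrivial $\SLE_\kappa$-type arcs (because $\CLE_\kappa$ loops are traced on either side of the trunk), so each side must be generated by its own flow line of an appropriate angle inside a larger auxiliary quantum wedge.

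\textbf{Setup and boundary identification.} Start with an independent quantum wedge $\wt\CW$ of weight $W_0$ (to be chosen) and apply Theorem~\ref{thm:wedge_welding}(i) twice to slice off, by two independent $\SLE_\kappa(\cdot)$-type curves $F_L, F_R$ from $\infty$ to $0$, two outer ``flanges'' that are themselves quantum wedges, leaving between $F_L$ and $F_R$ a quantum half-plane which we identify with $\CW$. The weight $W_0$ and the $\rho$-parameters of $F_L, F_R$ (equivalently, their imaginary-geometry angles) are calibrated so that, in the coupling of \cite{MS_IMAG}, $F_L$ and $F_R$ are precisely the flow lines of the GFF on $\wt\CW$ naturally associated with the left and right outer boundaries of an $\SLE_\kappa^\beta(\kappa-6)$ whose trunk is the $\SLE_{\kappa'}(\rho';\kappa'-6-\rho')$ process of Theorem~\ref{thm:cle_perc_trunk} with $\rho'=\rho'(\beta,\kappa)$. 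One then realizes the trunk $\eta'$ as the counterflow line of this same GFF between $F_L$ and $F_R$; by the loop-trunk description recalled in Section~\ref{subsec:trunk_construction}, the left and right outer boundaries of $\eta[0,\tau_t]$ coincide with the initial arcs of $F_L$ and $F_R$ up to their respective last hitting times of $\eta[0,\tau_t]$.

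\textbf{Stationarity and independence.} Apply Theorem~\ref{thm:sle_kp_rp} to $\eta'[0,\tau'_t]$ slicing $\wt\CW$: the unbounded component $H_t$ of $\wt\CW\setminus\eta'[0,\tau'_t]$ is again a quantum wedge of weight $W_0$, independent of the quantum surfaces cut off by $\eta'[0,\tau'_t]$. The imaginary-geometry change-of-coordinates formula then shows that the continuations $F_L', F_R'$ of $F_L, F_R$ inside $H_t$, starting from $\eta'(\tau'_t)$, are fresh flow lines of the same angles in this new wedge. Consequently, the unbounded component of $\h\setminus\eta[0,\tau_t]$ with marked points $\eta(\tau_t)$ and $\infty$ is the region of $H_t$ sandwiched between $F_L'$ and $F_R'$, which by the very same welding construction as in the setup step is a quantum half-plane. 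The independence from $\CG_t$ follows because every quantum surface generating $\CG_t$ lies either in a piece cut off $\wt\CW$ by $\eta'[0,\tau'_t]$ or in a bead trapped between $\eta'[0,\tau'_t]$ and a segment of $F_L$ or $F_R$, and is in either case measurable with respect to a $\sigma$-algebra independent of $(H_t, F_L', F_R')$.

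\textbf{Main obstacle.} The delicate step is the calibration in the setup: one needs to verify that the two flow lines $F_L, F_R$ of the GFF on $\wt\CW$ can indeed be chosen so as to realize the left and right outer boundaries of an $\SLE_\kappa^\beta(\kappa-6)$ of the prescribed asymmetry. Matching the GFF boundary data with the two-force-point structure $(\rho',\kappa'-6-\rho')$ of the trunk requires the two-flow-line imaginary-geometry coupling alluded to at the end of Section~\ref{Imgeom}, together with the loop-trunk coupling of \cite{cle_percolations}. Fortunately, only the qualitative existence of some $\rho'(\beta)\in[\kappa'-6,0]$ granted by Theorem~\ref{thm:cle_perc_trunk} is needed here; the quantitative identification of $\rho'(\beta)$ (Theorem~\ref{thm:beta_rho}) is logically downstream of the present proposition.
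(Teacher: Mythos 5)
Your proposal is correct and follows essentially the same scheme as the paper's proof: embed the quantum half-plane between two flow lines $F_L, F_R$ of an auxiliary GFF inside a larger quantum wedge, realize the trunk $\eta'$ as the counterflow line of the same GFF (so that conditionally on $F_L, F_R$ it is an $\SLE_{\kappa'}(\rho';\kappa'-6-\rho')$ in the middle region), and use the imaginary-geometry change-of-coordinates formula to show that the region between the continuations of $F_L, F_R$ in the unexplored wedge is again a quantum half-plane. One correction: the stationarity at a finite quantum-length time --- that $H_t$ is again a wedge of the same weight, independent of the surfaces already cut off --- is Theorem~\ref{thm:sle_kp_explore}(ii), not Theorem~\ref{thm:sle_kp_rp}; this in particular pins down $W_0 = 3\gamma^2/2 - 2$ so that $\eta'$ is a plain $\SLE_{\kappa'}$ in $\wt\CW$.
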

\begin{proof}
We will use a combination of three inputs: (a) The loop-trump decomposition of $\SLE_\kappa^\beta (\kappa -6)$ -- in this proof, $\rho'$ will be chosen to be the value such that the trunk of such a process is an $\SLE_{\kappa'} (\rho'; \kappa' -6 - \rho')$ [this value is at this point unknown, but we know from Theorem \ref{thm:cle_perc_trunk} that it exists]. (b) The decomposition of wedges results like Theorem \ref {thm:wedge_welding}. (c) The imaginary geometry couplings from \cite {MS_IMAG} that allow to couple various SLE-type curves with each other (via an auxiliary GFF).

As in the proof of Proposition \ref {prop:stationarity}, we will construct the quantum half-plane as a subset of
a weight ${3\gamma^2}/{2}-2$ quantum wedge $\wt{\CW} = (\h,\wt{h},0,\infty)$.  
Let us first consider an independent $\SLE_{\kappa'}$ process $\eta'$ in $\h$ from $0$ to $\infty$ which is subsequently parameterized by its quantum length.  For each $t \geq 0$, let $\h_t$ be the unbounded component of $\h \setminus \eta'([0,t])$.  Then,  we know that the quantum surface $(\h_t,h,\eta'(t),\infty)$ is a $3\gamma^2/2-2$ quantum wedge.

If $f_t$ denotes the unique conformal map from $\h_t$ to $\h$ such that $f_t(z)/z \to 1$ as $z \to \infty$ and $f_t(\eta'(t)) = 0$, then this means that
\[ \wt{h}_t := \wt{h} \circ f_t^{-1} + Q \log|(f_t^{-1})'| \] 
has the same law of $\wt {h}$
(when we view the left and right sides modulo a global rescaling).  That is, $(\h,\wt{h}_t,0,\infty)$ is a weight $3 \gamma^2/{2}-2$ quantum wedge.  

Instead of coupling $\eta'$ with one SLE-type curve $F$ that lies to the right of $\eta'$ as in Section~\ref{Imgeom} and in the proof of Proposition~\ref{prop:stationarity}, we are now going to couple $\eta'$ with two curves $F_R$ and $F_L$ that respectively lie to its right and to its left. The domain in between these two curves will be a quantum-half plane, and conditionally on $F_R$ and $F_L$, the process $\eta'$ will be an $\SLE_{\kappa'} (\rho'; \kappa'-6- \rho')$ in this domain. 
The totally asymmetric case as treated in the proof of Proposition \ref {prop:stationarity} corresponds to the case where $F_L$ is a the negative half-line and $\rho'=0$.

To describe the coupling between $F_R$, $F_L$ and $\eta'$, it is convenient to view them as all obtained via the imaginary geometry from the same auxiliary GFF:  
We consider a GFF $h^\IG$ on $\h$ with (Dirichlet) boundary conditions given by $\lambda'$ on $\R_-$ and by $-\lambda'$ on $\R_+$ that is independent of $\wt {h}$, and we realize $\eta'$ as the counterflow line of $h^\IG$ from $0$ to $\infty$.

We now realize $F_L$ and $F_R$ as the flow lines from $\infty$ to $0$ with some well-chosen respective angles 
$\theta_L$ and $\theta_R$ as described below. For this specific choice, we know in particular from \cite {MS_IMAG} that: 
(i) The marginal law of $F_L$ is that of an $\SLE_\kappa (\rho_1; \rho_2)$ for some explicit values $\rho_1, \rho_2$. 
(ii) The conditional law of $F_R$ given $F_L$ is that of an $\SLE_\kappa (\rho)$ in the domain that lies to the right of $F_L$, for some explicit value $\rho$. (iii) The curve $\eta'$ stays in the ``middle'' domain inbetween $F_L$ and $F_R$ conditional law of $\eta'$ given $F_R$ and $F_L$  is that of an $\SLE_{\kappa'} ( \rho'; \kappa'-6- \rho')$ in this domain (for the particular value of $\rho'$ related to $\beta$).

To be specific: 
Choosing $\theta_L$ and $\theta_R$ (with conventions as described in Figure \ref {fig:angle_convention} -- a bit of care is needed as we are here in the ``opposite'' setup as in \cite {MS_IMAG}: the flow lines are going from $\infty$ to $0$ and the counterflow line $\eta'$ is going from $0$ to $\infty$) in such a way that $\theta_R - \theta_L =  {2\pi(6-\kappa)}/(4-\kappa)$ ensures that (iii) holds for some value $\rho'$, and choosing $\theta_L$ correctly ensures that (iii) holds for the particular value $\rho'$ related to $\beta$.   

If one then combines this with Theorem \ref {thm:wedge_welding} (applied twice, using (i) and (ii)), we then get that 
$F_L$ and $F_R$ divide the quantum wedge $\wt{\CW}$ into three independent wedges, and the choice of $\theta_R-\theta_L$ ensures that the quantum wedge inbetween $F_L$ and $F_R$ is a quantum half-plane. 

Similarly, for any positive $t$, we can apply the same procedure in the quantum wedge $(\h,\wt{h}_t,0,\infty)$.  
We can define the GFF $h_t^\IG$ in $\h$ obtained from $h^\IG$ via the imaginary geometry change of coordinates formula 
\[ h_t^\IG := h^\IG \circ f_t^{-1} - \chi \arg (f_t^{-1})',\]
where $\chi =  2/\sqrt{\kappa} - \sqrt{\kappa}/2 $. 
Then we define $\wt{F}_L^t$ (resp.\ $\wt{F}_R^t$) be the flow line of $h_t^\IG$ from $\infty$ to $0$ with angle $\theta_L$ (resp.\ $\theta_R$).  Then the quantum surface parameterized by the region of $\h \setminus (\wt{F}_t^L \cup \wt{F}_t^R)$ which is between $\wt{F}_t^L$ and $\wt{F}_t^R$ is again a quantum half-plane.  

Let $F_t^L = f_t^{-1}(\wt{F}_t^L)$ and $F_t^R = f_t^{-1}(\wt{F}_t^R)$.  We can then observe that  $F_t^L$ (resp.\ $F_t^R$) agrees with $F_L$ (resp.\ $F_R$) until it first hits $\eta'([0,t])$.  Moreover, we have that the region disconnected from $\infty$ in the middle region of $\h \setminus (F_L \cup F_R)$ by $F_t^L$ and $F_t^R$ is a quantum half-plane.  This proves the result since this corresponds exactly to the hull of an $\SLE_\kappa^\beta(\kappa-6)$ (here we use the loop-trunk decomposition of these processes) i.e. it shows that 
the unbounded component of 
$\h \setminus \eta([0,\tau_t])$ with marked points $\eta(\tau_t)$ and $\infty$ is a quantum half-plane.

\begin{figure}[ht!]
\begin{center}
\includegraphics[scale=0.85]{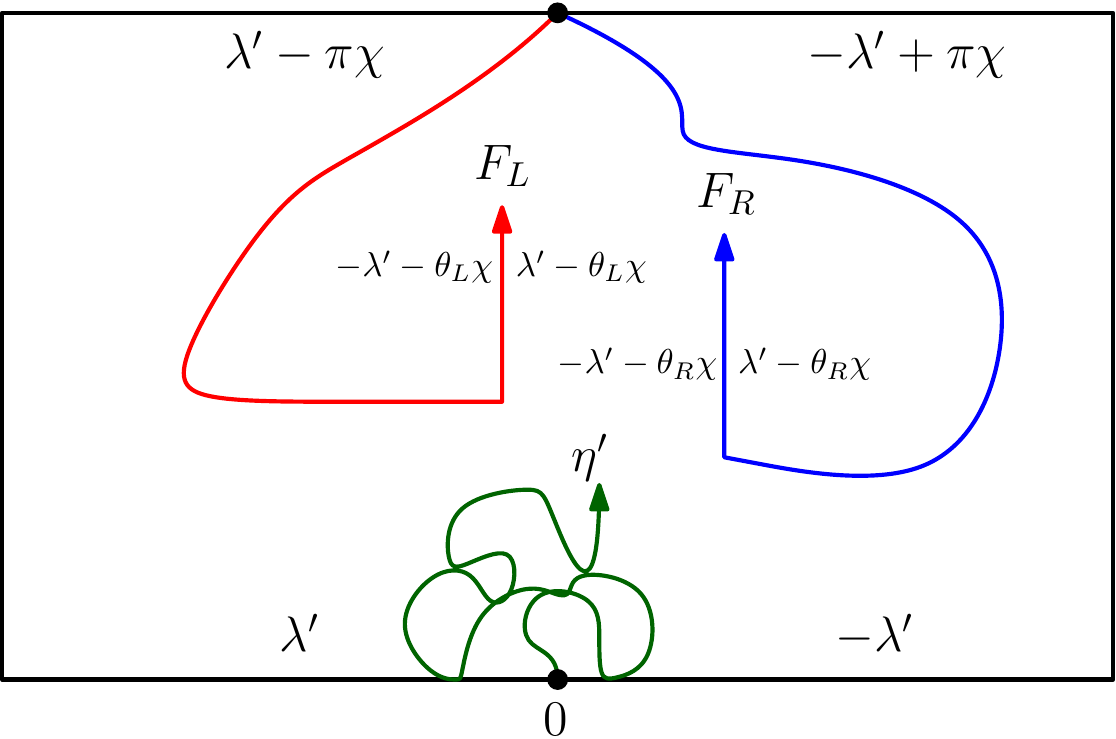}	
\end{center}
\caption{\label{fig:angle_convention}  The convention for the angles of the flow-lines $F_R$ and $F_L$ (with $\h$ described by a rectangle) from $\infty$ to $0$ -- together with some
the GFF boundary data. When we say that $F_L$ is the flow line from $\infty$ with angle $\theta_L$, we mean that if the path wraps around {\em to the right} then the boundary data is $-\lambda'-\theta_L \chi$ (resp.\ $\lambda' - \theta_L \chi$) on its left (resp.\ right) side when it is traveling north.  When we say that $F_R$ is the flow line from $\infty$ with angle $\theta_R$, we mean that if the path wraps around to the left then the boundary data is $-\lambda' - \theta_R \chi$ (resp.\ $\lambda' - \theta_R \chi$) on its left (resp.\ right) side when it is traveling north. For this convention, when $\theta_R - \theta_L =  {2\pi(6-\kappa)}/(4-\kappa)$ -- the piece inbetween the two curves is an quantum half-plane} 
\end{figure}

The fact that this quantum half-plane is independent of $\CG_t$ is derived exactly as in the case of Proposition~\ref{prop:stationarity}.
\end{proof}

We then define $L_t$ (resp.\ $R_t$) to be  the change in the boundary length of the left (resp.\ right) side of the unbounded component of $\h \setminus \eta([0,\tau_t])$ relative to the boundary lengths at time $0$. 
Exactly as in Section \ref {S31}, Proposition \ref{prop:stationarity2} implies the following statement: 
\begin {corollary} 
The processes
 $(L_t)_{t \ge 0}$ and $(R_t)_{t \ge 0}$ are independent $\alpha$-stable L\'evy processes
\end {corollary}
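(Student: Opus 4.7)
The plan is to follow almost verbatim the three-step argument used for the corollary in Section \ref{S31} (totally asymmetric case), now invoking Proposition \ref{prop:stationarity2} in place of Proposition \ref{prop:stationarity}. None of the intermediate steps require $\beta = 1$.

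First, I would deduce that each of $L$ and $R$ is a L\'evy process. By construction, $L_{s+t} - L_s$ is a measurable function of the quantum surfaces cut off by $\eta$ between quantum-trunk times $s$ and $s+t$, which in turn (by the loop-trunk decomposition and Proposition \ref{prop:stationarity2}) is a measurable function of the quantum half-plane $\h \setminus \eta([0,\tau_s])$ decorated by the remainder of $\eta$. Since that decorated surface is a fresh quantum half-plane with an independent $\SLE_\kappa^\beta(\kappa-6)$ process, independent of $\CG_s$, the increments of $L$ are stationary and independent. The same holds for $R$.

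Next, I would argue independence of $L$ and $R$. By the loop-trunk decomposition, a jump in $L$ (resp.\ $R$) occurs either when the trunk disconnects a component on its left (resp.\ right), or when a CLE loop lying to the left (resp.\ right) of the trunk is discovered. A simultaneous jump would force the trunk to close off two components on opposite sides at the same instant, or to close off a component and discover a loop on the opposite side simultaneously. By the almost sure continuity of the trunk (an $\SLE_{\kappa'}(\rho';\kappa'-6-\rho')$ process) and the fact that its intersections with the boundary of the unexplored region have Lebesgue measure zero in the quantum length parameterization, this does not occur almost surely. The classical fact that two L\'evy processes without common jumps, coupled via a filtration to which both are adapted, are independent (see, e.g., \cite[Chapter~0, Section~4]{bertoin96levy}) then gives the claim.

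Finally, I would obtain $\alpha$-stability from the LQG scaling. Recall from Remark \ref{GQLRemark} that rescaling the embedding so that the boundary-length measure on $\partial \h$ is multiplied by $u > 0$ multiplies the quantum length of the (non-simple) $\SLE_{\kappa'}$-type trunk by $u^\alpha$. Hence if we scale the quantum half-plane accordingly, the process $(L_t)_{t \ge 0}$ becomes $(u L_{u^{-\alpha} t})_{t \ge 0}$; but this scaling preserves the law of the quantum half-plane and leaves the independent $\SLE_\kappa^\beta(\kappa-6)$ invariant in law, so $(u L_{u^{-\alpha} t})_{t \ge 0} \stackrel{d}{=} (L_t)_{t \ge 0}$, i.e.\ $(u L_t)_{t \ge 0} \stackrel{d}{=} (L_{u^\alpha t})_{t \ge 0}$. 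Combined with the L\'evy property, this is the defining scaling relation of an $\alpha$-stable L\'evy process. The same argument applies to $R$.

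The only real subtlety I expect is verifying the absence of simultaneous jumps in the asymmetric setting, where both sides of the trunk can now produce positive jumps (from discovered CLE loops). However, this still reduces to the continuity of the trunk and the fact that its loop-attachment times (encoded by the Bessel-type excursion structure from \cite{cle_percolations}) on the two sides are driven by essentially disjoint mechanisms; one can also deduce the no-common-jumps property directly from Proposition \ref{prop:stationarity2} by noting that a common jump would force a positive-probability atom at $t=0$ for a joint jump in the stationary setup, which is incompatible with the quantum-disk boundary lengths having a diffuse distribution.
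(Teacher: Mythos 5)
Your proposal follows exactly the paper's argument: the paper dispatches this corollary with the phrase ``Exactly as in Section~\ref{S31}, Proposition~\ref{prop:stationarity2} implies the following statement,'' and the Section~\ref{S31} proof it points to is precisely your three steps (stationary independent increments from the stationarity proposition, independence of $L$ and $R$ from absence of simultaneous jumps citing \cite[Chapter~0, Section~4]{bertoin96levy}, and $\alpha$-stability from the Remark~\ref{GQLRemark} scaling of non-simple quantum lengths). Your elaborations on the no-common-jump point and the scaling bookkeeping are correct and consistent with the paper's brief treatment.
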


We now look at the Poisson point process of quantum surfaces that are cut out by the $\SLE_\kappa^\beta (\kappa -6)$. There are now four independent point processes to consider, corresponding to the quantum surfaces cut out by the trunk to its left, to those cut out by the trunk to its right, to  those encircled by CLE loops that lie to the left of the trunk, and to those that are encircled by CLE loops that lie to the right of the trunk -- we denote these point processes respectively by $S^{l,-}$, $S^{r, -}$, $S^{l, +}$ and $S^{r,+}$. 
 
Then exactly as in the final part of Section \ref {S32}, we see that Proposition \ref{prop:stationarity2} implies that
the four point processes $S^{l,-}$, $S^{r, -}$, $S^{l, +}$ and $S^{r,+}$ are independent Poisson point processes of marked quantum surfaces. By definition, the jumps 
of $R$ and $L$ correspond exactly to the boundary lengths of these four point processes of quantum surfaces. 
The four Poisson point processes of boundary lengths of these quantum surfaces have intensities that are multiples of $dl/l^{\alpha +1}$. We denote the four multiplicative constants  by $a_{l,-}$, $a_{r,-}$, $a_{l,+}$ and $a_{r,+}$ (with obvious notation). Note that all these multiplicative constants do depend also on $\beta$. We will also define 
\[ a_+ := a_{l, +} + a_{r, +} \quad\text{and}\quad a_- := a_{l, -} + a_{r, -},\]
which describe the intensities of positive and negative jumps of the stable process $(R_t + L_t)_{t \ge  0}$.

We can remark at this point that for each given CLE loop that is hit by the exploration mechanism, the fact that it will lie to the left or to the right of the trunk does not depend of the behavior of the exploration mechanism until it hits that loop. The conditional probability that the loop is on the left of the trunk (and therefore corresponds to a positive jump of $L$) is always  $(1-\beta)/2$. Hence, 
\begin {equation} 
\label {firstrelation}
a_{l,+}  = \frac { 1- \beta}{2} a_+ \quad\text{and}\quad a_{r,+} = \frac {1 + \beta}2 a_+.
\end {equation} 

\jason{
The next step is to show that these four point processes of quantum surfaces are Poisson point processes of quantum disks. 
This will be a consequence of the following generalization of Lemma~\ref{qdright}.

\begin{lemma} 
\label{qdrighttwo}
\label{QDRIGHTTWO}
Fix $\rho \in (-2,\kappa-4)$ (which we recall is the range of $\rho$ values so that $\BCLE_\kappa(\rho)$ is defined for $\kappa \in (2,4)$).
\begin{enumerate}[(i)]
 \item 
Consider a bead of a quantum wedge of weight $W= \gamma^2-(\rho+4)$ conditioned on its right and left boundary lengths. Draw an independent $\SLE_\kappa(\rho)$ from one of its marked points to the other one. Then, conditionally on its boundary length, the surface cut out to the right of the path is a quantum disk, and conditionally on their (right and left) boundary lengths, the surfaces cut out to its left are beads of a quantum wedge of weight $W'= \rho+2$.    
 \item 
Consider a bead of a quantum wedge of weight $W'= \rho+2$, conditioned on its right and left boundary lengths. Draw an independent $\SLE_\kappa(\kappa-6-\rho)$ from one of its marked points to the other one. Then, conditionally on its boundary length, the surface cut out to the right of the path is a quantum disk, and conditionally on their (right and left) boundary lengths, the surfaces cut out to its left are beads of a quantum wedge of weight $W= \gamma^2-(\rho+4)$.  
\end{enumerate}
\end{lemma}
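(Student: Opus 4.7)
My plan is to first reduce part (ii) to part (i) by the involution $\rho \mapsto \kappa - 6 - \rho$ on $(-2, \kappa - 4)$, which interchanges the weights $W = \gamma^2 - (\rho + 4)$ and $W' = \rho + 2$ and maps $\SLE_\kappa(\rho)$ to $\SLE_\kappa(\kappa - 6 - \rho)$. It therefore suffices to prove part (i), which also directly generalizes the $\rho = 3\kappa/2 - 6$ case of Lemma~\ref{qdright}(i).

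For part (i), I would recognize the claim as the finite-volume counterpart of a wedge welding identity, exactly as the preceding text describes Lemma~\ref{qdright} as the finite-volume counterpart of special cases of Theorem~\ref{thm:wedge_welding}. Concretely, consider a thick quantum wedge $\widetilde{\CW}$ of weight $\gamma^2 - W = \rho + 4$; for $\rho \in (-2, \kappa - 4)$ and $\kappa \in (8/3, 4)$ one has $\rho + 4 \in (2, \kappa) \subset (\gamma^2/2, \gamma^2)$, so $\widetilde{\CW}$ is in the thick range. Applying Theorem~\ref{thm:wedge_welding}(i) with $\rho_1 = \rho$ and $\rho_2 = 0$ (so that $\rho_1 + \rho_2 + 4 = \rho + 4$ matches the weight), an independent $\SLE_\kappa(\rho)$ from $0$ to $\infty$ in $\h$ cuts $\widetilde{\CW}$ into two independent quantum wedges, of weights $\rho + 2 = W'$ on the left and $2$ on the right. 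Since $W' \in (0, \gamma^2 - 2) \subset (0, \gamma^2/2)$ for $\kappa < 4$, the left wedge is a thin quantum wedge (a Poisson chain of beads), while the right is a quantum half-plane.

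It remains to descend from $\widetilde{\CW}$ to a single bead of the thin wedge of weight $W$ conditioned on its left and right boundary lengths. This uses the thick--thin Bessel duality recalled in Section~\ref{S.Background}: the thick wedge of weight $\rho + 4$ is encoded by a Bessel process of dimension $\delta^* = 1 + 2(\rho + 4)/\gamma^2 \in (2, 3)$, while beads of the thin wedge of weight $W$ correspond to excursions of a Bessel process of dimension $4 - \delta^*$. At the level of quantum surfaces this provides a disintegration of the law of $\widetilde{\CW}$ over the pair of boundary lengths, whose conditional distribution is precisely that of a bead of the thin wedge of weight $W$ with those prescribed boundary lengths (see Remark~\ref{beadboundaries}). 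Because the $\SLE_\kappa(\rho)$ is independent of $\widetilde{\CW}$ and the welding identity of Theorem~\ref{thm:wedge_welding}(i) is preserved under this conditioning, the infinite-volume decomposition transfers verbatim to the bead: the right piece becomes a quantum half-plane conditioned on its boundary length, which is by definition a quantum disk (the bead of the weight-$(\gamma^2 - 2)$ thin wedge dual to the weight-$2$ half-plane), and the left piece becomes a finite chain of beads of the thin wedge of weight $W' = \rho + 2$ conditioned on their boundary lengths.

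The main obstacle is making this descent rigorous, i.e.\ verifying that conditioning $\widetilde{\CW}$ on its pair of boundary lengths does correspond, via the Bessel-duality identification, to the law of a bead of the thin wedge of weight $W$, and that this conditioning is compatible with the independent $\SLE_\kappa(\rho)$ coupling so that the conditional laws of the two cut-out pieces given their boundary lengths are exactly the claimed ones. This is cleanest to handle via the orthogonal decomposition of the free-boundary GFF into a drift/boundary-average component (which carries the boundary-length conditioning) and a fluctuation component (which supports the $\SLE$ welding), as in \cite[Section~4]{dms2014mating}. The specific cases $\rho = 3\kappa/2 - 6$ and $\rho = -\kappa/2$ treated in the appendix proof of Lemma~\ref{qdright} provide the exact template, and the present generalization is obtained by the same argument with those specific values replaced by the general $\rho \in (-2, \kappa - 4)$.
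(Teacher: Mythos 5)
Your reduction of part (ii) to part (i) via the involution $\rho \mapsto \kappa-6-\rho$ is correct and clean. However, your route to part (i) — cut a \emph{thick} quantum wedge of weight $\rho+4$ with an independent $\SLE_\kappa(\rho;0)$ via Theorem~\ref{thm:wedge_welding}(i), and then descend to a single bead via the Bessel-process duality — is not the one the paper uses, and the descent step is precisely where your argument has a real gap rather than a technicality you can defer. A thick wedge has infinite boundary length on both sides of the marked point, so ``conditioning $\widetilde{\CW}$ on its pair of boundary lengths'' is not a disintegration over a positive-measure family; what the thick/thin Bessel duality actually gives is a singular conditioning (a Bessel process of dimension $\delta^*>2$ ``conditioned to return to $0$''), and it is not at all automatic that this conditioning commutes with the independent $\SLE_\kappa(\rho)$ welding or that it turns the weight-$2$ half-plane on the right simultaneously into a quantum disk. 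Making this compatible conditioning rigorous is exactly the content of the lemma, so deferring it is circular. In particular your assertion that ``the right piece becomes a quantum half-plane conditioned on its boundary length, which is by definition a quantum disk'' is not a definition anywhere in the paper and would itself require proof.

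The paper's actual argument avoids the infinite-volume conditioning entirely. It works from the start inside a doubly marked quantum disk (a bead of the weight $\gamma^2-2$ thin wedge), draws an independent $\SLE_\kappa(\rho;\kappa-6-\rho)$, and invokes the \emph{thin}-wedge version Theorem~\ref{thm:wedge_welding}(ii) to identify the left and right components as beads of weight $\rho+2$ and $\gamma^2-(\rho+4)$ respectively. It then uses target-independence (coupling $\eta$ with $\eta_x$ for a boundary-typical point $x$) to isolate a single bead of weight $W$ equipped with an independent $\SLE_\kappa(\rho)$ inside it, giving the ``left surfaces are beads of weight $W'$'' claim; and it establishes the ``right surface is a quantum disk'' claim by a limiting argument — running the interior $\SLE_\kappa(\rho)$ almost to its endpoint and applying Lemma~\ref{lem:disk_limit}, which says that a bead of weight $W\in(0,\gamma^2-2)$ with one boundary arc of length $\epsilon\to 0$ converges to a quantum disk. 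This finite-volume, limit-based strategy is structurally different from, and sidesteps the difficulties of, the Bessel-duality conditioning you propose; your final sentence claiming the appendix provides ``the exact template'' for your approach is therefore not accurate.
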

Again, we will explain how to prove this lemma in the appendix. 
In exactly the same way as in Section \ref {S32}, Lemma~\ref{qdrighttwo} then implies the following 
generalization of Proposition \ref {threeindep}.}
\begin {proposition} 
\label {fourindep}
\label {FOURINDEP}
 The four Poisson point processes of quantum surfaces $S^{l,-}$, $S^{r,-}$, $S^{l,+}$ and $S^{r,+}$  are independent Poisson point processes of quantum disks. 
\end {proposition}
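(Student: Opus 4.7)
The plan is to repeat the argument from Section~\ref{S32} verbatim, with Lemma~\ref{qdrighttwo} replacing Lemma~\ref{qdright}. As already noted in the text preceding the proposition, the four point processes are known to be independent Poisson point processes of marked quantum surfaces, because Proposition~\ref{prop:stationarity2} gives stationarity of the unexplored quantum half-plane after any fixed trunk-time and two distinct surfaces are never cut off at the same trunk-time (by the continuity of the trunk together with the conformal Markov property of $\SLE_\kappa^\beta(\kappa-6)$ along its trunk). Hence the only remaining task is to identify, for each of the four processes, the conditional law of a sampled surface given its boundary length as that of a quantum disk.

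The first step is to apply the appropriate generalization of Theorem~\ref{thm:sle_kp_rp} to the trunk $\eta'$, which by Theorem~\ref{thm:cle_perc_trunk} is an $\SLE_{\kappa'}(\rho'; \kappa'-6-\rho')$, in order to decompose the quantum half-plane along the two sides of $\eta'$ into three independent thin quantum wedges: a ``left'' wedge whose beads touch $\R_-$, a ``right'' wedge whose beads touch $\R_+$, and a ``middle'' wedge sandwiched between the two sides of $\eta'$. The beads of the left and right wedges are quantum disks given their boundary lengths, which takes care of every element of $S^{l,-}$ and $S^{r,-}$ whose boundary intersects $\R$. The loop-trunk decomposition of $\SLE_\kappa^\beta(\kappa-6)$ recalled in Section~\ref{subsec:trunk_construction} then describes $\eta$ inside the middle wedge as a concatenation of $\SLE_\kappa(\rho_L)$ and $\SLE_\kappa(\rho_R)$ outer-boundary processes together with, inside each pocket cut off by an excursion of one of these boundaries, an independent $\SLE_\kappa(-\kappa/2)$ closing the corresponding CLE loop. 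Applying part~(i) of Lemma~\ref{qdrighttwo} to the two outer-boundary processes shows that the remaining surfaces in $S^{l,-}$ and $S^{r,-}$ (those whose boundary touches only the trunk) are quantum disks given their boundary length, and exhibits each pocket as a bead of a quantum wedge of weight $\rho_\bullet+2$. Applying part~(ii) to the $\SLE_\kappa(-\kappa/2)$ inside each pocket (with $\rho = 3\kappa/2-6$, so that $\kappa-6-\rho = -\kappa/2$ and $\rho+2 = 3\gamma^2/2-4$) identifies the interior of each traced CLE loop, and hence each element of $S^{l,+}$ and $S^{r,+}$ touching $\R$, as a quantum disk given its boundary length.

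To pass from surfaces whose boundary touches $\R$ to arbitrary surfaces in the four processes, I would invoke Proposition~\ref{prop:stationarity2} exactly as at the end of Section~\ref{S32}: for any surface $S_{t_i}$ there almost surely exists a rational $q<t_i$ such that after running the exploration up to $\tau_q$ one sees a fresh quantum half-plane in which the component corresponding to $S_{t_i}$ has part of its boundary on the new ``real line'', and the previous analysis then applies. The main obstacle is simply the bookkeeping: one must verify that the weights $\rho_L$, $\rho_R$ and $\rho'$ furnished by the loop-trunk construction of $\SLE_\kappa^\beta(\kappa-6)$ line up with the input wedge-weights demanded by Lemma~\ref{qdrighttwo} on both sides of the trunk. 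This should be essentially automatic, since the $\SLE$ parameters in the loop-trunk decomposition are exactly those arising from the $\BCLE$/imaginary-geometry construction of $\SLE_\kappa^\beta(\kappa-6)$ from \cite{cle_percolations}, and the totally asymmetric case already treated in Section~\ref{S32} corresponds to the $\beta=1$ specialization of this same framework.
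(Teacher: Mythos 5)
Your high-level plan — invoke Proposition~\ref{prop:stationarity2} for the Poisson point process structure and then identify the conditional laws of the cut-off surfaces by combining Theorem~\ref{thm:sle_kp_rp}, the loop-trunk decomposition and Lemma~\ref{qdrighttwo}, with a stationarity argument to cover surfaces not touching $\R$ — is exactly the generalization of Section~\ref{S32} that the paper intends. But the concrete identifications in your middle step do not hold up.

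The central error is the claim that ``the beads of the left and right wedges are quantum disks given their boundary lengths.'' Applying Theorem~\ref{thm:sle_kp_rp} to the trunk $\eta'$, an $\SLE_{\kappa'}(\rho';\kappa'-6-\rho')$, produces a left wedge of weight $W_1 = \gamma^2-2+\gamma^2\rho'/4$ and a right wedge of weight $W_2 = \gamma^2-2+\gamma^2(\kappa'-6-\rho')/4$. For $\beta\in(-1,1)$ both $\rho'$ and $\kappa'-6-\rho'$ lie in $(\kappa'-6,0)$, so $W_1,W_2<\gamma^2-2$ and neither family of beads consists of quantum disks. (Even for $\beta=1$ in Section~\ref{S32} this only holds on the left; the right wedge $\CV$ has weight $2-\gamma^2/2$ and Lemma~\ref{qdright}(i) is precisely what is needed to extract quantum disks from it.) Relatedly, the left and right outer boundaries of $\eta$ — the concatenations of $\SLE_\kappa(\rho_L)$ and $\SLE_\kappa(\rho_R)$ arcs — live \emph{inside} the beads of the left and right wedges respectively, not in the middle components surrounded by $\eta'$. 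The paper's argument is to apply Lemma~\ref{qdrighttwo}(i) inside each bead of the left wedge with $\rho=\rho_L$ determined by $W_1 = \gamma^2-(\rho_L+4)$ (and symmetrically on the right), which yields both the quantum disks of $S^{l,-}$ touching $\R_-$ and the pockets of weight $\rho_L+2$; then part~(ii) applies inside the pockets with the same $\beta$-dependent $\rho_L$ (so the curve is $\SLE_\kappa(\kappa-6-\rho_L)$, which reduces to the $\SLE_\kappa(-\kappa/2)$ of your write-up only when $\beta=1$, i.e.\ $\rho_L=3\kappa/2-6$). By declaring the left and right wedges done outright and relocating the entire loop-trunk analysis to the middle wedge, your proof never arranges the weights and $\SLE$ parameters so that Lemma~\ref{qdrighttwo} can actually be invoked.
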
 

We emphasize again that at this point that we have not determined the relative intensities of these four Poisson point processes of quantum disks (except for $S^{l,+}$ and $S^{r, +}$).

\subsection {Conditional proof of the relation between $\beta$ and $\rho'$} 

The following paragraphs are now devoted to the (conclusion of the) proofs of Theorems~\ref {mainthm<4}, \ref{mainthm<4-22}  and \ref{thm:beta_rho} (the ratios between the intensities of the Poisson point processes of quantum disks described in the previous sections, and the law of the trunk of $\eta$), 
{\em assuming the following statement} that we will derive in Section \ref {Scons} as a consequence of the analysis of exploration of CLEs drawn on 
quantum disks. 

\begin {lemma} 
\label {a-a+}
For any $\beta \in [-1, 1]$, one has $a_{l,-} = a_{r,-}$ (i.e., the intensities of negative jumps of $R$ and of $L$ are the same). 
\end {lemma}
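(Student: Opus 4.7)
The plan is to prove Lemma~\ref{a-a+} as a by-product of the quantum disk analysis developed in Section~\ref{Scons}. First, I would formulate an analogue of Proposition~\ref{FOURINDEP} for the $\SLE_\kappa^\beta(\kappa-6)$ exploration on a quantum disk with two marked boundary points $A$ and $B$; by zooming into the neighborhood of $A$ via the scaling/Markov structure of the disk, the boundary-length intensities of the four Poissonian families of cut-off quantum surfaces on the disk are controlled by the same constants $a_{l,\pm}(\beta),a_{r,\pm}(\beta)$ as in the half-plane. Next, I would exploit the anti-conformal reflection of the quantum disk which fixes $A$ and $B$ and swaps the two boundary arcs: under this reflection the LQG measure is preserved, the $\CLE_\kappa$ has the same law, and the $\SLE_\kappa^\beta(\kappa-6)$ exploration maps to an $\SLE_\kappa^{-\beta}(\kappa-6)$ exploration with its two sides interchanged (the sign flip of $\beta$ comes from CLE loop orientations reversing under an anti-conformal map). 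This yields the identity $a_{l,-}(\beta) = a_{r,-}(-\beta)$, reducing the problem to showing that $\beta \mapsto a_{l,-}(\beta)$ is an even function.

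The evenness should come from the fact that the negative jumps of $L$ and $R$ are ``trunk events'' only and do not see the side assignment of individual CLE loops. More precisely, the trunk of an $\SLE_\kappa^\beta(\kappa-6)$ is an $\SLE_{\kappa'}(\rho';\kappa'-6-\rho')$ with $\rho' = \rho'(\beta)$, and by Theorem~\ref{thm:sle_kp_rp} its left and right cut-offs on a quantum half-plane are the beads of the two thin quantum wedges of respective weights $W_L$ and $W_R$ cut out by the trunk. One would then check, using the Bessel excursion description of thin wedges together with the scaling relation between the trunk's quantum length and the corresponding Bessel-time parameterizations of the adjacent wedges (cf.\ Remark~\ref{GQLRemark}), that the intensity of beads per unit of trunk quantum length on each side is a universal function of $\kappa$ alone, independent of how the total ``force'' $\kappa'-6$ is split between the two force points.

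The main obstacle will be this last claim: that the left and right bead intensities per unit trunk quantum length agree despite $W_L \neq W_R$ for $\beta \neq 0$. A natural route is first to verify the equality in the symmetric case $\beta = 0$ (where it is immediate from left-right reflection of the half-plane), and then to propagate it to general $\beta$ --- either by a target-change/analytic continuation argument for the trunk, or by combining the reflection identity $a_{l,-}(\beta) = a_{r,-}(-\beta)$ with the explicit expression for the sum $a_{l,-} + a_{r,-}$ that can be read off from the fragmentation-tree ratio $u = -\cos(\pi\alpha)$ of Theorem~\ref{thm1}.
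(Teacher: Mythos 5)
Your reflection argument establishes the identity $a_{l,-}(\beta)=a_{r,-}(-\beta)$ (and likewise $a_{l,+}(\beta)=a_{r,+}(-\beta)$), which is correct: the anti-conformal involution of the disk preserves the LQG and CLE laws while swapping $L\leftrightarrow R$ and reversing all loop orientations, hence $\beta\mapsto -\beta$. The problem is that this reduction gains nothing: given that identity, the lemma is \emph{exactly equivalent} to the assertion that $\beta\mapsto a_{l,-}(\beta)$ is an even function, so you have simply restated the claim in a different coordinate system. None of the three routes you sketch for the evenness actually closes the gap.

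Route (c) is circular: the formula $u=-\cos(\pi\alpha)$ of Theorem~\ref{thm1} and the expression $u=\tfrac12(u_L+u_R)$ appearing in the proof of Theorems~\ref{mainthm<4}--\ref{thm:beta_rho} are themselves derived \emph{assuming} Lemma~\ref{a-a+}, so you cannot use them as inputs. Route (b) does not indicate how to ``propagate'' from $\beta=0$, and there is no obvious continuity or monotonicity statement that would do it. Route (a) is the only substantive suggestion, but the Bessel/wedge picture gives precisely the opposite impression: since $W_L\neq W_R$ for $\beta\neq 0$, the two adjacent thin wedges are encoded by Bessel processes of \emph{different} dimensions $\delta_L\neq\delta_R$, and what this pins down (via the ladder-height exponent $\alpha P_L$) is only the \emph{ratio} $u_L=a_{l,+}/a_{l,-}$, not the absolute normalization $a_{l,-}$. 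The ``universal intensity of beads per unit trunk quantum length'' you want is essentially the content of the lemma, and the Bessel excursion intensity does not hand it to you. So a genuinely new idea is still needed.

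The paper's proof proceeds by a different symmetry, namely \emph{target invariance} of the $\SLE_\kappa^\beta(\kappa-6)$ exploration, combined with the Radon--Nikodym description of Proposition~\ref{prop:quantum_disk_bl}. On a boundary-length-$4$ quantum disk one starts the exploration at a typical boundary point $x_0$ and considers the two target points $x_1,x_1'$ lying at boundary length $1$ to each side. The two branches of the exploration tree targeting $x_1$ and $x_1'$ coincide (as curves, and as explorations of the LQG surface) until they separate $x_1$ from $x_1'$, but the associated boundary-length processes start from $(L_0,R_0)=(3,1)$ and $(1,3)$ respectively. The event that by trunk-quantum-time $t$ the exploration has hit the unit boundary arc centered at the antipode of $x_0$ is one and the same event for the two branches; in the $(3,1)$ frame it is a negative jump of $L$, in the $(1,3)$ frame it is a negative jump of $R$. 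Comparing the small-$t$ rates, and cancelling the common $h$-transform factor $\Delta_0^{\alpha+1}/\Delta_t^{\alpha+1}$, gives $a_{l,-}=a_{r,-}$ directly. Your proposal does not contain this (or any equivalent) argument, so as written it has a genuine gap.
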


\begin {proof}[Proofs of Theorems~\ref {mainthm<4}, \ref{mainthm<4-22}  and \ref{thm:beta_rho} (assuming Lemma \ref {a-a+})]

Let $\eta$ denote an SLE$_\kappa^\beta (\kappa -6)$ drawn in the upper half-plane. Its trunk $\eta'$ 
is an $\SLE_{\kappa'} ( \rho' ; \kappa' -6- \rho')$ process for some $\rho'$ (apart in the special cases $\beta =-1, 0, 1$, we do not know yet what the value of $\rho'$ is, and one of the goals of the coming paragraphs is actually to derive this value).

When we draw $\eta$ in $\h$, we know that the points at which it intersects $\R$ are exactly the same points at which the trunk $\eta'$ intersects $\R$. If we endow the half-plane with a quantum half-plane structure as before, then we will be able to interpret the quantum lengths intervals on the \jason{real line} in between these hitting points in two different ways -- one in terms of a L\'evy process related to $\eta'$ (and therefore of $\rho'$) and one in terms of a L\'evy process related to $\eta$ (and therefore to $\beta$).   

 Let $L$ (resp.\ $R$) denote the process of left (resp.\ right) boundary length variation (when parameterized by the quantum length of the trunk) of $\eta$.  We proved in Proposition~\ref{prop:stationarity2} that $L$ and $R$ are independent $\alpha$-stable L\'evy processes.  The properties of stable processes have been studied quite extensively (see \cite{bertoin96levy} and the references therein). In particular, one has a complete description of the Poisson point process of jumps of their running infimum. These jumps correspond exactly to the quantum lengths of intervals in between the points at which $\eta'$ hits $\R_-$ and $\R_+$.

By \cite[Chapter VIII, Lemma 1]{bertoin96levy} we know that the L\'evy measure for these jumps (corresponding to the ``ladder height process'') for $L$ is a constant times $x^{-\alpha P_L -1} dx$  where $dx$  denotes the Lebesgue measure on $\R_+$ and $P_L$ denotes the so-called positivity parameter of $L$, that is related to the ratio $u_L=u_L (\beta):= a_{l,+} / a_{l,-}$ between the intensity of positive versus negative jumps of $L$ by the formula 
\[ 
u_L =  \frac {\sin ( \pi \alpha(1- P_L) )}{\sin ( \pi \alpha P_L)}
.\] 
One has of course the same formula relating $P_R$ and $u_R$. 

But our LQG description allows one to get another expression for this exponent. We consider this time only the connected components of the complement of $\eta'$ that intersect $\R_-$ and $\R_+$, respectively. 
By Theorem~\ref{thm:sle_kp_rp}, the collection of these LQG surfaces form quantum wedges $\CW_L$ and $\CW_R$ of respective weights 
\[ W_L = \kappa-2 + {\kappa} \rho' / 4, \quad\text{and}\quad  W_R = \kappa-2+ {\kappa}(\kappa'-6-\rho')/4.\]
Recall from~\eqref{eqn:bessel_dim_wedge_weight} that a quantum wedge of weight $W$ is encoded by a Bessel process of dimension $1+ 2W/ \gamma^2$.  The dimensions of the Bessel processes which encode $\CW_L$, $\CW_R$ are therefore
\begin{align*}
\delta_L  := 3 - \alpha + \frac{\rho'}{2} \quad\text{and}\quad \delta_R := \alpha - \frac{\rho'}{2}.
\end{align*}
Recall that each excursion of the encoding Bessel process corresponds to a bead of the thin quantum wedge. We note that the intensity measure for Poisson point process of the  \jason{maxima of the successive excursions from $0$ of a Bessel process of dimension $\delta$ is given by} a constant times $x^{\delta- 3} dx$, and that the boundary length of the bead is a function of this excursion that scales like this maximum. It therefore follows that the intensity measure of the Poisson point process of successive boundary lengths of the beads is also a constant times $x^{\delta- 3} dx$. By independence of the beads, we similarly get that the lengths of the boundary lengths of the intervals in between the hitting points of $\eta'$ on $\R$ have the same scaling behavior.  \jason{That is, the process of boundary lengths of these intervals has an intensity measure given by a constant times $x^{\delta-3} dx$.}

Identifying $\delta_L - 3$ with $- \alpha P_L -1 $, we  get $ \alpha P_L  = \alpha -  ( \rho' / 2)  - 1$
and 
\[ u_L (\beta) = \frac { \sin (- \pi  \rho'/2  )}{ \sin (\pi(- \alpha +  \rho' /2) )} \]
By symmetry, one obtains the similar expression for $u_R$ replacing $\rho'$ by $\kappa' - 6 - \rho'$. We are now ready to conclude: 

\begin {itemize}
 \item
Let us first deduce the value of $a_{l,+}/ a_{l,-}$ when $\beta =-1$. We know that in this case $\rho' =  \kappa'-6$. Plugging this into the previous formula, we see that in this $\beta = -1$ case
\[ u_L (-1) = -\frac {\sin (2 \pi \alpha)}{\sin (\pi \alpha)}  
= -2 \cos (\pi \alpha). \]  
Similarly, one has the same formula for $a_{r,+}/ a_{r,-}$ in the case $\beta=1$,  
which completes the proof of Theorem~\ref {mainthm<4}. 

\item
For general $\beta \in (-1, 1)$, the previous formulas for $u_L(\beta)$ and $u_R(\beta)$ show readily that  expression for $$\frac {u_L} {u_R} = 
\frac {  \sin ( - \pi \rho' /2 )   } { \sin ( - \pi (\kappa' -6 - \rho') /2   )  } .$$ 
On the other hand, Lemma \ref {a-a+} tells us that  
$$\frac {u_L} {u_R} = \frac { a_{l,+} / a_{l,-} }{a_{r,+} / a_{r, -}  }= \frac {a_{l,+}}{a_{r,+}}.$$ 
But we also know from (\ref {firstrelation}) that $a_{l,+} / a_{r, +} = (1- \beta) / (1+\beta)$. 
Putting the pieces together, one then gets the relation between $\beta$ and $\rho'$ stated in   Theorem~\ref{thm:beta_rho}.

\item
Finally, we can note that for all $\beta \in (-1, 1)$, the ratio $u$ between the intensities of upward versus downward jumps of $L+R$ (for any given $\beta$) is 
$$ u = \frac { a_{l,+} + a_{r,+}}  {a_{l,-} + a_{r, -}} = \frac 1 2 ( u_L + u_R ) $$
and one can check (using the expressions of $u_L$ and $u_R$ in terms of $\rho'$ derived above) that $u$ is indeed always equal to $- \cos (\pi \alpha)$ as one would have expected. Since we also know that $a_{l,-}=a_{r,-}$ and 
$a_{l,+} /a_{r,+} = (1-\beta)/(1+\beta)$, this completes the proof of Theorem~\ref{mainthm<4-22}.
\end {itemize}
\end {proof} 

\ww{
Let us conclude this discussion with the following remarks:
\begin {itemize} 
 \item For each $\beta$, we have derived the {\em relative} intensities of the jumps of $R$ and of the jumps of $L$ -- this does however not provide the actual value of $a_{l,-} (\beta)$, say. 
\item Note that we did also not address here (or anywhere else in the paper) the issue of whether  $a_- := a_{r.-} + a_{l,-}$ and $a_+:= a_{r,+}+ a_{l, +}$ do depend on $\beta$ or not. 
\end {itemize}  
}

\section{CLE explorations of quantum disks} 
\label{subsec:quantum_disk_explore}

We are now going to derive the form of the jump law for the boundary length evolution of an $\SLE_\kappa^\beta (\kappa-6)$ process on an independent quantum disk.   
We will first study the chordal case (i.e., the exploration process starts from one boundary-typical point of the quantum disk and is targeted at another) by combining our results in the 
quantum half-plane with an absolute continuity argument. We will then deduce from this the results for other explorations. 

The absolute continuity ideas in this section will be actually quite similar to those of \cite{gm2017sle6_disk} in the special case $\kappa=6$. 

\subsection {The Radon-Nikodym derivative in the chordal case}

Suppose now that $\CD = (\D,h,-i,i)$ is a doubly marked quantum disk such that the quantum boundary length along the clockwise (resp.\ counterclockwise) segment of $\partial \D$ from $-i$ to $i$ is equal to $L_0$ (resp.\ $R_0$).  Let $\eta$ be an independent $\SLE_\kappa^\beta (\kappa -6)$ in $\D$ from $-i$ to $i$. We consider as before, the parameterization of its trunk by its quantum length. For each $t$, 
we denote by $E_t$ the event that the trunk has not yet reached $i$ before having quantum length $t$. 
On $E_t$, we define the quantum length of the left and right boundaries $(L_t, R_t)$ of the remaining-to-be-discovered domain $\CD_t$ (at the time $\tau_t$ for $\eta$ at which the quantum length of its trunk reaches $t$). 
The process $\Delta_t = L_t + R_t$ corresponds to the quantum boundary length of $\CD_t$, and this process will hit $0$ when $t$ reaches the total length of the trunk. 

The key proposition of this section is the following: 
\begin{proposition}
\label{prop:quantum_disk_bl}
On the event $E_t$, the Radon-Nikodym derivative between $(L_s,R_s)_{s \le t}$ 
and the corresponding stable L\'evy processes started from $(L_0, R_0)$ as described in the setting of an $\SLE_\kappa (\kappa -6)$ exploration of a quantum half-plane is $\Delta_0^{\alpha+1} / \Delta_t^{\alpha +1} $. 
Furthermore, the domain $\CD_t$ is (when conditioned on its boundary length) a quantum disk. 
\end{proposition}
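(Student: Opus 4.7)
The plan is to prove both assertions jointly by a local absolute continuity comparison between the quantum disk exploration and the quantum half-plane exploration, with the half-plane stationarity (Proposition~\ref{prop:stationarity2} and its corollaries) as the main input. Morally, the factor $\Delta_0^{\alpha+1}/\Delta_t^{\alpha+1}$ reflects the weighting $dL/L^{\alpha+1}$ in the disintegration $M = \int P_L\, dL/L^{\alpha+1}$ of the natural infinite measure on quantum disks, and this suggests that $M$ plays the role of a stationary infinite reference measure for the exploration dynamics.

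My first step would be to establish a Markov-type property for the infinite measure $\tilde{M}(d\CD, d\eta) := M(d\CD)\, \SLE_\kappa^\beta(\kappa-6)(d\eta \mid \CD)$: on $E_t$, conditionally on the explored part (the trunk $\eta([0,\tau_t])$, the discovered CLE loops, all quantum surfaces cut out along the way, and the boundary-length processes $(L_s, R_s)_{s \leq t}$), the unexplored region $\CD_t$ is a quantum disk $P_{\Delta_t}$ of boundary length $\Delta_t = L_t + R_t$. This is the $\tilde{M}$-analogue of the half-plane stationarity in Proposition~\ref{prop:stationarity2}. To prove it, I would use that near the trunk tip at time $\tau_t$ the quantum disk is locally absolutely continuous with respect to a quantum half-plane, so that Proposition~\ref{prop:stationarity2} transfers; here Remark~\ref{beadboundaries}, which allows us to condition a bead of a thin wedge on its left and right boundary lengths, provides a clean way to separate the disk's overall boundary-length scale from its residual conditional shape.

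With the Markov property in hand, the R-N derivative at the boundary-length level then follows by comparing two descriptions of the $\tilde{M}$-law of $(L_s, R_s)_{s \leq t}$ on $E_t$. On the one hand, $\tilde{M}$ is the integral of the disk-exploration laws $\mu^D_{L_0, R_0}$ against the $M$-marginal $dL_0\, dR_0/(L_0 + R_0)^{\alpha+2}$ on initial arc lengths (obtained from $dL/L^{\alpha+1}$ combined with the uniform law of the second marked point given the first). On the other hand, the infinitesimal jump rates of $(L_s, R_s)$ under $\tilde{M}$ are determined by the half-plane stationarity (since the disk is locally a half-plane), and the Markov property forces these to reweight the stable L\'evy jump intensities by a factor involving only the current boundary length. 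Identifying this factor as $1/\Delta_t^{\alpha+1}$ via the functional form of the $M$-marginal, and combining the two descriptions, yields the R-N derivative $\Delta_0^{\alpha+1}/\Delta_t^{\alpha+1}$ between $\mu^D_{L_0, R_0}$ and the L\'evy law $\mu^H_{L_0, R_0}$ on $E_t$. The conditional quantum-disk property of $\CD_t$ is then already contained in the Markov property itself.

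The main obstacle is the first step, namely establishing the Markov property for $\tilde{M}$ and verifying that the associated weighting of the L\'evy jump rates indeed matches $1/\Delta^{\alpha+1}$ rather than some other functional of $\Delta$. While the intuition is clear --- near the trunk tip the disk looks like a half-plane --- making the local coupling rigorous at the quantum-surface level requires carefully tracking how the quantum disk's field decomposes into a boundary-length scale and residual fluctuations, and how this decomposition is affected by the SLE exploration. I expect the argument to proceed along lines parallel to \cite{gm2017sle6_disk} (where the special case $\kappa'=6$ is treated), combined with the imaginary geometry couplings from Section~\ref{Imgeom} and the wedge-welding results recalled in Section~\ref{S.Background}.
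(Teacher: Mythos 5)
Your intuition about where the factor $\Delta_0^{\alpha+1}/\Delta_t^{\alpha+1}$ comes from is correct --- it is indeed tied to the $dL/L^{\alpha+1}$ weighting in the infinite measure $M$ on quantum disks --- and you have correctly identified \cite{gm2017sle6_disk} as the relevant precedent. However, your proposed route pushes all the real difficulty into ``step 1'' (the Markov property for $\tilde M$, i.e.\ that the unexplored region is a quantum disk conditionally on its boundary length), and the plan you sketch for that step --- ``near the trunk tip the quantum disk is locally absolutely continuous with respect to a quantum half-plane, so Proposition~\ref{prop:stationarity2} transfers'' --- is not enough to carry it. Local absolute continuity near the tip controls the field in a small neighborhood of $\eta(\tau_t)$, but the conclusion is a statement about the law of the \emph{entire} unexplored region $\CD_t$ and about the jump rates of the boundary-length process, and neither follows from purely local information at the tip without some device to propagate it globally. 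You acknowledge this (``making the local coupling rigorous\ldots requires carefully tracking\ldots''), but the missing device is precisely the heart of the proof, so the proposal has a genuine gap at its central step.

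The paper closes exactly this gap with a concrete conditioning argument rather than an infinite-measure Markov property: it realizes the doubly marked disk inside a quantum half-plane by considering a sibling exploration $\eta_r$ started from a point $x_r$ at boundary length $r_0$, and conditions on the rare event $F_\eps$ that the left boundary-length process of $\eta_r$ makes a jump $\geq\eps$ and that this jump cuts off a region whose boundary length from $\eta_r(\sigma_\eps)$ to $0$ lies in $[l_0, l_0+\eps^\zeta]$. Using the stable L\'evy description of the jump intensity one computes $\p[F_\eps] = c(1+o(1))\,\eps^{\alpha+\zeta}(l_0+r_0)^{-\alpha-1}$, and applying this both at time $0$ and at time $t$ (via the half-plane stationarity of Proposition~\ref{prop:stationarity2}) gives, by Bayes, the ratio $(\wt L_0+\wt R_0)^{\alpha+1}/(\wt L_t+\wt R_t)^{\alpha+1}$ directly. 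Crucially, the same conditioning event simultaneously forces the cut-out region to be a quantum disk (by Proposition~\ref{FOURINDEP}), so the second assertion of the proposition falls out of the same computation rather than needing to be established separately and first. So the two conclusions are obtained together in one stroke, whereas your plan treats the second one as a prerequisite for the first and offers no concrete mechanism for proving it.
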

\begin{proof}
Let us consider a quantum half-plane in some domain $D$ with marked boundary points that we call $0$ and at $\infty$.  Let $x_r$ (resp. $x_l$) be the point on the boundary which is $r_0$ (resp. $l_0$) units of quantum boundary length to the right (resp. left) of the origin. Recall that the domain with marked points $x_r$ and $\infty$ is also a quantum half-plane.

\begin{figure}[ht!]
\includegraphics[scale=.5]{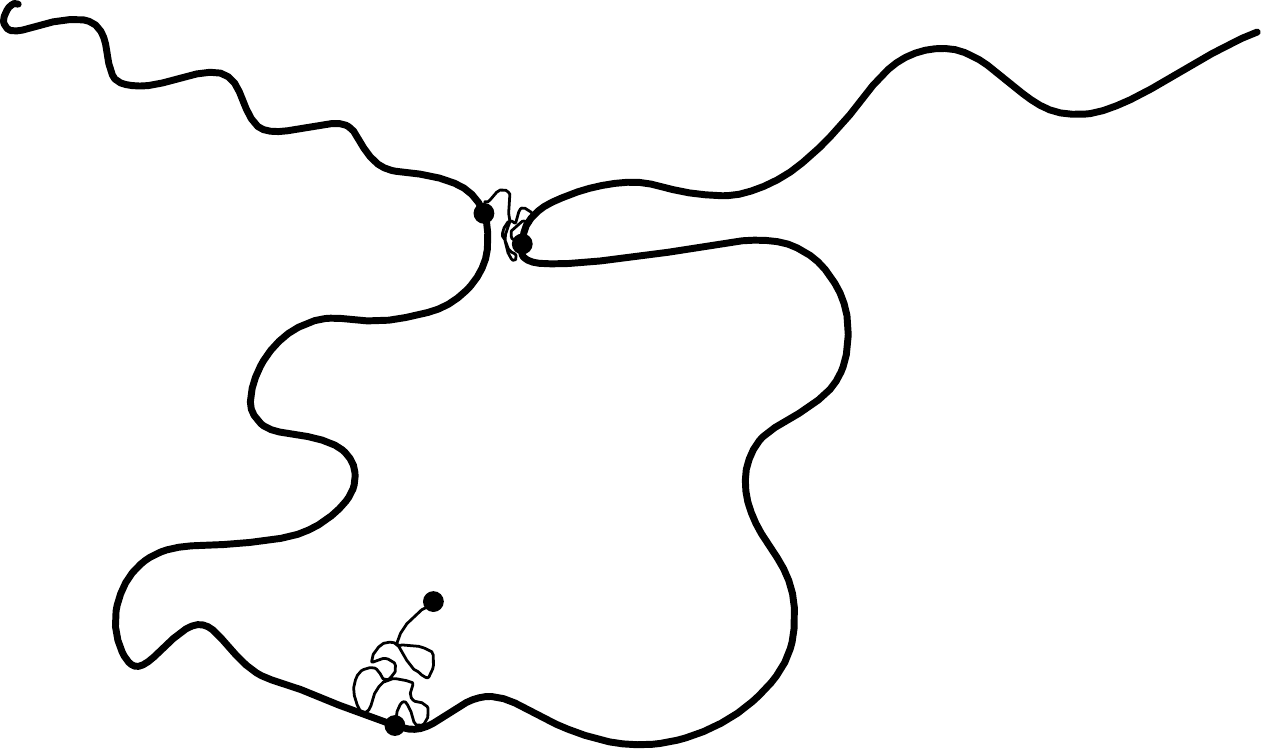} 
	\caption{Symbolic sketch: The two pieces of $\eta$ and $\eta_r$ starting from $0$ and $x_r$ respectively, and the event $F$.}
	\label{abscont}
\end{figure}

We let  $\eta$ and $\eta_r$ be two $\SLE_\kappa^\beta(\kappa -6)$ processes starting from $0$ and $x_r$ respectively, both targeting $\infty$ and that are coupled so that they are part of the same $\SLE_\kappa^\beta (\kappa -6)$ exploration tree.

We are going to prove the proposition by considering an event $F$ where $\eta_r$ almost immediately cuts off a quantum disk of length close to $l_0 + r_0$ with $0$ on its boundary.  Conditionally on this event, $\eta$ will locally evolve like an $\SLE_\kappa^\beta (\kappa-6)$ drawn in this quantum disk. On the other hand, one can also first let $\eta$ evolve for a while (and we know what LQG surfaces it cuts out, since it is an $\SLE_\kappa^\beta (\kappa -6)$ on a quantum half-plane), and then look at how likely it is that $F$ occurs. The sketch in Figure~\ref{abscont} is done in such a way to indicate that the event $F$ in fact very much depends on the quantum half-plane (and less on the behavior of $\eta_r$).  

More specifically, we fix $\epsilon > 0$ and let $\sigma_\epsilon$ be the first time that the left boundary length process associated with $\eta_r$ makes a downward jump of size at least $\epsilon$.  Fix $\zeta \in (0,1/2)$.  We then let $F_\epsilon$ be the event that the quantum boundary length of the arc joining $\eta_r(\sigma_\epsilon)$ and  $0$ is contained  in the interval $[l_0, l_0 +\epsilon^\zeta]$.

Let us now evaluate the probability of $F_\eps$ when $\eps \to 0$: 
We note that the supremum of the absolute value of the left boundary length process of $\eta_r$ up to time $\sigma_\epsilon^-$ is very unlikely to be larger than $\epsilon^{2\zeta}$ when $\eps \to 0$, provided $\zeta > 0$ is chosen sufficiently small. We can for instance choose $\zeta$ so that this probability is bounded by $C \eps^3$.  
Noting that the jump from time $\sigma_\epsilon^-$ to $\sigma_\epsilon$ is taken from the intensity measure for the left boundary length process conditioned on the jump having size at least $\epsilon$, which has density 
 $c \epsilon^\alpha u^{-\alpha-1} \one_{[\epsilon,\infty)}(u)$, $c > 0$ a constant, 
 with respect to Lebesgue measure, we see that
\[ - C\eps^{3}  + \int_{l_0 +r_0+\eps^{2\zeta}}^{l_0+r_0-\eps^{2\zeta}+\eps^{\zeta}} c \eps^\alpha u^{-\alpha-1} du \le \p[ F_\epsilon ]  \leq C\eps^{3}  + \int_{l_0 +r_0 -\eps^{2\zeta}}^{l_0+r_0+\eps^{2\zeta}+\eps^{\zeta}} c \eps^\alpha u^{-\alpha-1} du, \]
 so that 
\begin{equation}
\label{eqn:f_eps}
\p[ F_\epsilon ] = c (1+o(1)) \eps^{\alpha + \zeta} (r_0+l_0)^{-\alpha -1}.
\end{equation}

Let $\CF_t$ be the filtration generated by the quantum surfaces cut off by $\eta$ up to the moment when its trunk has reached quantum time $t$. Recall that the unbounded (i.e., with $\infty$ on its boundary) connected component of the complement of $\eta[0,\tau_t]$ with marked points $\eta_{\tau_t}$ and $\infty$ is a quantum half-plane 
$\CW_t$. 

When $x_r$ and $x_l$ have not yet been swallowed at this time (we call this event $S_t$), we can define 
 $\wt{L}_t$ (resp.\ $\wt{R}_t$) as the boundary length (in $\CW_t$) between $x_r$ and $\eta_{\tau_t}$, and between $x_l$ and $\eta_{\tau_t}$ respectively. Identity \eqref{eqn:f_eps} applied to the evolution of $\eta_r$ in $\CW_t$ shows that
\[ \p[ F_\epsilon \giv \CF_t, S_t]  = c (1+o(1)) \eps^{\alpha + \zeta} (\wt{L}_t + \wt{R}_t)^{-\alpha-1}. \]
Note also that the domain cut out by $\eta_r$ at this time is then also a quantum disk of boundary length close to $\wt{L}_t + \wt{R}_t$ (in the $\eps \to 0$ limit, this will then ensure that the law of $\CD_t$ in the proposition is a quantum disk). 

Hence, if $A$ is an $\CF_t$-measurable event with $\p[A, S_t] > 0$ and $\p[ A, S_t \giv F_\epsilon] > 0$ then, as $\eps \to 0$, 
\[ \frac{\p[ A, S_t \giv F_\epsilon]}{\p[ A, S_t ]}
=  \frac{\p[ A, S_t, F_\epsilon]}{\p [F_\eps] \p[ A, S_t ]} = \frac{\p[ F_\epsilon \giv A, S_t ]}{ \p[ F_\epsilon]} =(1+ o(1)) \times \frac { (\wt{L}_0 + \wt{R}_0)^{\alpha+1}}{(\wt{L}_t + \wt{R}_t)^{\alpha+1}}, \]
from which the proposition easily follows (one can for instance use scaling to compare the evolution in a disk of fixed boundary length $\Delta_0$ with the evolution in a disk a boundary length close to $\Delta_0$).
\end{proof}

\subsection {Proof of Lemma \ref {a-a+}}
\label {Scons}
Proposition \ref {prop:quantum_disk_bl} provides a complete description of the c\`adl\`ag Markov process $(R_t, L_t)$ corresponding to the explorations of quantum disks as an $h$-transform of the pair of 
L\'evy processes corresponding to the exploration of a quantum half-planes.

This allows in particular to describe the jump-rates of  
the process $(L,R)$ in the chordal exploration of a quantum disk. Loosely speaking, by taking into account the Radon-Nikodym derivatives just before and just after a jump (that do favor or penalize jumps according to whether this Radon-Nikodym derivative increases of decreases), we infer that the rate of jumps from $(L,R)$ to $(L - \ell, R)$ (in the quantum disk setting) is now 
\[ \frac {a_{l,-}}{\ell^{\alpha+1}} \times \frac {\Delta^{\alpha+1}}{ (\Delta -\ell)^{\alpha +1}}
\times \one_{ \ell < L} \]
and the corresponding formula for the jumps from $(L,R)$ to $(L, R-\ell)$.

To see that $a_{r,-} = a_{l,-}$, we will use the fact that two $\SLE_\kappa^\beta (\kappa -6)$ processes that are targeting different points can be chosen to coincide up to the first time at which they separate the two target-points from one another.
More specifically, let us consider a quantum disk of boundary length $4$, and pick a boundary-typical point $x_0$. Then, we choose the boundary point $x_1$ (resp. $x_1'$) that lie $1$ boundary length unit to the right (resp. to the left) of $x_0$, and we let $x_2$ denote the antipodal point of $x_0$ (lying at boundary length $2$ in either direction). 
The explorations starting from $x_0$ targeting $x_1$ and $x_1'$ can be chosen to coincide up to the first time at which $x_1$ and $x_1'$ gets discronnected. In terms of the processes $(R,L)$, this means that two processes started from $(3,1)$ and $(1,3)$ respectively  can be made to correspond to the same exploration (until it disconnects the two points at boundary distance $1$ from the starting point from one another). In particular, their jumps would be corresponding to each other up to this time.  We can then evaluate and compare the behavior as $t \to 0$ of the probability that before time $t$, 
the exploration process has hit the boundary interval of boundary length $1$ centered around the antipodal point of $x_2$. This amounts to compare the jump rates from $(1, 3)$ to the set $\{ (1, x), x \in (1/2, 3/2) \}$ and from $(3, 1)$ to the set $\{ (x, 1), x \in (1/2, 3/2) \}$. 
The fact that these two rates are equal implies immediately that $a_{r,-} = a_{l,-}$.

\subsection {Proof of Theorems~\ref{thm1} and~\ref{thm2}} 
\label {S.Explo}

We can now also complete the proofs of Theorems~\ref{thm1} and~\ref{thm2}.  For this, we will 
explore quantum disks by totally asymmetric $\SLE_\kappa (\kappa -6)$,  but we now use different rules to decide in which direction to branch when it separates two domains).  

A natural variant of the chordal exploration of the quantum disk is, at each splitting time at which a quantum surface is actually cut off by the trunk (so the domain with quantum boundary length $\Delta$ splits into two domains of boundary length $\ell$ and $\Delta -\ell$), to choose to branch into 
the domain with largest boundary length. In other words, one chooses to go into the domain with boundary length $\ell$ if and only if $\ell > \Delta/ 2$. We will refer to this as the $(q=\infty)$-exploration mechanism. 

One way to approximate this process is, at each time at which the quantum length of the trunk is a multiple $n \eps$ of $\eps$, to change the target point of the chordal exploration, and to choose it to be the ``antipodal'' point of the exploration point (so that at this time, the boundary lengths of the two boundary arcs from that point to the target point are identical and equal to $\Delta_{n\eps}/ 2$). Then, during the time-interval $[n \eps, (n+1) \eps]$ one uses the chordal exploration with this target point. When $\eps$ tends to $0$, this approximation converges to this exploration variant (indeed, it is going to be the same on any bounded time-interval, provided $\eps$ is small enough). 

Our previous description of the boundary length process in the chordal case can be transcribed in terms of the jumps of the chordal exploration for this variant. First, the positive jumps of the total boundary length process $\Delta_t$ of the currently-explored disk will remain the same as for the chordal exploration, as it is not affected by the branching rule (i.e., the process does not branch at those times). So, when $\Delta_t= \Delta$, it will jump to $\Delta+\ell$ with a rate proportional to 
$$\frac {a_{r,+} + a_{l, +}} {\ell^{\alpha+1}}  \times \frac {\Delta^{\alpha+1}}{(\ell+\Delta)^{\alpha+1}}.$$
For the negative jumps, one gets a rate 
$$  \frac {a_{r,-} + a_{l,-}} {\ell^{\alpha+1}} \times \frac {\Delta^{\alpha+1}}{(\Delta -\ell)^{\alpha +1}}
\times \one_{ \ell < \Delta/2}. $$ 

This completes the proofs of Theorem~\ref{thm1} and Theorem~\ref{thm2} (up to proving Lemma~\ref{qdright} which will be completed in the appendix).

It is worth mentioning here that other exploration mechanisms (i.e., rules on how to branch when the domain is split into two smaller ones) of quantum disks are quite natural to consider. 
In particular, one can choose a fixed $q > \alpha$, 
and, at each splitting time -- when the two ``available'' domains have respective boundary lengths $\ell$ and $\Delta - \ell$, to choose to branch into them with respective probabilities 
$\ell^q / ( \ell^q + (\Delta- \ell)^q)$ and $(\Delta- \ell)^q / 
( \ell^q + (\Delta- \ell)^q)$. The choice of $q$ ensures that during each positive time interval, the probability that this variant does actually coincide with the exploration that always chooses the domain with largest available boundary length is positive, so that it is possible to define this $q$-exploration mechanism as a simple modification of the one that branches into the largest one (which corresponds to $q= \infty$). 
The case where $q=\alpha+1/2$ will actually turn out to be natural in view of Section~\ref{S.measure}.

\subsection {BCLE on LQG discussion} 
Let us rephrase some of the previous results in terms of the boundary conformal loop ensembles introduced in \cite[Section~7]{cle_percolations} (we refer to this paper for a definition of these natural ensembles of loops).  Let us now consider the trunk $\eta'$ of the totally asymmetric $\SLE_\kappa (\kappa -6)$ drawn on an independent quantum disk (but starting from a boundary typical point and targeting another boundary typical point). On the one hand, we know that the connected components of the complement of $\eta'$ that are totally surrounded by $\eta'$ are all quantum disks conditionally on their boundary lengths. If we then consider one of these connected components $\CD$ that is surrounded clockwise, then the loop-trunk decomposition of $\eta$ indicates that in order to trace the rest of $\eta$ in this connected component, one has to trace a $\BCLE_\kappa (3 \kappa /2 -2)$ (or equivalently a $\BCLE_\kappa (- \kappa /2)$ if one orients it in the other way).  But we have just shown that the connected components traced by this BCLE will consist of independent quantum disks given their boundary length. In other words, when one traces an independent $\BCLE_\kappa (3 \kappa /2 - 2)$ on top of an independent quantum disk, then all the connected components will be quantum disks conditionally on their boundary lengths. 

In fact, if we apply the very same argument to the side-swapping $\SLE_\kappa^\beta (\kappa -6)$, we get the following general statement that is interesting on its own right:  
\begin{proposition}[BCLE on quantum disks] 
Suppose that $\kappa \in (8/3, 4)$ and that $\rho$ is in the admissible interval $ (\kappa/2 -4, \kappa/2 -2 )$ so that one can define a $\BCLE_\kappa (\rho)$.  Conditionally on their boundary lengths, the connected components of the complement of a  $\BCLE_\kappa (\rho)$ traced on an independent quantum disk are all quantum disks. 
\end{proposition}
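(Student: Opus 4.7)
The plan is to derive the statement for general $\rho$ by running the argument that the paragraph immediately preceding the proposition gives for the two specific BCLE parameters $3\kappa/2-2$ and $-\kappa/2$, but starting this time from a side-swapping $\SLE_\kappa^\beta(\kappa-6)$ with arbitrary $\beta \in [-1,1]$ rather than from the totally asymmetric case. As $\beta$ varies, the BCLE parameters that appear inside the pockets of the trunk will sweep out the whole admissible interval $(\kappa/2-4,\kappa/2-2)$.

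First, I would extend Proposition~\ref{prop:quantum_disk_bl} to asymmetric chordal explorations of a doubly marked quantum disk by an independent $\SLE_\kappa^\beta(\kappa-6)$. The absolute continuity argument in the proof of Proposition~\ref{prop:quantum_disk_bl} carries over verbatim once Proposition~\ref{prop:stationarity2} replaces Proposition~\ref{prop:stationarity}, yielding (exactly as in Proposition~\ref{FOURINDEP} for the half-plane) that the four ordered families of quantum surfaces cut off by $\eta$ are quantum disks conditionally on their boundary lengths.

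Next, the trunk $\eta'$ is an $\SLE_{\kappa'}(\rho';\kappa'-6-\rho')$ process, where $\rho'$ is associated with $\beta$ via Theorem~\ref{thm:beta_rho}. Applying the same half-plane-to-disk absolute continuity to the trunk alone, using Theorem~\ref{thm:sle_kp_rp} in the half-plane as input, shows that the pockets of $\D \setminus \eta'$ that are entirely surrounded by $\eta'$ are quantum disks conditionally on their boundary lengths. Inside any such pocket $\CD$, the loop-trunk decomposition recalled in Section~\ref{subsec:trunk_construction} shows that the portion of $\eta$ that lies in $\CD$ is a $\BCLE_\kappa(\rho)$, whose parameter $\rho = \rho(\beta,\iota)$ depends on $\beta$ and on the orientation $\iota \in \{\mathrm{cw},\mathrm{ccw}\}$ with which $\CD$ was surrounded by the trunk. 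The connected components of this BCLE coincide with those of $\D \setminus \eta$ contained in $\CD$, which by the first step are quantum disks given their boundary lengths. This establishes the proposition for every $\rho$ that is realized in the form $\rho(\beta,\iota)$.

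The main remaining obstacle is surjectivity: showing that $\{\rho(\beta,\iota) : \beta \in [-1,1],\ \iota \in \{\mathrm{cw},\mathrm{ccw}\}\}$ covers all of $(\kappa/2-4,\kappa/2-2)$. The endpoints $\beta = \pm 1$ recover the two values flagged in the preceding paragraph (with the $3\kappa/2-2$ vs $-\kappa/2$ pair coming from the two orientations), and intermediate $\beta$ interpolate between them. Combining the explicit relation $\beta \leftrightarrow \rho'$ from Theorem~\ref{thm:beta_rho} with the explicit recipe from \cite{cle_percolations} for which BCLE fills a pocket of a given orientation, one expresses $\rho(\beta,\iota)$ as an explicit continuous function of $\beta$ for each fixed $\iota$; checking that the two branches together are monotone and meet the correct endpoint values then gives the full admissible interval. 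This endpoint-matching computation, which ultimately rests on the trigonometric identity underlying Theorem~\ref{thm:beta_rho}, is the only nontrivial piece of the argument.
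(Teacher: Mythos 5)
Your argument is essentially the same as the paper's: the paper disposes of the general case in a single sentence (``if we apply the very same argument to the side-swapping $\SLE_\kappa^\beta(\kappa-6)$, we get the following general statement''), and your proposal simply spells out that sentence, correctly identifying the three ingredients (the asymmetric disk analog of Proposition~\ref{FOURINDEP} via Propositions~\ref{prop:stationarity2} and~\ref{prop:quantum_disk_bl}, the quantum-disk property of the trunk pockets via Theorem~\ref{thm:sle_kp_rp}, and the identification of what is traced inside a pocket as a $\BCLE_\kappa(\rho)$ via the loop-trunk decomposition). You are also right that the only substantive gap left implicit by the paper is surjectivity of $\beta\mapsto\rho(\beta,\iota)$ onto the admissible $\rho$-interval; note, however, that for this one does not really need the quantitative content of Theorem~\ref{thm:beta_rho} -- it suffices to combine the fact, already recalled in Section~\ref{subsec:trunk_construction}, that $\beta\mapsto\rho'(\beta)$ is a continuous bijection from $[-1,1]$ onto $[\kappa'-6,0]$ with the (affine and explicit) dependence of the pocket $\BCLE$ parameter on $\rho'$ from \cite{cle_percolations}, so that monotonicity and endpoint matching are immediate without any trigonometric identity.
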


This result actually also holds for $\kappa \in (2, 8/3]$, and can be derived more directly, but the proof used here (for presentation purposes) only works for $\kappa \in (8/3, 4)$. Actually, see \cite{mswupcoming}, the statement is also valid for the BCLEs with $\kappa'$ instead of $\kappa$.  

\section{The natural quantum measure in the CLE carpet} 
\label {S.measure} 

\subsection {Preamble} 

Theorems~\ref{thm1} and~\ref{thm2} show that it is possible to transcribe questions about CLE on LQG disks in terms of features of some 
explicit multiplicative cascade-type processes that are built on variants of asymmetric L\'evy processes.
These multiplicative cascades have actually been studied in their own right (they are also closely related to branching random walks) 
and a number of results that are actually available in the literature can be rather directly applied to 
construct natural objects related to CLE on LQG surfaces.
This is illustrated in this section with the natural LQG measure that lives in the CLE carpet -- this is a natural object to consider in view of the scaling limit of decorated planar maps or of planar maps with large faces \cite{bck,bbck,ccm2017lengths,dadoun} (as the suitably renormalized number of points in the latter should converge to the natural LQG measure on the CLE carpet). 

We would like to emphasize one point here: As shown in \cite{msw2016notdetermined}, when one explores a CLE using an $\SLE_\kappa(\kappa -6)$ curve for $\kappa \in (8/3, 4)$, one uses some extra randomness that is not present in the $\CLE_\kappa$ (i.e., the $\SLE_\kappa (\kappa -6)$ process is not a deterministic function of the $\CLE_\kappa$). In other words, in the exploration tree of CLE on LQG that gives rise to the branching trees and to the multiplicative cascades, one uses three random inputs: 
(i) The randomness used to define the CLE, (ii) the randomness of the LQG and (iii) the additional randomness of the CPI curve used to draw the trunk of the $\SLE_\kappa (\kappa -6)$. So, when one constructs some random variables using the branching tree, some additional work is needed to check (if it is the case) that they are in fact a deterministic function of (i) and (ii) only. In the present case, the measure is the one that appears in the papers \cite {bbck,dadoun} and the contribution of this section is to show that it is indeed a function of the CLE and LQG measure only.  

This section will be structured as follows. For the readers who are not so acquainted with the results on these multiplicative cascades and as a warm-up to Section~\ref{5.4}, we briefly survey in Section~\ref{5.2} some of the basic general features that we will use here. We recall some results of \cite {bbck} who construct the ``natural area measure'' on the boundary of the branching tree ${\mathcal T}$. Finally, in Section~\ref{5.4}, building also on a result of \cite{dadoun}, we show Theorem~\ref{thmmeasure} which proves that this measure is indeed a function of the CLE and the LQG measure only. 

\subsection {Background and heuristics: Construction of the measure on $\CT$} 
\label {5.2}

Suppose that we are given a law ${\mathcal U}$ on random decreasing families of positive reals $U:= (U_n)_{n \ge 0}$ such that 
$\E [ \sum_{ n \ge 0} U_n ] = 1$. This is viewed as the law of the collection of smaller fragments that an object of size 1 gets fragmented into (an object of size $x$ would be fragmented into a collection with  sizes distributed like $(xU_n)_{n \ge 0}$). 
We suppose that we are given an i.i.d.\ copies of $U$ that we denote by $U^{n_1, \ldots, n_j}$ that are indexed by 
finite sequences of non-negative integers.
The multiplicative cascade $(M_j)_{j \ge 0}$ is then defined by $M_0 = 1$ and  
$$ 
M_j := \sum_{n_1, \ldots , n_j} U_{n_1} U^{n_1}_{n_2} \ldots U^{n_1, \ldots, n_{j-1}}_{n_j}.
$$
These cascades are clearly positive martingales. A sufficient condition for it to converge in $L^1$ to a positive limit $M_\infty$ is that 
there exists $r>1$ such that 
$\E[ \sum_n (U_n)^r ]$ and $\E[ ( \sum_n U_n)^r ]$ are both finite
(these types of results were first established in the setting of branching random walks, see \cite{Biggins,BK,Liu}).
When such a convergence in $L^1$ holds, it is possible to also obtain convergence along ``stopping lines'' in the branching tree (using generalizations of the optional stopping theorem, so that the value of $M$ on this stopping line is the conditional expectation of $M_\infty$ given the information discovered before that line, see e.g., \cite {BK}). 

Such multiplicative cascades arise naturally in the settings of the tree structures ${\mathcal T}$ and $\wt {\mathcal T}$. Let us quickly explain how to quickly identify such a martingale in the latter case, using the fact that the expected value of the quantum area ${\mathcal A}$ of a quantum disk of boundary length $1$ is finite (we will briefly recall how to derive this fact in Remark~\ref{finiteexpectation}). One can draw on this quantum disk an independent nested $\CLE_\kappa$. Exploring such a nested $\CLE_\kappa$ provides (when the CPI branches are parameterized by the quantum length) exactly the branching structure ${\mathcal T}$ (this follows from our previous results, since in each of the discovered $\CLE_\kappa$ loops, the process continues its exploration). 
If one explores the nested $\CLE_\kappa$ up to some ``finite depth'' (meaning for instance that one has not explored inside any of the $k$th level $\CLE_\kappa$ loops for some finite $k$), then simply because the $\CLE_\kappa$ carpets have zero Lebesgue measure, it follows that ${\mathcal A}$ is equal to the sum of the areas of the remaining-to-be-explored disks. If we order their boundary lengths in decreasing order by $L_0, L_1, \ldots$, we immediately get (using the scaling rule for the area) that 
$ {\mathcal A} = \sum_{n \ge 0} L_n^2 {\mathcal A}_n$,
where $({\mathcal A}_n)_{n \ge 0}$ is a collection of i.i.d.\ copies of ${\mathcal A}$. In particular, if ${\mathcal F}$ denotes the information collected before this partial exploration, one gets 
$ \E[ {\mathcal A}  | {\mathcal F} ]$ is equal to  $\sum_{n \ge 0} L_n^2$.  
So, we see readily that when one chooses $(U_n)_{n \ge 0}$ to have the law of $(L_n^2)_{n \ge 0}$, then one is in the previous setting. The fact that $M_1 = \E[ {\mathcal A} | {\mathcal F} ]$ readily implies that the martingale is uniformly integrable and converges in $L^1$ and almost surely to ${\mathcal A}$ (so in this particular case, the convergence result can be derived directly, without referring to the general results on these cascades).

Let us now turn our attention to the tree ${\mathcal T}$. Suppose that we  only explore the particular branch of the labeled tree with the property that when a splitting occurs, one explores the largest of the two available descendants (but along the way, one keep tracks of the sizes of the unexplored descendants). We choose to stop 
exploring this branch at the first time at which the size exits the interval $(1/2, 2)$ 
(this choice turns out to be convenient for our purposes here). In the CLE exploration wording, this means that we 
are using the aforementioned $(q=\infty)$-exploration (corresponding to the CLE exploration of an LQG disk that always branches into the disk with the largest boundary length among the two available choices) -- and we stop it when the boundary length of the currently explored disk exits $(1/2, 2)$.

At that time $T$, we order the sizes of the tips of the unexplored branches of ${\mathcal T}$ (i.e., the boundary lengths of  to-be-explored quantum disks in the carpet) in decreasing order $l_0, l_1, \ldots$. Note that by construction, all the sizes of offsprings that were left aside in the exploration are smaller than $1$. Hence, all $l_j$'s with the possible exception of $l_0$ (if the size at $T$ is $2$ or larger) are smaller than $1$.

We can also interpret the same exploration as an exploration of the larger tree-structure $\wt{\mathcal T}$. In other words,  we can now keep track of the positive jumps corresponding to the discovered CLE loops as well as the $l_n$'s. In this setting, at time $T$, one then has a larger collection of to-be-explored disks with boundary lengths $L_0, L_1, \ldots$, where $(l_n)$ forms a subsequence of $(L_n)$. Note that by construction, all of the $L_n$'s are smaller than $1$, with the possible exception of $L_0$ and $L_1$. The same argument as above shows that $\E[ \sum_{n \ge 0} L_n^2 ] = 1$. 

It is easy to get some crude information on the joint law of $(l_n)_{n \ge 0}$: 

\begin{enumerate} 
\item The law of $T$ has an exponential tail (just because of the lower bound on positive jumps of size greater than $2$ before $T$).
\item The largest boundary length 
$l_0$ has a polynomial tail distribution of the type $c x^{-2 \alpha -1 } \le \p [ l_0 \ge x ] \le C x^{-2\alpha - 1} $ 
as $x \to \infty$ (this just comes from the fact that the intensity measure of positive jumps is bounded from above and from below by a constant  times  $dl / l^{2\alpha+2}$ as $l \to \infty$) for some absolute constants $c$ and $C$. 
\item One has also good information about the numbers of small positive and small negative jumps before $T$. For instance, by trivially bounding the intensity rates of negative jumps of size $l$ before time $T$ by a constant times 
$1/l^{\alpha+1}$, one sees readily that conditionally on $T$, the expected number of $l_n$'s that lie in $[2^{-k}, 2^{-k+1}]$ is bounded by $C 2^{\alpha k} T$ for some constant $C$. 
\end {enumerate} 
Such estimates can then be easily combined to check that  $\E[ \sum_n l_n^{\beta } ] < \infty$ for all $\beta \in (\alpha, 2\alpha +1)$. Furthermore, this quantity will tend to $\infty$ as $\beta \to \alpha$ (using the lower bound on the tail of $l_0$), and we know on the other hand that 
$ \E[ \sum_n l_n^2 ]  < \E[ \sum_n L_n^2 ] = 1$. 
It follows that there exists a value $\delta \in (\alpha, 2)$ such that 
$ \E[ \sum_n l_n^\delta ] = 1$.
We will see in a moment that $\delta$ is in fact equal to $\alpha+1/2$ but this will not be needed at this point. 
So, if we choose $(U_n := l_n^{\delta})_{n \ge 1}$, then we are in the setup described above. One can check (using the 
previous type of estimates on the joint law of $l_n$'s) that the $L^1$-convergence criteria are fulfilled. 
In particular, one can then deduce that if $M_\infty$ denotes the limit of the martingale $M_n$, and if one stops the exploration of the 
tree at any ``finite'' stopping line, the sum of the $\delta$-th powers of the boundary length of the remaining-to-be-explored disks is equal to the conditional expectation of $M_\infty$. All this corresponds exactly to the convergence of the {\em Malthusian} martingales in the branching structure ${\mathcal T}$ stated in \cite{bbck}, see also \cite[Section~3.1]{dadoun}.

The quantity $Y:= M_\infty$ is called the {\em intrinsic area} of the growth-fragmentation tree ${\mathcal T}$ in \cite {bbck,bck2}. This terminology comes from the fact that this tree arises as scaling limit of peeling processes on random planar maps with large faces, and that this {\em intrinsic area} then corresponds to the scaling limit of the renormalized number of faces of these planar maps. 

In our continuum setting where ${\mathcal T}$ is directly related to a CLE on LQG, we will want to interpret $Y$ as the ``total natural quantum mass'' of the CLE carpet. There is however a catch: 
This construction of $Y$ goes via ${\mathcal T}$ and therefore uses also the randomness that is given by the exploration tree. It is not clear at this stage that it is a deterministic function of the LQG measure and the CLE only (i.e., that it does not depend on the additional randomness of the CLE exploration).

\begin{remark} 
In the growth-fragmentation tree set-up, the very same construction allows to define the 
quantum area of any subtree of ${\mathcal T}$ consisting of the descendants of one point in the tree, which in turn allows to define an actual proper {\em area measure} on the boundary of the growth-fragmentation tree (that corresponds to the scaling limit of the counting measures on the associated planar maps).  

Following the ideas of \cite{lpp} (this is also mentioned and used in \cite{dadoun}), it is then also possible to describe the time-reversal of the exploration process that is targeting a point chosen according to this  measure on the boundary of the tree, in terms of another L\'evy process. This type of feature can be useful in the context of planar maps, as it would correspond to exploring the map towards a uniformly chosen point.  
Note also that further features, such as the conditional law of ${\mathcal T}$ given $Y$, or properties of the law of $Y$ are derived in the recent paper \cite {bck2}.
\end{remark}

\begin {remark}[Comment on the value of the exponent]
The fact that $\delta = \alpha + 1/2$ is actually stated in \cite[Proposition 5.2]{bbck}. Indeed, the particular, the jump measure described in the displayed equation just before (28) in \cite{bbck} is precisely the one that we are investigating here, so that we are looking for the value of $q \in (\alpha, 2)$ for which $\kappa_\alpha (q) =0$ in the last formula of \cite[Proposition 5.2]{bbck}, which is obviously $q=\alpha + 1/2$ (due to the $\cos (\pi (q-\alpha))$ term).

Note that \cite[Proposition 5.2]{bbck} and its proof give a more general result than the particular case that we need here. Proving that $\delta = \alpha +1/2$ directly is actually not easy to perform without the help of a software package like Mathematica for guidance about the right change of variables. One has  to check that the contribution of the positive jumps, of the negative jumps and of the L\'evy compensation for this process exactly cancel out. In particular, one has to evaluate the asymptotics as $\eps \to 0$ 
\[ \int_\eps^\infty \frac {- \cos (4 \pi/\kappa) dl}{ l^{\alpha+1} (1+ l)^{\alpha +1} }( (1+l)^{\alpha+1/2} - 1) 
- \int_\eps^{1/2} \frac {dl}{l^{\alpha+1} (1-l)^{\alpha +1}} (l^{\alpha+1/2} + (1-l)^{\alpha +1/2} - 1 ).\] 

Actually, an indication of the fact that $\delta = \alpha + 1/2$ (that could be turned into a lengthy and very convoluted proof compared to the previous direct computation) is to notice that the Hausdorff dimension of the $\CLE_\kappa$ carpet (in the Euclidean metric) for $\kappa \in (8/3, 4]$ is known to be almost surely equal to 
$ D =  2 - (8-\kappa)(3\kappa-8)/(32\kappa)$ 
(see \cite{ssw2009radii} for the upper bound and the result for the ``expectation dimension'' which is the relevant one to apply the KPZ ideas -- see \cite{nw2011carpet} for the lower bound). 
On the other hand, this CLE carpet is independent of the LQG measure, so that the KPZ ideas from \cite{DS08} should be applicable here. And, when one applies the KPZ formula, one indeed gets the formula $\delta = \alpha + 1/2$.  
\end {remark}

\ww{
\begin {remark}[Finite expectation] 
\label {finiteexpectation}
For sake of completeness, let us comment on the fact (that will also be used in the next section) that the expected quantum area of a disk of boundary length $1$ is finite i.e., that $\E [ {\mathcal A} ] < \infty$ in the notation used above.

There are several rather direct ways to explain this based on the current literature. One option is to use the L\'evy tree results of the present paper. Indeed, when one explores all branches of the L\'evy tree of the CLE exploration, up to the first time (along each of the branches) at which one discovers a $\CLE_\kappa$ loop (this corresponds to stopping at each first positive jump), one gets a collection of boundary lengths $(u_n)_{n \ge 1}$ of to be explored loops. The sum of the quantum areas of these disks is almost surely equal to the total area of the initial disk. In other words, using the scaling rule
$$ \CA = \sum_{n \ge 1} u_n^2 \CA_n,$$ 
where $(\CA_n)_{n \ge 1}$ is a collection of i.i.d. copies of ${\mathcal A}$, that are also independent of $(u_n)_{n \ge 1}$.

If we iterate this CLE exploration procedure $k$ times (i.e. we look at the nested $\CLE_\kappa$ loops at depth $k$), we get a similar expression 
\begin {equation} 
 \label {iteration} 
\CA = \sum_{n \ge 1} u_{k,n}^2 \CA_{k,n}.
\end {equation}
On the other hand, $Z_k:= \sum_{n \ge 1} u_{k,n}^2$ is exactly the martingale that appears in the multiplicative cascades associated to the branching tree, as studied for instance in \cite {ccm2017lengths}. It is known to converge almost surely and in $L^1$ to a finite limit $Z_\infty$. Furthermore it is easy to see (either directly from the property of the LQG measure, or because of the convergence of $Z_k$) that $\max_{n} u_{k, n} \to 0$ almost surely as $k \to \infty$. 

We can now see that if $\E [ \CA ] = \infty$, then the right-hand side of (\ref {iteration}) would converge in probability to infinity i.e., that there is no way in which the sum on the right-hand side could converge in distribution to a finite positive random variable. Hence, we conclude that $\E[ \CA ] < \infty$, and furthermore (by looking at the limit in probability of the right-hand side of \eqref{iteration}) 
that ${\mathcal A} := Z_{\infty} \E [ {\mathcal A} ]$. 

We can notice at this point that in fact, the following stronger fact holds: 
\begin {equation} 
\label {integrab}
\E[\CA^p] < \infty \quad \text{for all}\quad p \in  (0,4 / \gamma^2). 
\end {equation}
This can be viewed as a direct consequence of the formula for the density of $\CA$ worked out in \cite{AG} (that therefore also provides a proof of the fact that $\E [ \CA ]$ is finite).

Note that this threshold $4  /  \gamma^2$ is the one that already appears for the expectation of the powers of the area measure of a bounded domain when one starts with a GFF with Dirichlet boundary conditions. Things are a little bit less straightforward here, as the boundary conditions are ``of Neumann type'', but loosely speaking, the boundary length constraints (since one now looks at a quantum disk with finite boundary length) ensure that the tail distribution of the quantum area gives rise to the same threshold. 
\end {remark} }

\subsection {Uniqueness of the LQG carpet mass} 
\label {5.4}

Our goal in this section is to show that the random variable $Y$ that is described in the previous section is equal to the limit in probability of some constant times $\eps^{\alpha + 1/2}$ times the number $N_{[\eps, 2\eps]}$ of outermost CLE loops with quantum boundary lengths in $[\eps, 2 \eps]$. As such a description of $Y$ only depends on the CLE and on the LQG measure, proving this statement would indeed show that $Y$ is independent of the additional randomness that comes from the CPI  tree.

Let us first emphasize that this can again be viewed a statement about the labeled branching tree ${\mathcal T}$ only. Indeed 
$N_{[\eps, 2\eps]}$ is just the total number of ``positive jumps'' of size in $[\eps, 2\eps]$ in the tree. So, in a way, one needs to transfer the previous construction of $Y$ (that was given more in terms of number of small negative jumps) into a description in terms of small positive jumps. While certainly not surprising to any specialist of these trees, this statement does not seem to be written up in the existing literature, so we provide a proof here. Before that, let us provide a brief heuristic description of what is going on.  

The random variable $Y$ describes the number of small LQG disks \jason{into} which the CLE exploration divides the CLE carpet (let us stress again that we are dealing with ${\mathcal T}$ here, and do not care about the domains encircled by $\CLE$-loops). It is for instance actually shown in \cite[Theorem 3.4]{dadoun} that if we stop the exploration mechanism in each branch of the tree ${\mathcal T}$ as soon as the label becomes smaller than $\eps$ (we will refer to  this as the ``stopping line'' ${\mathcal L}_\eps$ for this tree), then the empirical measure $\mu_\eps$ of the sizes of the collection of obtained labels behaves like 
$\eps^{-\alpha-1/2} \times Y \times \nu (\eps \cdot)$ 
as $\eps \to 0$, where $\nu$ is some finite measure on $[0,1]$. In particular, $\eps^{\alpha+1/2}$ times the number of disks-sizes/labels between $\eps/2$ and $\eps$ in this ``stopping line'' will converge in $L^1$ to some constant times $Y$.  
This suggests that the number of CLE loops with boundary length of the order of $\eps$ should also explode like some constant times $Y \eps^{-\alpha - 1/2 }$. Indeed, a typical small CLE loop will be discovered by the exploration within a disk of boundary length of the same order of magnitude as the boundary length of the CLE loop.

There are a number of possible concrete ways to turn the previous heuristics into a proof, building on the available results in the literature. Here is one outline, that builds on the aforementioned fact: 

\begin {prop}[Special case of Theorem 3.4 in \cite {dadoun}]
\label {propdadoun}
Let us denote the boundary lengths appearing at the stopping line ${\mathcal L}_y$ in decreasing order by $(Y_n^y, n \ge 1)$. There exists a deterministic measure $\nu$ on $[0,1]$, such that when $F$ is a measurable non-negative function on $[0,1]$, 
$$ y^{\alpha + 1/2} \sum_{n  \ge 1} F ( Y_n^y / y) \to Y \times \int F(u) d\nu (u)$$ 
in $L^1$ as $y \to 0$.
\end {prop}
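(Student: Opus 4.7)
The plan is to reduce the statement to a renewal-theoretic computation for the Lévy process obtained from the branch process $Y$ via the Lamperti transform, combined with a Nerman/Kesten--Stigum theorem for the associated general branching process indexed by the stopping line $\mathcal{L}_y$.

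Since $Y$ is positive and self-similar with scaling index $\alpha$, one first writes $Y_t = \exp(\xi_{\rho(t)})$ for a real-valued killed Lévy process $\xi$, where $\rho$ is the standard Lamperti time change; the Lévy measure of $\xi$ is read off from the jump rates of $Y$ recalled in Section~\ref{5.2}, and the killing encodes the offspring cast off at each negative jump of size $l < y/2$. Under this transform, the stopping line $\mathcal{L}_y$ corresponds to the first-passage time of $\xi$ below $\log y$, and the renormalized label $Y_n^y/y$ is the exponential of the (negative) overshoot. The Malthusian exponent is $\delta = \alpha+1/2$: the identity $\psi_\xi(\delta)=0$ for the Laplace exponent $\psi_\xi$ of $\xi$ is precisely the cancellation recalled in the remark after \cite[Prop.~5.2]{bbck}. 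Together with the polynomial tail estimates of Section~\ref{5.2} on $l_0$ and on dyadic counts, this gives, via Biggins--Kyprianou $L^r$-boundedness criteria, that the additive martingale $W_y := \sum_{n\ge 1} (Y_n^y)^\delta$ converges in $L^1$ to a limit $W_\infty$ which, by the discussion of Section~\ref{5.2}, is a positive constant multiple of $Y$.

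The key reformulation is the identity
\[
y^{\alpha+1/2} \sum_n F(Y_n^y/y) \;=\; \sum_n (Y_n^y)^\delta \cdot G(Y_n^y/y), \qquad G(u) := u^{-\delta} F(u),
\]
which displays the left-hand side as the martingale $W_y$ weighted by the $G$-values of the overshoots at level $y$. Performing the size-biased change of measure with Radon--Nikodym derivative proportional to $W_\infty$, a distinguished spine is produced along which (standard spine arguments) the Lamperti process $\xi$ becomes its Esscher transform by $\delta$ and in particular has positive drift. The classical renewal theorem applied to this tilted process yields a deterministic stationary overshoot distribution $\tilde\nu$; transporting it through $u = \exp(-\mathrm{overshoot})$ and recombining with the $L^1$ convergence of $W_y$ produces a deterministic measure $\nu$ on $[0,1]$ such that $y^{\alpha+1/2}\sum_n F(Y_n^y/y) \to Y \cdot \int F\, d\nu$ in $L^1$, initially for bounded continuous $F \ge 0$; extension to general measurable $F \ge 0$ is then by monotone class plus the uniform integrability inherited from the $L^r$ bound on $W_y$.

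The principal obstacle is the combined third step: one must (a) verify the renewal theorem's hypotheses for the Esscher-tilted $\xi$, namely finiteness and positivity of $\psi_\xi'(\delta)$ and the non-lattice property, both of which are immediate from the absolute continuity of the positive-jump intensity $dl/l^{\alpha+1}$ displayed in Section~\ref{5.2}; and (b) lift the single-spine renewal statement to a uniform statement over the entire stopping line $\mathcal{L}_y$, which is standard Crump--Mode--Jagers/Nerman material but requires careful bookkeeping because the line is indexed by the continuous parameter $y \downarrow 0$ rather than by discrete generations, and because we want $L^1$ convergence simultaneously for all measurable characteristics $F$.
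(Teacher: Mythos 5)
The paper does not prove this proposition: as the bracketed title indicates, it is imported wholesale as a ``Special case of Theorem 3.4 in \cite{dadoun}'', and the authors explicitly frame the surrounding discussion as one ``built on available results in the literature''. There is therefore no in-paper proof to compare your sketch against; the honest answer to the exercise is to cite \cite{dadoun} and move on.

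That said, your outline does recover the standard machinery behind such a result (Lamperti transform, Malthusian martingale $W_y=\sum_n (Y_n^y)^\delta$ along stopping lines, size-biased spine, renewal theorem for the $\delta$-Esscher-tilted L\'evy process), and the algebraic pivot $y^{\alpha+1/2}\sum_n F(Y_n^y/y)=\sum_n (Y_n^y)^\delta G(Y_n^y/y)$ with $G(u)=u^{-\delta}F(u)$ is exactly the right reformulation. Two points to tighten if you actually wanted this to stand as a proof. First, your phrase ``the killing encodes the offspring cast off at each negative jump'' misdescribes the Lamperti picture: in a growth-fragmentation the spine's L\'evy process $\xi$ is not killed at a negative jump --- the spine simply loses a fragment of size $l$ and continues; killing (if present) would only arise from the process reaching $0$. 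What a negative jump encodes is a birth event for the underlying Crump--Mode--Jagers structure, which is a reproduction kernel, not a killing rate, and the distinction matters when you set up the spine change of measure. Second, the extension to all measurable non-negative $F$ is glossed: $L^1$-convergence for bounded continuous $F$, plus $L^r$-boundedness of $W_y$ for some $r>1$, gives you $F$ bounded measurable by a monotone-class argument, but unbounded $F$ requires either a truncation with explicit control near $u=0$ or an appeal to directly-Riemann-integrable test functions as in the renewal literature; a one-line ``monotone class plus uniform integrability'' does not cover the case $\int F\,d\nu=\infty$ cleanly.
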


Then: 
\begin {enumerate} 
\item Let us first make some simple a priori estimates. Note that the expectation of the number $N_{[x, \infty)}$ of upwards jumps of size greater than $x$ in the 
tree ${\mathcal T}$ (when started from a quantum disk of boundary length equal to $1$) is finite; it is actually trivially bounded by $1/x^2$ as the expected quantum area $C$ of the initial disk (here we mean the quantum area of the entire disk) is bounded from below by $\E[ N_{[x, \infty)} C x^2]$ (just sum the area of all the disks corresponding to the $\CLE_\kappa$ loops of boundary length greater than $x$).  

Next we can note that as $x \to 0$, the quantity $N_{[x,2x]} x^{\alpha +1/2}$ is bounded from below by some positive (random) number. We can for instance use the aforementioned result about the number of disks of boundary length in $[\eps/2,\eps]$ on the stopping line ${\mathcal L}_\eps$, and note that for each of these disks, the probability to then make a positive jump of size in $[x, 3x/2]$ (for any choice of $x \in [\eps/2, 2\eps]$) is bounded from below.  
 
 \item When $y \ge 4x$, let $N_{[x, 2x]}^y$ (resp. $D_{[x, 2x]}^y$) denote the total number of positive (resp.\ negative) jumps of size in $[x, 2x]$ that occur before the stopping line ${\mathcal L}_y$. 
 By comparing the jump rates of positive and negative jumps (noting that the latter is always bounded by the former for a given small-enough jump-size),
 $$ \E[ N_{[y/2, y]}^{2y}] \le C \E[ D_{[y/2, y]}^{2y} ]$$
 for some constant $C$. 
 But by construction, we note that 
 $$ \E [ D_{[y/2, y]}^{2y} ] \le \E [  \sum_{n \ge 1 } \one_{Y_n^{2y} \in [y/2, y]} ]  \le C' y^{-\alpha - 1/2}$$ 
 for some constant $C'$ (this follows for instance from Proposition~\ref{propdadoun}). 
 By comparing the jump rates of positive jumps of size in $[Mx, 2Mx]$ and of positive size in $[x, 2x]$ at times at which the labels are greater than $4Mx$, we then see that for some constant $C''$ that is independent of $M$ and $x$, 
 $$ \E [ N_{[x,2x]}^{4Mx} ] \le M^{\alpha} \E[ N_{[Mx,2Mx]}^{4Mx} ]  \le C'' M^{-1/2} x^{-\alpha - 1/2}.$$ 
 In particular, using Markov's inequality, we see that for any given $\delta$, if we choose $M$ very large, we can ensure that 
 $$ \p [ N_{[x, 2x]}^{4Mx} x^{\alpha + 1/2 } \ge \delta ] \le \delta $$ 
for all small $x$.  
\item 
We now fix a very large  $M$ and will study the parts of the tree {\em after} the stopping line ${\mathcal L}_{y}$ for $y =4Mx$ (we will use this value of $y$ from now on).  We denote by $Q_{[x, 2x]}^{y} = N_{[x, 2x]} - N_{[x, 2x]}^y$ the number of positive jumps of size in $[x, 2x]$ after that line. Recall that at this stopping line ${\mathcal L}_{y}$, one has a collection of disks of boundary lengths $Y_1^y, Y_2^y, \ldots$. For each $n$, we  denote by $Z_n^y$ the number of (outermost) CLE loops with quantum boundary length in $[x, 2x]$ that respectively lie in the disk of boundary length $Y_n^y$, so that 
$Q_{[x,2x]}^y = \sum_{n \ge 1} Z_n^y$.  Our goal is to show that the quantity $x^{\alpha + 1/2} Q_{[x, 2x]}^y$ converges in probability to some constant times $Y$.
For each $n$, we let $\overline Z_n^y$ denote the conditional expectation of $Z_n^y$ given $Y_n^y$. 

Let $F_0(z)$  denote the expected number of
(outermost) CLE loops of quantum boundary length in $[z/(4M), 2z/(4M)]$ in a CLE drawn on a quantum disk of boundary length $1$. We can first apply Proposition~\ref{propdadoun} to $F(z)=F_0(z) \one_{z \le \eps /(4M)}$, and see that it implies that when $\eps$ is chosen to be very small,
$$ x^{\alpha + 1/2} \sum_{n \ge 1} \overline Z_n^{y} \one_{Y_n^{y} \le \eps x }$$ converges in $L^1$ to some random variable that has a very small expectation. 
In particular, we deduce readily that for all given $\delta$, when $\eps$ is chosen small enough,
$$ \p[ x^{\alpha + 1/2} \sum_{n \ge 1} Z_n^{y} \one_{Y_n^{y} \le \eps x } \ge \delta ] \le \delta$$ 
for all $x$. 

\item 
It  remains to show that for fixed large $M$ and a fixed small $\eps$,  
$$x^{\alpha + 1/2} \sum_{n \ge 1} Z_n^y \one_{Y_n^{y} > \eps x} $$ 
converges in probability to some constant times $Y$. 
Here, we can first apply the same argument as above to the function 
$ F(z)= F_0 (z) \one_{ z \ge \eps / (4M)}$ to see that 
$$ x^{\alpha + 1/2} \sum_{n \ge 1} \overline Z_n^{y} \one_{Y_n^{y} > \eps x} $$ 
converges in probability to a constant times $Y$. 
Hence, it remains to argue that 
$$ x^{\alpha + 1/2} \sum_{n \ge 1} (Z_n^{y}- \overline Z_n^{y}) \one_{Y_n^{y} > \eps x } $$ 
converges to $0$ is probability. 
This is now essentially a variation of the law of large numbers: When $x$ is small and one conditions on the values $Y_n^y, \ldots$, this is bounded by a constant times the mean of a large number $m_x$ (that is greater than some positive number times $x^{-\alpha - 1/2}$ with high probability) of  independent random variables of mean $0$, and one has good uniform integrability control on these variables because of the integrability of the quantum area ${\mathcal A}$ of a quantum disk with boundary length $1$. More specifically, let $c$ denote the probability that a quantum disk with boundary length $1$ has quantum area at least $1$, and let $c'$ denote the infimum over all 
positive $m$ that the sum of $m$ independent Bernoulli random variables, that are each equal to $1$ with probability $c$, is greater than $mc/2$. Then, by bounding the sum of the quantum areas of the $Z_n^y$ to be explored disks by the area of the disk with boundary length $Y_n^y$, and using the scaling properties, we immediately see that for all $k$, and all $n$ with $Y_n^y > \eps x$,
\[ c'  \p [ Z_n^y > k ] \le \p [ {\mathcal A} > (k/2 ) \times (\eps/2M)^2 ],\]
which is sufficient to conclude.
\end {enumerate}

\subsection {From the total mass to the measure} 

We have now shown how to construct and describe the ``total LQG mass'' $Y$ of a CLE carpet drawn on a quantum disk. 
To conclude the proof of Theorem \ref{thmmeasure}, it now remains to explain how to construct the actual measure on the CLE carpet, and to check that it does not depend on the additional randomness that comes from the CPI structure within the CLE carpet. 

One can first recall that in the growth-fragmentation tree ${\mathcal T}$ setup, the martingale ideas do actually construct directly a measure on the boundary of the  tree -- which is referred to as its  {\em  intrinsic area measure} in \cite {bbck}. 
This cannot be used directly here when one wants to define the measure in our CLE on LQG setting, because the embedding of the boundary of the tree into the CLE carpet is rather complicated (so that describing the measure here simply as the pushforward of the measure on the boundary of the tree via the embedding would require some justification), and the tree construction (and therefore the embedding too) does depend on the additional CPI randomness.  

Let us now outline one possible self-contained simple way to proceed: First, we note that it suffices to construct the measure for any concrete embedding of the quantum disk in some planar domain (the measure for another embedding will then be the pushforward via the corresponding conformal map). Let us for instance choose an embedding where three randomly chosen points according to the LQG boundary length are sent to three points chosen independently according to the Lebesgue on the unit circle (with the same circular orientation).

We now proceed iteratively as follows: At the first step, in the CLE duality picture, we draw the boundary touching CLE$_{\kappa'}$ loops alongside all the CLE$_\kappa$ loops that one encountered (independently of the LQG structure). This amounts to drawing a CPI branching tree starting from one boundary point and targeting all other boundary points -- so one discovers (in a Markovian way) part of the exploration tree structure.  
If one discards the discovered CLE$_{\kappa}$ loops (and only keeps the information of the boundary length of the remaining domains, but not of their embedding), the to-be-explored domains form a collection of quantum disks. 

Then, iteratively, one repeats this operation independently inside each of these quantum disks (so that each of them is then again split into a countable collection of quantum disks). We denote by $(D_j^k)_{j \in J_k}$ the collection of quantum disks that one obtains after the $k$-th iteration. 

Our previous results then make it possible for each of these quantum disks to almost surely define its ``CLE carpet quantum mass'' that we will denote by $\mu (D_j^k)$, which is the limit as $n \to \infty $ of the suitably renormalized number of CLE loops with boundary length in $[ \eps_n, 2 \eps_n ]$ that are contained in $D_j^k$ (for a well-chosen fixed sequence $\eps_n$ that tends to $0$). 

Each of our CPI exploration steps corresponds to an exploration (i.e., a ``stopping line'') on the growth-fragmentation tree ${\mathcal T}$. It follows that in fact almost surely $\sum_{j \in J_1} \mu (D_j^1) = \mu (D)$ (because the corresponding fact holds for the measure on the boundary of the tree), and therefore iteratively, that $\sum_{j \in J_k} \mu (D_j^k) = \mu (D)$ for all $k \ge 1$.

Next, we consider a deterministic countable dense collection of pairs $(a_i, b_i)_{i \in I}$ of points on $\partial D$, and denote by  $l_i$ the ``geodesic'' circle-arc joining the points $a_i$ and  $b_i$ in $D$. Then: 

\begin{lemma} \label {premeasure}
For any given $l_i$, the sum $S_i^k$ of all $\mu (D_j^k)$ over all $D_j^k$ that intersect $l_i$ almost surely tends to $0$ as $k \to \infty$.
\end{lemma}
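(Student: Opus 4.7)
The plan is to first show that $(S_i^k)_{k \ge 0}$ is a non-increasing sequence of non-negative random variables, then to identify its a.s.\ limit as the $\mu$-mass of a subset of $l_i$, and finally to conclude via a rotational averaging argument that $\mu(l_i) = 0$ a.s.

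Since $\{D_j^{k+1}\}_{j \in J_{k+1}}$ refines $\{D_j^k\}_{j \in J_k}$, and since it has been established in this section that $\sum_{j' : D_{j'}^{k+1} \subseteq D_j^k} \mu(D_{j'}^{k+1}) = \mu(D_j^k)$ for every $j \in J_k$, one observes that if $D_j^k \cap l_i = \emptyset$ then every sub-disk $D_{j'}^{k+1} \subset D_j^k$ also misses $l_i$. Hence the sub-disks contributing to $S_i^{k+1}$ are all contained in sub-disks contributing to $S_i^k$, which yields $S_i^{k+1} \le S_i^k$. The sequence therefore converges a.s.\ to some limit $S_i^\infty \ge 0$, and the aim reduces to showing $S_i^\infty = 0$ a.s.

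Set $U_k := \bigcup_{j : D_j^k \cap l_i \neq \emptyset} D_j^k$, so that $\mu(U_k) = S_i^k$ and $(U_k)$ is a decreasing sequence of open sets. For any $z$ in the $\CLE_\kappa$ carpet not lying on $l_i$, the Euclidean diameter of the sub-disk $D_j^k$ containing $z$ should tend to $0$ as $k \to \infty$ (the iterated CPI procedure refines the partition down to individual carpet points), so eventually that sub-disk is disjoint from $l_i$ and $z \notin U_k$. Consequently $A := \bigcap_k U_k$ is contained in $l_i$, and continuity of the finite measure $\mu$ gives $S_i^\infty = \mu(A) \le \mu(l_i)$. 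It therefore remains to show $\mu(l_i) = 0$ a.s.

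Since the embedding maps three boundary-length-typical points to three uniformly distributed points on $\partial D$, the joint law of the CLE, the LQG, and hence of $\mu$, is invariant under rotations of $D$. For $\theta \in [0, 2\pi)$, let $l_i^\theta$ denote the image of $l_i$ under rotation by $\theta$ about the origin. For each $z \in D \setminus \{0\}$, the set $\{\theta : z \in l_i^\theta\}$ is finite (the rotational orbit of $z$ is a circle, which meets the smooth arc $l_i$ in only finitely many points) and thus has Lebesgue measure zero. By Fubini,
\[ \int_0^{2\pi} \mu(l_i^\theta) \, d\theta \;=\; \int_D \bigl| \{ \theta \in [0, 2\pi) : z \in l_i^\theta \} \bigr| \, d\mu(z) \;=\; 0 \quad \text{a.s.} \]
Taking expectations and using the rotational invariance $\E[\mu(l_i^\theta)] = \E[\mu(l_i)]$ yields $\E[\mu(l_i)] = 0$, whence $\mu(l_i) = 0$ a.s. The main potential obstacle is the diameter-shrinking step used to ensure $A \subseteq l_i$; this should follow from the iterated-CPI refinement of the carpet but requires some care to formalize rigorously.
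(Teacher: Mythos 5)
The main difficulty with your argument is circularity: you invoke $\mu$ as a Borel measure on $D$ — in the step ``continuity of the finite measure $\mu$ gives $S_i^\infty = \mu(A) \le \mu(l_i)$'' and in the Fubini integral $\int_D |\{\theta : z \in l_i^\theta\}|\, d\mu(z)$ — but at this stage of the paper $\mu$ is \emph{only} defined on the countable collection of domains $D_j^k$ (and on $D$ itself), together with the additivity relations $\sum_{j\in J_k}\mu(D_j^k)=\mu(D)$. Lemma~\ref{premeasure} is precisely what is needed to obtain finite additivity of $\wt\mu$ on the ring $\mathcal{U}$ so that Carath\'eodory's extension theorem can later produce the Borel measure $\mathcal{Y}$; you cannot assume in its proof that a (countably additive, rotation-integrable) measure $\mu$ already exists. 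Neither ``continuity from above'' nor the Fubini identity has meaning here without that.

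The paper's own proof avoids this by working entirely with the quantities that are already defined. It argues by contradiction: if $S_{i_0}^k$ does not tend to $0$, then $\p[S_{i_0}>a]>a$ for some $a>0$, and by the random rotational embedding and the density of the family $(a_i,b_i)$ one can pick $K$ pairwise disjoint geodesics near $l_{i_0}$, each with $\p[S_{i_m}>a]\ge a/2$. Because $\max_j \mathrm{diam}(D_j^k)\to 0$, for large $k$ no $D_j^k$ meets two of these $K$ lines with probability $\ge a/4$; on that event the sums $S_{i_1}^k,\dots,S_{i_K}^k$ are over disjoint index sets, so $\sum_m S_{i_m}^k\le\sum_{j\in J_k}\mu(D_j^k)=\mu(D)$. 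Taking expectations gives $\E[\mu(D)]\ge K a^2/4$, a contradiction since $K$ is arbitrary and $\E[\mu(D)]=\E[Y]<\infty$. Note that the diameter-shrinking fact you flagged as potentially delicate is used by the paper as well, so that is a shared ingredient rather than a defect specific to your approach; the genuine gap is the premature use of $\mu$ as a measure.
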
 
\begin{proof}
Suppose that for some $i_0$ and some arbitrarily small but fixed $a$, the limit $S_{i_0}$ of the  decreasing sequence $S_{i_0}^k$ has a probability greater than $a$ to be greater than $a$. Then, for any large $K$, by our randomized choice of embedding and the density of the pairs of points $(a_i, b_i)$, one can find $K$ disjoint lines $l_{i_1}, \ldots, l_{i_K}$ very near $l_{i_0}$ such that for each of them $\p[ S_i  > a] \ge a/2$. On the other hand, for this given family of $K$ lines, when $k$ is chosen large enough, we know that with probability at least $a/4$, none of the $D_j^k$ will intersect more than one of these $K$ lines (just because the maximum over $j$ of the diameter of the $D_j^k$ goes to $0$ as $k \to \infty$) -- we call this event $E_k$. From there, we can readily conclude that for large enough $k$, 
\[ \E [ \mu (D) ] \ge \E [ \mu (D) 1_{E_k}] 
\ge \sum_{m \le K} a \p [ S_{i_m} > a,\ E_k] 
\ge Ka \times (a/2 - a/4) \ge Ka^2 /4. \]
This leads to a contradiction (when $K$ is chosen to be large enough) since the expected value of $\mu(D)=Y$ is almost surely finite. 
\end {proof} 

Let us finally outline how to now prove Theorem \ref {thmmeasure}:  
\begin {itemize} 
 \item The collection ${\mathcal U}$ of all finite unions $U$ of finite intersections of (closed or open) hyperbolic half-spaces that have one of the $l_i$'s as part of their boundaries forms a ring of sets that generates the family of Borel sets in $D$.
 \item 
For each $U \in {\mathcal U}$, we now define for each $k$,  $\mu_k (U)$ to be the sum of the $\mu (D_j^k)$ for $D_j^k \subset U$. This is then clearly a non-decreasing function of $k$, and we then define $\wt \mu(U) = \lim_{k \to \infty} \mu_k (U)$. 
For each $U \in {\mathcal U}$, we know that 
$$ \mu (D) - \mu_k (U) - \mu_k (D \setminus U)
=  \sum_{j \in J_k} \mu (D_j^k)- \mu_k (U) - \mu_k (D \setminus U)
= S_{\partial U}^k $$ 
where $S_{\partial U}^k$ denotes the sum over all $\mu (D_j^k)$ for $D_j^k \cap \partial U \not= \emptyset$, and this quantity tends to $0$ as $k \to \infty$ by Lemma \ref {premeasure}, so that 
$\wt \mu (U) + \wt \mu (D \setminus U) = \mu (D)$. 
Similarly, the same argument ensures that finite additivity holds for $\wt \mu$ on the ring ${\mathcal U}$. 

 \item The definition of $\mu$ shows readily that for all $U$ in ${\mathcal U}$, almost surely, for every $k$,   
 $$ \mu_k (U) \le \liminf_{ n\to \infty} \eps_n^{\alpha + 1/2} N_{[\eps_n, 2 \eps_n]} (U), $$
 where $N_{[\eps, 2\eps]}(U)$ denote the number of CLE loops with quantum length in $[\eps, 2 \eps]$ that are contained in $U$. The same inequality therefore holds for the limit $\wt \mu (U)$.  
 On the other hand, we know that
 $$ \wt \mu (U) + \wt \mu (D \setminus U) = \mu (D) = \lim_{n \to \infty}\eps_n^{\alpha + 1/2} N_{[\eps_n, 2 \eps_n]} (D), $$
 from which it follows readily that almost surely, 
  $$ \wt \mu (U) = \lim_{ n\to \infty } \eps_n^{\alpha + 1/2} N_{[\eps_n, 2 \eps_n]} (U).$$ 
  This in particular ensures that $\wt \mu$ (and therefore also the measure that will be obtained via the extension theorem) does not depend on the additional CPI randomness used in our iteration scheme. 
  
  \item To construct the measure ${\mathcal Y}$ that uniquely extends $\wt \mu$ via Carath\'eodory's extension theorem, it remains to check the countable additivity property, namely that if $U \in {\mathcal U}$ is the increasing limit of sets $U_m$ in ${\mathcal U}$, then $\wt \mu (U) = \lim_{m \to \infty} \wt \mu (U_m)$. This can be easily checked to be a consequence of the fact that for all $i$, the sum of all $\mu (D_j^k)$ over all $D_j^k$ that are contained in the $\delta$-neighborhood of $l_i$ does tend to $0$ as $\delta \to 0$, uniformly over $k$, which can be proved using the same ideas as for Lemma \ref {premeasure} (we safely leave this to the reader). 
  
  \item Finally, we can check that the obtained measure does also not depend on the concrete choice of the family $(a_i, b_i)_{i \in I}$; indeed, if one considers two different families, one can simply apply the previous argument to the union of these two families.  This can then also be used to see that the obtained measure ${\mathcal Y}$ is in fact independent of the embedding (meaning that the measure for one embedding is the conformal pushforward of the measure for another embedding -- so that in some sense, the randomness that comes from our embedding procedure does not influence the construction of the measure).
\end {itemize}

\begin {remark} 
Instead of using this ``randomized'' embedding, one could have used absolute continuity properties of the GFF, which would have let to slightly stronger statements of the following type: For any given embedding and any given $U \in {\mathcal U}$, ${\mathcal Y}(U)$ is the limit in probability of $\eps^{\alpha + 1/2} N_{[\eps, 2 \eps]} (U)$. 
It would be also possible to show stronger statements: For instance, that the convergence holds for any open set $U$ (such that the Lebesgue measure of $\partial U$ is $0$), but this is not needed here. 
\end {remark}

\appendix

\section{Proof of Lemmas \ref{qdright} and \ref{qdrighttwo}}
\label{proofoflemma} 

Let us start with the following general statement, that will be one key to the proof of \jason{Lemmas~\ref{qdright} and~\ref{qdrighttwo}}.

\begin{lemma}
\label{lem:disk_limit}
Fix $W \in (0,\gamma^2-2)$.  Suppose that $\CB$ is a bead of a quantum wedge of weight $W$ with left and right boundary length conditioned to be equal to $1$ and $\epsilon$ respectively.  Then in the limit as $\epsilon \to 0$, $\CB$ converges to a unit boundary length quantum disk.
\end{lemma}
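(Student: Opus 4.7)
The plan is to reduce to the case $W = \gamma^2 - 2$ and then invoke the identification of the weight $\gamma^2 - 2$ bead with a quantum disk. Recall from Section~2.1 that a bead of a thin quantum wedge of weight $W$ is encoded, modulo an independent transverse GFF component whose law does not depend on $W$, by an excursion of a Bessel process of dimension $\delta = 1 + 2W/\gamma^2$; the quantum disk corresponds to $\delta_0 = 3 - 4/\gamma^2 > \delta$. In the strip embedding $\strip = \R \times (0,\pi)$ with the two marked points of the bead at $\pm\infty$, the log of this Bessel excursion controls the horizontal averages of the field, and the left/right boundary lengths are the integrals of $e^{\gamma \wt{u}/2}$ over the two halves of the excursion.

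First, I would handle $W = \gamma^2 - 2$. By \cite[Proposition~A.8]{dms2014mating}, the bead of weight $\gamma^2 - 2$ with total boundary length $1 + \epsilon$ is a quantum disk of boundary length $1 + \epsilon$ with its two marked points chosen from the boundary length measure. Conditioning on the right arc having length $\epsilon$ just fixes the relative location of the second marked point given the first. As $\epsilon \to 0$, the two marked points collapse to a single quantum-typical boundary point, and the underlying (unmarked) surface converges to a unit boundary length quantum disk.

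For general $W < \gamma^2 - 2$, the plan is to compare the conditional law of the Bessel excursion of dimension $\delta$ with that of dimension $\delta_0$, both conditioned on the same pair of half-integrals. The Radon-Nikodym derivative between Bessel excursion laws of different dimensions can be written via the standard change of measure for Bessel processes (see, e.g., \cite[Chapter~XI]{ry1999martingales}) and is captured by the behavior of the excursion near its endpoints. I would shift the strip so that the right marked point sits at $+\infty$ and a reference point on the left (say, the location with accumulated right-length $\epsilon$) is at $0$; under this embedding, the ``boundary length $1$'' portion of the bead lives in a half-strip $\{r < 0\}$, while the ``boundary length $\epsilon$'' portion lies in $\{r > 0\}$ and, by the scaling relations between quantum area and boundary length, has diameter and mass tending to zero. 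A coupling of the two conditional laws restricted to any fixed window $\{r < r_0\}$ should then show that the Bessel-dimension difference is washed out in the limit, since the only $W$-dependent factor concentrates on the vanishing right tail.

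The main obstacle is controlling the Radon-Nikodym derivative between the two Bessel excursion laws rigorously under the joint boundary-length conditioning. One route is an explicit computation: decompose the excursion at its maximum via Williams's decomposition and use the absolute continuity of Bessel bridges of dimensions $\delta$ and $\delta_0$ on compact intervals bounded away from the endpoints, then combine with the fact that the entrance/exit laws of the two dimensions differ only at the microscopic scale set by $\epsilon$. A cleaner alternative, which I would attempt first, is to bypass the Bessel comparison entirely by producing both the weight $W$ and the weight $\gamma^2 - 2$ beads simultaneously from a common construction based on the welding operations in Theorem~\ref{thm:wedge_welding}: expressing the weight $W$ bead as arising inside a larger welded surface whose $\epsilon \to 0$ limit is canonically the quantum disk would allow us to transfer convergence directly, without an explicit Bessel bridge analysis.
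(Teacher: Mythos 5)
Your ``plan B'' --- realizing the weight-$W$ bead via welding/cutting operations inside an ambient quantum disk --- is the right instinct and is essentially what the paper does: draw an independent $\SLE_\kappa(W-2;\kappa-4-W)$ from $-i$ to a boundary point $x$ at quantum distance $\epsilon$ along the counterclockwise arc of a unit boundary quantum disk, so that by Theorem~\ref{thm:wedge_welding}\,(ii) the large complementary component to the left is a bead of a weight-$W$ wedge, and this bead visibly tends to the whole disk as $\epsilon\to0$.  But you leave unaddressed the step that makes the argument actually work: the bead so produced has \emph{random} left and right boundary lengths $(L,R)$, not exactly $(1,\epsilon)$, and the lemma is a statement about the bead conditioned on a prescribed pair.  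One must therefore show that conditioning on $(L,R)$ does not shift the law in the limit.  The paper handles this by (i) zooming in at $-i$ to a quantum half-plane to show that $\epsilon^{-1}(1-L,R)$ converges in total variation to an $\epsilon$-independent pair $(X,Y)$, and (ii) a deconditioning step: on the high-probability event that the cutting curve stays in $B(-i,\delta)$, the pair $(L,R)$ is $\sigma(h|_{B(-i,\delta)},\eta)$-measurable, and the conditional law of $h$ given $h|_{B(-i,\delta)}$ converges to the unconditioned law as $\delta\to0$ (backward martingale convergence plus triviality of $\cap_{\delta>0}\sigma(h|_{B(-i,\delta)})$).  Without an analogue of (ii), ``transfer convergence directly'' is a genuine gap, not a detail.

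Your primary (Bessel) route has a more basic difficulty.  The Radon--Nikodym derivative between Bessel laws of dimensions $\delta$ and $\delta_0$ on a window bounded away from zero is of the form $(X_T/X_0)^{(\delta_0-\delta)/2}\exp\bigl(-c\int_0^T X_s^{-2}\,ds\bigr)$, so it is \emph{not} concentrated at the endpoints of the excursion --- it carries a global integral term along the whole path.  You then further want to condition both excursion laws on a pair of path functionals (the two half-integrals of $e^{\gamma\wt{u}/2}$), and it is far from clear that this conditioning preserves a tractable RN structure or localizes the $W$-dependence to the short ($\epsilon$) side.  Your base case $W=\gamma^2-2$ via \cite[Proposition~A.8]{dms2014mating} is fine and is implicitly the starting input of the paper's argument too (the ambient surface one cuts is a quantum disk, i.e.\ a weight $\gamma^2-2$ bead), but the Bessel comparison you sketch for general $W$ would require substantially more work than you indicate, and the ``washed out in the limit'' claim is not justified as written.
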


In this lemma (and throughout this appendix), we are working with probability measures on beads of thin wedges conditioned on their left and right boundary lengths (see Remark \ref {beadboundaries}).  
Throughout this appendix, we say that a bead of a wedge has boundary length $(l,r)$ if the clockwise (resp.\ counterclockwise) boundary arcs of the \jason{bead} have respective lengths~$l$ and~$r$.

The sense of convergence that we consider in this lemma is the following.  Let $z \in \CB$ be chosen from the quantum measure and let $h$ be the field obtained by embedding $\CB$ into $\D$ using a conformal transformation which takes $z$ to $0$ and post-composed with a rotation by a uniformly random angle.  Then for any finite collection $\phi_1,\ldots,\phi_n$ of $C_0^\infty(\D)$ functions we have that the joint law of $(h,\phi_j)$ for $1 \leq j \leq n$ converges as $\epsilon \to 0$ to that when $h$ is the field which describes a unit boundary length quantum disk with the same embedding (uniformly random interior point taken to the origin post-composed with a rotation with a uniformly random angle).

\begin{proof}[\jason{Proof of Lemma~\ref{lem:disk_limit}}]
\jason{We are going to prove the result by realizing a bead of a quantum wedge of weight $W \in (0,\gamma^2 -2)$ inside of an ambient quantum disk with boundary length $1$ by cutting the latter with a segment of an $\SLE$ curve which connects $-i$ to a point on the counterclockwise arc of $\partial \D$ which is close to $-i$.  The idea is to argue that as this point tends to $-i$, the ambient quantum disk and the bead converges to it.  Some subtleties will arise because the resulting bead has random left and right boundary lengths.}

\jason{We will now make the above sketch more precise.}  Suppose that $\CD = (\D,h,-i,i)$ is a quantum disk with boundary length $1$.  Let $x$ be the point on the unit circle so that the counterclockwise arc connecting $-i$ and $x$ has quantum length $\epsilon$, so that $(\D, h, -i, x)$ is a bead of a $\gamma^2 - 2$ wedge with boundary lengths $(1-\eps, \eps)$.   Let $\eta_x$ be an independent $\SLE_\kappa(W-2 ; \kappa-4-W)$ process in $\CD$ from $-i$ to $x$.  We know from Theorem~\ref{thm:wedge_welding}-(ii) applied to the thin wedge of weight $\gamma^2- 2$ (the beads of which are \jason{quantum} disks) that the quantum surfaces parameterized by the components of $\D \setminus \eta$ which are to the left of $\eta$ are beads of a wedge of weight $W$ -- in particular, this will hold for the component $\CB$ with the largest boundary length.  

The goal is now to study what happens if we let $\epsilon$ tend to $0$. 
Conditionally on the boundary lengths $(L,R)$ of its two sides, $\CB$ has the law of a bead of a wedge of weight $W$ (with those boundary lengths).  Moreover, we can note that (simply because $\eta$ gets smaller and smaller in $\D$ in this setup), $\CB$ converges to $\CD$ itself as $\epsilon \to 0$ \jason{(in the sense described just above)}, i.e., it becomes a quantum disk with unit boundary length. So, we have the convergence of some bead of a quantum wedge of weight $W$ to a quantum disk as in the \jason{statement of the lemma}. However, the boundary lengths \jason{$(L,R)$} of \jason{$\CB$} are here random (but when $\eps$ is small, then $R$ is small and $L$ is close to $1$), so some little work is needed to deduce the lemma itself. 

Let us first provide more information about the law of $(L,R)$.  \jason{In particular, we aim to show that $\epsilon^{-1}(1-L,R)$ converges to a limit as $\epsilon \to 0$.}  Let $\varphi \colon \D \to \h$ be the unique conformal map which sends $-i$ to $0$, $x$ to $1$, and $i$ to $\infty$. 
We know from the definitions of a quantum disk and quantum half-plane that \jason{the pair consisting of}
\[ h \circ \varphi^{-1} + Q \log|(\varphi^{-1})'| + \frac{2}{\gamma} \log \frac{1}{\epsilon}\]
\jason{and $\varphi(\eta)$} converges as $\epsilon \to 0$ to the law of a quantum half-plane (embedded so that the boundary length of $[0,1]$ is $1$), in the strong sense of convergence \jason{which gives} that the restriction of the \jason{field/path pair} to any compact set converges in total variation).  Let $X$ be the quantum length of the part of the boundary of the unbounded component of $\h \setminus \varphi(\eta)$ which is on $\varphi (\eta)$ and let $Y$ be the quantum length of the part of $\partial \h$ which is cut off from $\infty$ by \jason{$\varphi(\eta)$}.  It follows from the above convergence that the law of $\epsilon^{-1}(1-L,R)$ converges in total variation to the law of $(X,Y)$ as $\eps \to 0$.  
Hence, if we combine this with the conclusion of the previous paragraph we get that if we consider a bead of a wedge with weight $W$ and respective boundary lengths $(1- \eps X, \eps Y)$, then as $\eps \to 0$, it converges (in distribution) to a quantum disk with unit boundary length. By simple scaling, the same will 
hold true if we take the boundary lengths to be $(1, \eps Y/(1-\eps X))$.

\jason{Finally, we note that for each $\delta > 0$ we have that the probability that $\eta$ is contained in $B(-i,\delta)$ tends to $1$ as $\epsilon \to 0$.  On the event that $\eta$ is contained in $B(-i,\delta)$, we have that $(L,R)$ is determined by the values of $h$ in $B(-i,\delta)$ and $\eta$.  On the other hand, the conditional law of $h$ given its values in $B(-i,\delta)$ converges to its unconditioned law as $\delta \to 0$ (by the backward martingale convergence theorem and the triviality of $\cap_{\delta > 0} \sigma(h|_{B(-i,\delta)})$).  The above facts altogether imply that both $\CB$ converges to $\CD$ as $\epsilon \to 0$ and, moreover, that the conditional law of $\CB$ given $(L,R)$ is close to its unconditioned law.  This implies that it is possible to condition on the values of $X$ and $Y$ above, from which the result follows.} 
\end{proof}

We are now finally ready to complete our proofs: 

\begin {proof}[Proof of Lemma~\ref{qdright} and of Lemma~\ref{qdrighttwo}]
We will write down the proof of Lemma \ref {qdright} -- exactly the same arguments work for Lemma \ref {qdrighttwo}, one just has to replace $3\kappa/2-6$ with $\rho$ and $-\kappa/2$ with $\kappa-6-\rho$.	

We will focus on completing the proof of the first part of Lemma~\ref{qdright} since the proof of the second part follows from the same argument.

We start by repeating some of the arguments of the proof of Lemma~\ref{lem:disk_limit} with a doubly marked quantum disk $\CD = (\D,h,-i,i)$, and we let $\eta$ be an independent $\SLE_\kappa(3\kappa/2-6;-\kappa/2)$ process in $\D$ from $-i$ to $i$.  As a quantum disk is a bead of a quantum wedge of weight $\gamma^2-2$, it follows from Theorem~\ref{thm:wedge_welding}-(ii) that the quantum surfaces parameterized by the components of $\D \setminus \eta$ which are to the left (resp.\ right) of $\eta$, when conditioned on their left and right boundary lengths, are beads of a quantum wedge of weight $3\gamma^2/2-4$ (resp.\ $2-\gamma^2/2$).  In particular, the beads to the left of $\eta$ are exactly of the type considered in the first  part of Lemma~\ref{qdright}.

\begin{figure}[ht!]
\includegraphics[scale=.50]{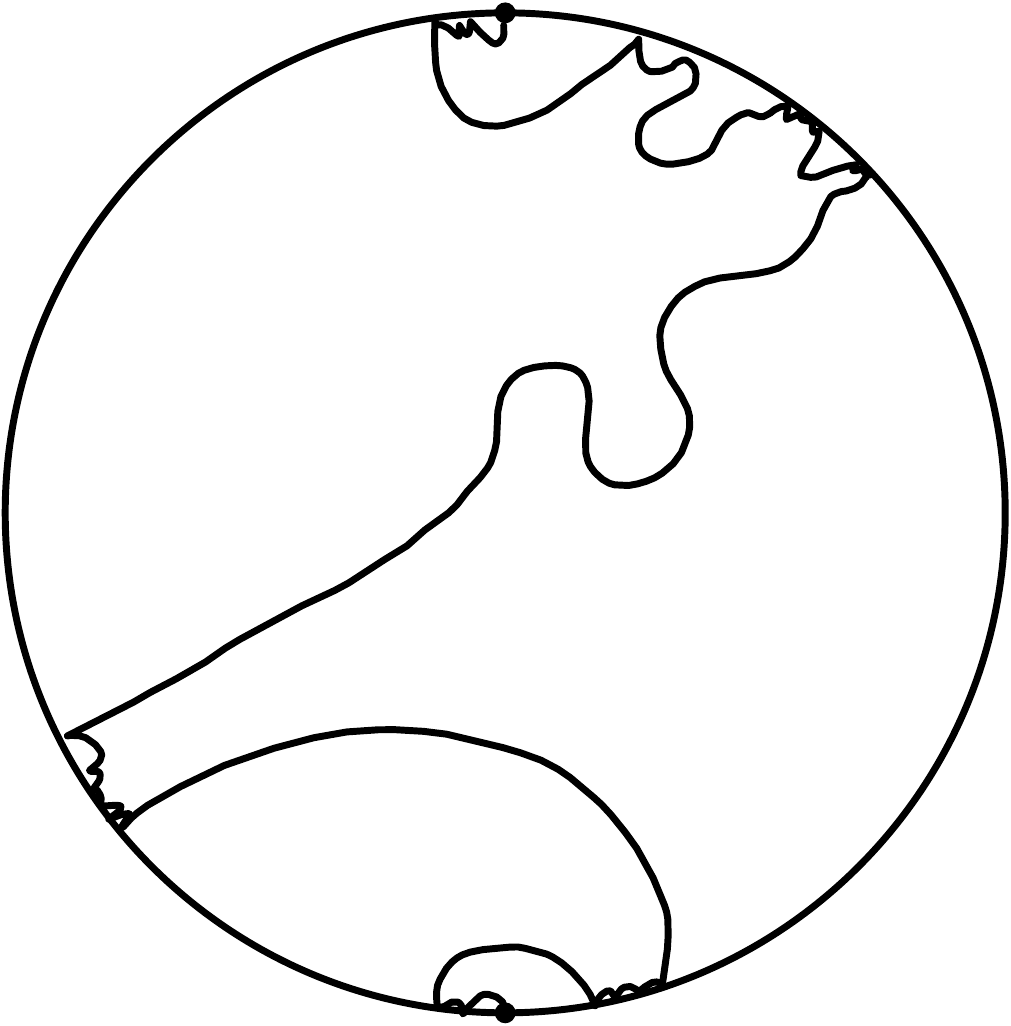} 
\quad
\includegraphics[scale=.50]{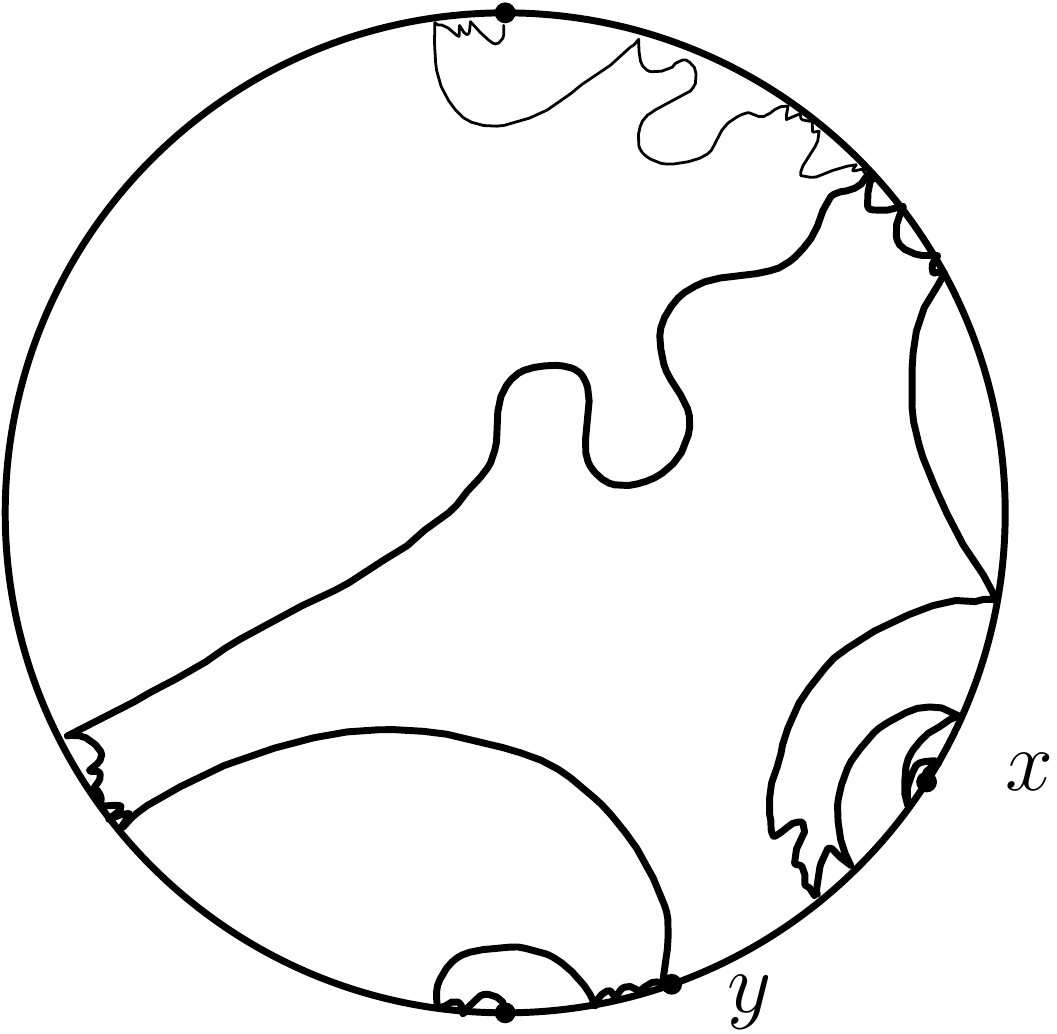} 
	\caption{The curve $\eta$; the coupling with $\eta_x$.}
	\label{F10}
\end{figure}

Let $x \in \partial \D$ be picked from the quantum length measure independently of everything else.  Let $\eta_x$ be an $\SLE_\kappa(3\kappa/2-6;-\kappa/2)$ process in $\D$ from $-i$ to $x$ which is coupled together with $\eta$ so as to agree until the first time that their target points have been separated and then to evolve independently afterwards (we also assume that the pair $(\eta,\eta_x)$ is independent of $\CD$), see Figure~\ref{F10}. Let $\CB_x$ be the bead of $\D \setminus \eta$ with $x$ on its boundary.  The target-independence of these $\SLE_\kappa (\rho; \kappa -6- \rho)$ processes show readily that the part of $\eta_x$ in $\CB_x$, viewed as a process starting from the branching point $z$ and which is targeted at the {\em first} point $y$ on $\partial \CB_x$ visited by $\eta$, is an 
$\SLE_\kappa(3\kappa/2-6)$ process (here and in the following paragraph, we will implicitly use the fact that $x$ is chosen randomly, and has a positive probability to actually be very close to $y$). We can actually couple it with an $\SLE_\kappa (3 \kappa /2 -6)$  that goes all the way to $y$ and that we denote by $\wh \eta$.

On the event that $x$ is on the counterclockwise segment of $\partial \D$ from $-i$ to $i$, the same arguments as above 
show that the beads parameterized by the components of $\D \setminus \eta_x$ which are to the left (resp. right) of $\eta_x$ are beads of a quantum wedge of weight $3\gamma^2/2-4$
(resp. $2 - \gamma^2 /2$).  
Putting all these items together, one gets that the following is true: Suppose that $\wt{\CB}$ is a bead of a wedge of weight $2-\gamma^2/2$  with prescribed left and right boundary lengths (so that $\wt {\CB}$ has the same weight as $\CB_x$ -- we can note that the way in which $\CB_x$ was selected induces some bias in the law of its left and right boundary lengths, but not on its conditional law given these two boundary lengths). Let $\wt{\eta}$ be an independent $\SLE_\kappa(3\kappa/2-6)$ process between the two marked points of $\wt{\CB}$ (i.e., same law as $\wh \eta$ in $\CB_x$).  Then the components which are to the left of $\wt{\eta}$ are independent given their left and right boundary lengths and (when conditioned on their left and right boundary lengths) are beads of a wedge of weight $3\gamma^2/2-4$. Mind again that this does not provide information about the joint law of the left and right boundary lengths of these components -- but says only that when one conditions on these lengths, then the conditional law of the surfaces are those of beads with prescribed boundary lengths.

\begin{figure}[ht!]
\includegraphics[scale=.50]{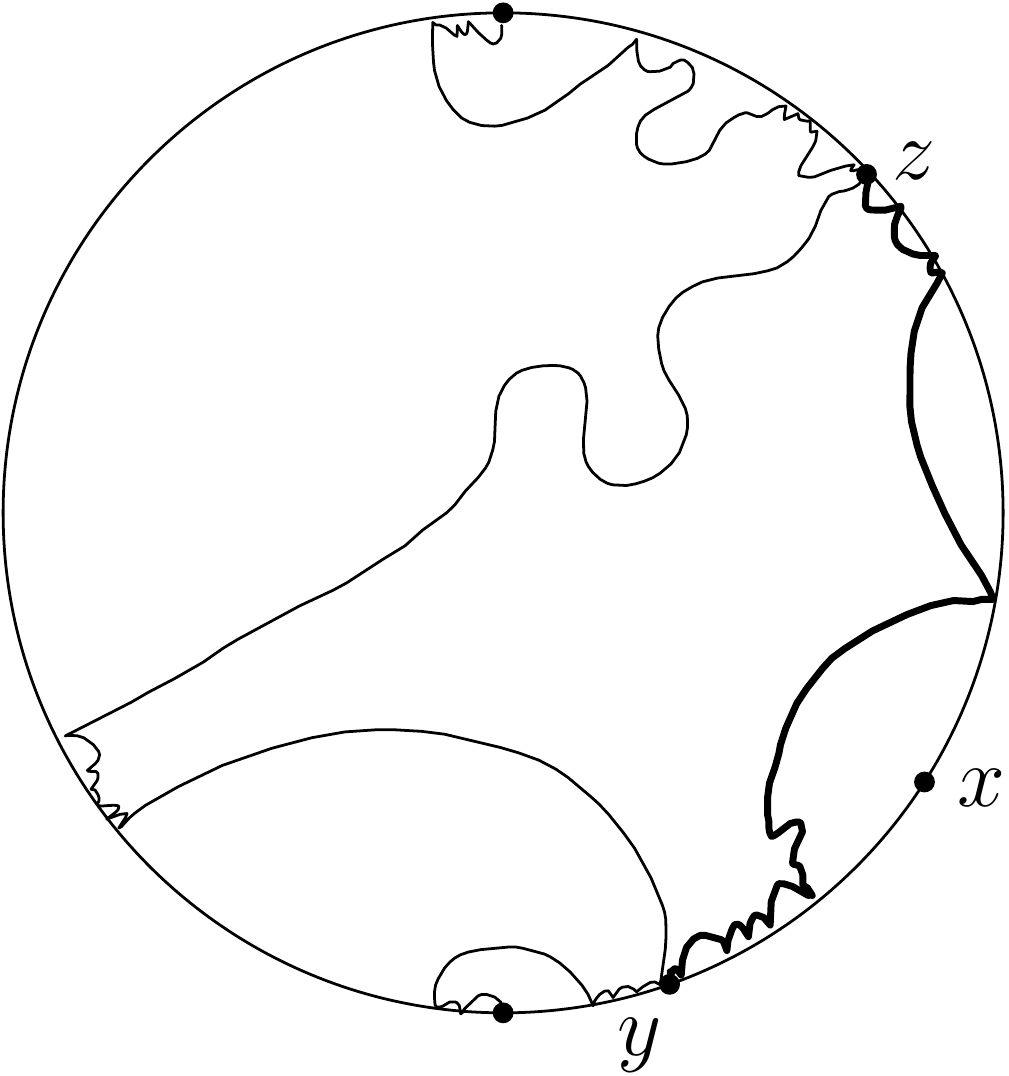}
\quad 
\includegraphics[scale=.50]{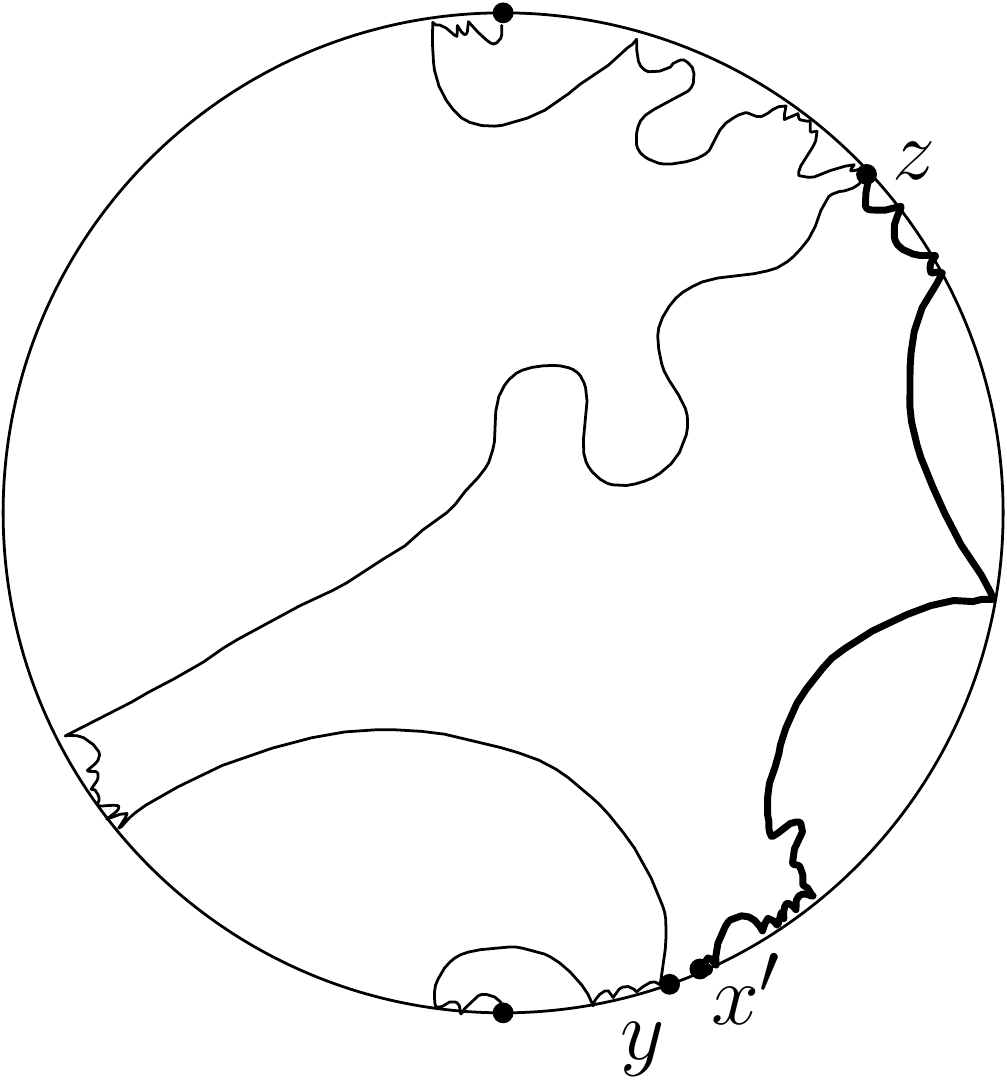}
	\caption{The curve $\wh \eta$ from $z$ to $y$ in the bead $\CB_x$. The curve just before completion can be viewed as part of $\eta_{x'}$ for $x'$ near $y$ and therefore defines a bead of a wedge on its right.}
	\label{F11}
\end{figure}

To complete the proof, we need to show that the surface which is to the right of $\wt{\eta}$ is a quantum disk:
If we stop $\wt{\eta}$ at a time before it reaches its target point and just disconnects a boundary arc to its left (so that  it could actually choose to branch to its left at that point) as in Figure~\ref{F11}, then the same argument shows that the surface which is to its right is a bead of a quantum wedge of weight $2-\gamma^2/2$ (conditionally on its boundary lengths).  But as $\wt{\eta}$ approaches its target point, the left boundary length of this bead tends to $0$ while its right boundary length increases. One can then just apply Lemma~\ref{lem:disk_limit} to conclude. 
\end {proof}

\end{document}